\documentclass[a4paper,10pt]{article}
\usepackage{authblk}

\usepackage{pgfplots}
\usepackage{url}

\usepackage[mathlines]{lineno}

\usepackage[backend=bibtex,sorting=nyt,sortcites]{biblatex}
\addbibresource{needham_bibliography.bib}

\usepackage[all]{xy}
\usepackage[english]{babel}
\usepackage[margin=1in]{geometry}
\usepackage[utf8]{inputenc}
\usepackage{amsmath}
\usepackage{mathtools}
\usepackage{amsthm}
\usepackage{times}
\usepackage{hyperref}
\usepackage{pifont}
\newcommand{\cmark}{\ding{51}}
\newcommand{\xmark}{\ding{55}}
\usepackage{booktabs}  
\usepackage{multirow}

\usepackage{amsfonts}
\usepackage{graphicx}
\usepackage{amssymb}
\usepackage{arydshln}
\usepackage{tikz}
\usepackage{tikz-cd}
\usepackage{bbold}
\usepackage[percent]{overpic}

\newtheorem{theorem}{Theorem}
\newtheorem*{theorem*}{Theorem}
\newtheorem*{proposition*}{Proposition}

\newtheorem{proposition}{Proposition}[section]
\newtheorem{lemma}[proposition]{Lemma}
\newtheorem{remark}[proposition]{Remark}
\newtheorem{conjecture}[proposition]{Conjecture}

\newtheorem{question}[proposition]{Question}
\newtheorem{example}[proposition]{Example}
\newtheorem{Corollary}[proposition]{Corollary}

\newcommand{\mG}{\mathcal{G}}
\newcommand{\mH}{\mathcal{H}}

\newcommand{\mX}{\mathcal{X}}
\newcommand{\mM}{\mathcal{M}}
\newcommand{\mY}{\mathcal{Y}}
\newcommand{\mZ}{\mathcal{Z}}

\newcommand{\mU}{\mathcal{U}}
\newcommand{\mT}{\mathcal{T}}
\newcommand{\mS}{\mathcal{S}}
\newcommand{\mN}{\mathcal{V}}

\newcommand{\mE}{\mathcal{E}}

\newcommand{\R}{\mathbb{R}}

\newcommand{\catC}{\textbf{C}}

\newcommand{\globalInj}{\textbf{Glo}}
\newcommand{\homotopy}{\textbf{Hom}}
\newcommand{\spheres}{\textbf{Sph}}
\newcommand{\local}{\textbf{Loc}}

\newcommand{\dgh}{d_\mathrm{GH}}

\newcommand{\dgw}{d_{\mathrm{GW},p}}

\newcommand{\dgm}{d_{\mathrm{GM},p}}

\definecolor{darkblue}{rgb}{0.1, 0.1, 0.8}
\definecolor{darkred}{rgb}{0.8, 0.0, 0.0}
\definecolor{darkgreen}{rgb}{0.0, 0.8, 0.0}


\usepackage{mathtools}

\newcommand{\Expect}{{\rm I\kern-.3em E}}

\title{Distance Distributions and Inverse Problems for Metric Measure Spaces}

\author[1]{Facundo M\'emoli}
\author[2]{Tom Needham}

\affil[1]{Department of Mathematics and Department of Computer Science and Engineering,
	The Ohio State University\\ 
	\texttt{memoli@math.osu.edu}}
\affil[2]{Department of Mathematics,
	Florida State University\\
	\texttt{tneedham@fsu.edu}}

\pgfplotsset{compat=1.14}

\let\oldequation\equation
\let\oldendequation\endequation

\renewenvironment{equation}
  {\linenomathNonumbers\oldequation}
  {\oldendequation\endlinenomath}
  
\begin{document}
\maketitle
\sloppy

\begin{abstract}

Applications in data science, shape analysis and object classification frequently require comparison of probability distributions defined on different ambient spaces. To accomplish this, one requires a notion of distance on a given class of metric measure spaces---that is, compact metric spaces endowed with probability measures. Such distances are typically defined as comparisons between metric measure space invariants, such as distance distributions (also referred to as shape distributions, distance histograms or shape contexts in the literature). Generally, distances defined in terms of distance distributions are actually pseudometrics, in that they may vanish when comparing nonisomorphic spaces. The goal of this paper is to set up a formal framework for assessing the discrimininative power of distance distributions, i.e., the extent to which these pseudometrics fail to define proper metrics. We formulate several precise inverse problems in terms of these invariants and answer them in several categories of metric measure spaces, including the category of plane curves, where we give a counterexample to the Curve Histogram Conjecture of Brinkman and Olver, the categories of embedded and Riemannian manifolds, where we obtain sphere rigidity results, and the category of metric graphs, where we obtain a local injectivity result along the lines of classical work of Boutin and Kemper on point cloud configurations. The inverse problems are further contextualized by the introduction of a variant of the Gromov-Wasserstein distance on the space of metric measure spaces, which is inspired by the original Monge formulation of optimal transport.
\end{abstract}

\setcounter{tocdepth}{1}
\tableofcontents

\section{Introduction}

Classical optimal transport (OT) problems deal  with finding an optimal way to match two probability measures  $\mu$ and $\nu$ defined on the same ambient metric space $Z$. There are two  historical formulations of OT: the original Monge formulation \cite{monge1781memoire} and the Kantorovich formulation \cite{kantorovich1942translocation}. The former involves minimizing a certain cost over all measure preserving transformations between the measures, whereas the latter introduces a convex relaxation which enlarges the set of admissible mappings to those probability measures on the product space $Z\times Z$ whose marginals coincide with $\mu$ and $\nu$; such a measure is called a \emph{coupling} of $\mu$ and $\nu$. Optimal transport has long been an active field of study in pure mathematics (standard references are  \cite{villani2008optimal,santambrogio2015optimal}) and has recently become popular for machine learning applications, as it is a natural way to compare data distributions \cite{kolouri2017optimal,peyre2019computational}.

The convex relaxation corresponding to the Kantorovich formulation of OT has been adapted to the setting when one wishes to compare not just two probability measures defined on the same ambient space, but to the more general case when one wishes to compare triples of the form $\mathcal{X}=(X,d_X,\mu_X)$ where $(X,d_X)$ is a compact metric space and $\mu_X$ is a Borel probability measure on $X$. These triples are called \emph{metric measure spaces} (mm-spaces for short), and the resulting notion of dissimilarity between pairs of such triples is referred to as \emph{Gromov-Wasserstein (GW) distance} \cite{memoli2011gromov} (also referred to as \emph{distortion distance} in \cite{sturm2012space}). Gromov-Wasserstein distance has gained recent popularity in the machine learning community as a way to compare datasets which do not live in the same ambient space \cite{peyre2016gromov,hendrikson2016using,ezuz2017gwcnn,alvarez2018gromov,xu2019gromov,xu2019scalable,xu2020learning,bunne2019learning}.

We use the notation $\mM^m$ for the space of isomorphism classes of mm-spaces (an \emph{isomorphism} of mm-spaces being a measure-preserving isometry). The Gromov-Wasserstein distance defines a metric on $\mM^m$; however, the distance is not tractable to compute exactly and one therefore relies on pseudometric estimates of GW distance given by comparing certain distributional invariants of mm-spaces. These invariants, defined in terms of \emph{volume growth} in a mm-space, are useful in their own right, as they can be used to vectorize mm-space data in a principled manner. Informally, the goal of this paper is to characterize how badly these pseudometrics fail to define proper metrics. From a more mathematical perspective, we address the inverse problem of characterizing the extent to which volume growth data determines a space.

To make the goals of the paper more precise, define the \emph{local shape measure} of an mm-space $\mathcal{X}=(X,d_X,\mu_X)$ to be the function $dh_\mathcal{X}$ from $X$ into the set $\mathcal{P}(\mathbb{R})$ of probability measures on $\R$ defined by $x \mapsto \big(d_X(x,\cdot)\big)_\# \mu_X$, with $\big(d_X(x,\cdot)\big)_\# \mu_X$ denoting the \emph{pushforward} of $\mu_X$ by the function $d_X(x,\cdot): X \rightarrow \R$. The \emph{global shape measure} $dH_\mathcal{X}$ of $\mathcal{X}$ is the average of the local shape measure $dh_\mathcal{X}$; that is, for every measurable $A\subset \mathbb{R}$,  
\begin{equation}\label{eqn:global_shape_measure}
dH_\mathcal{X}(A) := \int_{X} dh_\mathcal{X}(x)(A)\,\mu_X(dx).
\end{equation}
One can then pose any number of problems on the discriminatory power of these invariants in  classes of mm-spaces; e.g.:
\begin{enumerate}
    \item \label{inverse_problems1} For plane curves $X$ and $Y$, considered as mm-spaces with extrinsic Euclidean distance and normalized arclength measure, does $dH_\mathcal{X} = dH_\mathcal{Y}$ imply $X$ and $Y$ differ by a rigid motion? 
    \item \label{inverse_problems_spheres} If $X$ is a Riemannian manifold, endowed with geodesic distance and normalized Riemannian volume, and $dH_\mathcal{X}$ is the same as the global shape measure of a unit sphere, must $X$ be isometric to a sphere?
    \item \label{inverse_problems2} For compact metric graphs $X$ and $Y$ with uniform measures, does the existence of a map $\phi:X \rightarrow Y$ such that $dh_\mathcal{X} = dh_\mathcal{Y} \circ \phi$ imply $X$ and $Y$ are isomorphic?
\end{enumerate}
Questions regarding the use of volume-growth information to characterize a space have been studied frequently in the Riemannian setting \cite{gray1979riemannian,calvaruso1997special,calvaruso1998semi,bokan2003geometric,csikos2012characterization,sturm2012space,balch2020distributions, balch2020expected}, where infinitesimal volume growth is related to scalar curvature, as well as for certain mm-spaces embedded in Euclidean space and endowed with extrinsic distance \cite{boutin2004reconstructing,brinkman2012invariant}. Inverse problems with a similar flavor have also been studied recently in the topological data analysis literature, where the goal is to characterize rougher spaces such as metric graphs via families of topological signatures \cite{oudot2017barcode,curry2018many,oudot2018inverse,fasy2019persistence, belton2020reconstructing}. In this paper we bridge the gap between these perspectives and establish an overarching framework for studying the recovery of general metric measure spaces from volume growth data. 

We provide a counterexample to Problem \ref{inverse_problems1}, thereby resolving the Curve Histogram Conjecture of Brinkman and Olver \cite{brinkman2012invariant}. Problem \ref{inverse_problems_spheres} can be viewed as a problem on \emph{sphere rigidity}, in the tradition of similar problems in differential geometry. We answer this question, and a similar question for embedded manifolds with \emph{extrinsic} distance,  under additional curvature assumptions. Problem \ref{inverse_problems2} is reminiscent of the classical Graph Reconstruction Conjecture of Kelly \cite{kelly1957congruence} and Ulam \cite{ulam1960collection}, and is also of applied interest as functions similar to $dh_\mathcal{X}$ have recently been incorporated into algorithms for graph classification and analysis \cite{tsitsulin2018netlsd,sato2020fast}. The answer to this problem is more subtle and depends on assumptions about the  regularity of the map $\phi$ and the topologies of the graphs. The following subsection provides more details on the main results of the paper.

At a high level, one of the main goals of this paper is to formalize the study of inverse problems in various categories of mm-spaces; e.g., Riemannian manifolds, embedded manifolds, and metric graphs equipped with measure-preserving morphisms of various prescribed regularities. To this end, we formulate precisely several specific inverse problems of this type in Section \ref{sec:distance_distributions_section}. Our inverse problems are resolved for several categories of mm-spaces in Section \ref{sec:plane_curves}, which covers smooth manifolds, and Section \ref{sec:metric_trees}, which covers metric graphs. Although these sections treat similar problems, the techniques used are fairly distinct:

\begin{itemize}
    \item Section \ref{sec:plane_curves}, treating manifolds,  relies on applying a diverse collection of previous results, including geometric constructions that have appeared as counterexamples to related but different conjectures \cite{mallows1970linear,garcia2016pairs}, Taylor expansions of volume growth functions \cite{gray1979riemannian,karp1989volume} and sphere rigidity results \cite{cheng,kokkendorff2008characterizing}. This section demonstrates that these previous works can be viewed through the common lens of inverse problems for volume growth invariants.
    \item Section \ref{sec:metric_trees}, treating metric graphs, requires more technical work and generally characterizes larger scale behavior of the volume growth invariants. Our techniques use ideas from combinatorial graph theory and computational topology.
\end{itemize}
We now provide an overview of our main results in these settings.

\subsection{Overview of Main Results}\label{sec:overview}

\paragraph{Section \ref{sec:distance_distributions_section}.} This section outlines the basic notation, concepts and main problems considered in the rest of the paper. Here we introduce the notion of subcategories of mm-spaces. A \emph{subcategory of mm-spaces} $\catC$ is a category whose object set $\mathrm{Obj}_\catC$ is a collection of isomorphism classes of mm-spaces and whose morphism set $\mathrm{Mor}_\catC$ consists of measure-preserving maps, considered up to natural equivalence. For example, one could consider the subcategory of Riemannian manifolds, whose objects are equivalence classes of Riemannian manifolds endowed with geodesic distance and normalized Riemannian volume and whose morphisms are equivalence classes of smooth measure-preserving maps. Throughout the paper, we abuse terminology slightly and use mm-spaces as representatives for elements of $\mathrm{Obj}_\catC$ (which are technically equivalence classes thereof), and likewise use maps to represent elements of $\mathrm{Mor}_\catC$.  

Given a subcategory of mm-spaces $\catC$, we pose the following inverse problems, stated here  informally:
\begin{itemize}
\item[(\globalInj)] \textbf{Global Injectivity.} For $\mX,\mY \in \mathrm{Obj}_\catC$, does the existence of $\phi \in \mathrm{Mor}_\catC$ such that $dh_\mX = dh_\mY \circ \phi$ imply $\mX \approx \mY$? More strongly, does $dH_\mX = dH_\mY$ imply $\mX \approx \mY$? 
\item[(\homotopy)] \textbf{Homotopy Type Characterization.} Does the existence of $\phi \in \mathrm{Mor}_\catC$ such that $dh_\mX = dh_\mY \circ \phi$ (more strongly, does $dH_\mX = dH_\mY$) imply the underlying spaces $X$ and $Y$ are homotopy equivalent?
\item[(\spheres)] \textbf{Sphere Rigidity.} If $\mathrm{Obj}_\catC$ has a notion of a unit sphere $\mathcal{S}$, does $dH_\mX = dH_\mathcal{S}$ imply $\mX \approx \mathcal{S}$?
\item[(\local)] \textbf{Local Injectivity.} For $\mX \in \mathrm{Obj}_\catC$, does there exist $\epsilon_\mX > 0$ such that the existence of $\phi \in \mathrm{Mor}_\catC$ with $dh_\mX = dh_\mY \circ \phi$ implies $\mX \approx \mY$, provided $Y$ is $\epsilon_\mX$-close to $X$ in Gromov-Hausdorff distance?
\end{itemize}
Problem (\globalInj) is an inverse problem in the sense that it asks whether the map taking a mm-space from a prescribed class to its global shape measure is injective, i.e., whether it has a left inverse. Problems (\homotopy) and (\spheres) are relaxations of the first problem, generalizing problems studied classically in differential geometry to handle categories whose objects are not necessarily smooth. In particular, (\spheres) can be seen as a generalization of the sphere rigidity problems which are the focus of much research in classical differential geometry (e.g., Aleksandrov's Theorem \cite{aleksandrov1962uniqueness} and Cheng's Rigidity Theorem \cite{cheng}). Problem (\local) asks a localized version of (\globalInj). 

The precise statements of these inverse problems, provided in Section \ref{sec:inverse_problems}, are given in terms of pseudometrics $L_\mathrm{h}^\catC$ and $L_\mathrm{H}^\catC$ on $\mathrm{Obj}_\catC$, defined using the local and global shape measures (or, rather, cumulative versions referred to as \emph{distance distributions}). To provide context for the pseudometrics $L_\mathrm{h}^\catC$ and $L_\mathrm{H}^\catC$ when the morphisms of $\catC$ are restricted, we introduce a variant of Gromov-Wasserstein distance which is equipped to take these restrictions into account. We define for each $p \geq 1$ a  dissimilarity measure on $\mathrm{Obj}_\catC$ called \emph{Gromov-Monge $p$-distance}, denoted $\dgm^\catC$, and outline some basic properties of the family of distances. Roughly, the Gromov-Monge $p$-distance between mm-spaces $\mathcal{X}$ and $\mathcal{Y}$ in $\mathrm{Obj}_\catC$ is given by $\dgm^\catC(\mathcal{X},\mathcal{Y}) = \inf_\phi \mathrm{cost}_p(\phi)$, where the infimum is over measure-preserving maps $\phi:X \rightarrow Y$ belonging to $\mathrm{Mor}_\catC$ and $\mathrm{cost}_p$ is an $L^p$-type cost function measuring the overall geometric distortion incurred by $\phi$ (e.g., $\phi$ is an isometry if and only if $\mathrm{cost}_p(\phi) = 0$).  The main theorem of this section, Theorem \ref{thm:dgm_is_metric}, describes the sense in which each $\dgm^\catC$ is a \emph{distance}. We then show that $L_\mathrm{h}^\catC$ and $L_\mathrm{H}^\catC$ give computationally tractable lower bounds on $\dgm^\catC$.

\paragraph{Section \ref{sec:plane_curves}.} Here we begin our study of inverse problems (\globalInj)--(\local) concerning the recovery of mm-spaces from volume growth data. This section is focused on smooth manifolds, considered both as embedded submanifolds of Euclidean space with extrinsic distance and as abstract Riemannian manifolds with geodesic distance. We state our main results here somewhat informally and in each case refer back to one of the inverse problems, to which the result is relevant. 

Section \ref{sec:plane_curves} begins with plane curves (considered as mm-spaces with extrinsic distance and normalized arclength measure). The \emph{Curve Histogram Conjecture} of Brinkman and Olver \cite{brinkman2012invariant} proposes that the global distance distribution is a complete invariant of a (sufficiently regular) plane curve $\mX$ in the sense that $dH_{\mX} = dH_{\mY}$ implies that $\mX$ and $\mY$ differ by a rigid motion; i.e., that (\globalInj) is answered in the afirmative. We adapt a counterexample of Mallows and Clark to a conjecture by Blaschke \cite{mallows1970linear} to resolve the Curve Histogram Conjecture:

\begin{theorem}[Proposition \ref{prop:curve_histogram_conjecture_false} and Example \ref{exmp:pessimistic}]
    The curves depicted in Figure \ref{fig:curve_histogram_conjecture} give a counterexample to the Curve Histogram Conjecture. Moreover, for any $\epsilon > 0$ there exists a pair of non-isometric curves $\mX$ and $\mY$ which are both $\epsilon$-close to the unit circle in Hausdorff distance for which $dH_{\mX} = dH_{\mY}$ (\globalInj).
\end{theorem}

Leveraging Taylor expansions of volume growth functions from \cite{karp1989volume,gray1979riemannian}, we obtain sphere rigidity results in the categories of embedded and smooth manifolds.

\begin{theorem}[Theorem \ref{thm:sphere_distributions}, Proposition \ref{prop:sphere_distributions_riemannian} and Examples \ref{exmp:surfaces_same_distribution} and \ref{exmp:pessimistic_riemannain}]
The unit sphere in $\R^{d+1}$ is uniquely determined by its global shape measure among:
\begin{enumerate}
    \item smooth hypersurfaces of $\R^{d+1}$ (with extrinsic distance, normalized uniform measure) satisfying certain  curvature conditions (\spheres);
    \item smooth $d$-dimensional Riemannian manifolds (with geodesic distance, normalized Riemannian measure) satisfying certain curvature conditions (\spheres).
\end{enumerate}
However, there are embedded (respectively, Riemannian) manifolds $\mX$ and $\mY$ which are arbitrarily close to the $d$-sphere in Gromov-Hausdorff distance such that $dH_\mX = dH_\mY$.
\end{theorem}

In the Riemannian setting, this result can be pushed further to a characterization result on other constant curvature manifolds.

\begin{theorem*}[Theorem \ref{thm:surface_characterization}]
A 2-dimensional Riemannian manifold with constant Gauss curvature $\kappa$ is determined by its global shape measure up to diffeomorphism (\homotopy). Moreover, if $\kappa > 0$ then the manifold is determined up to isometry (\spheres). 
\end{theorem*}

In the $\kappa = 0$ (flat torus) case of the above theorem, we provide an example of two nonisometric surfaces with the same distance distributions (Example \ref{exmp:flat_tori}).

\paragraph{Section \ref{sec:metric_trees}.}  Here we consider volume growth inverse problems for the class of \emph{metric graphs}; roughly, metric spaces which are constructed by gluing intervals together to form complex filamentary structures (see Figure \ref{fig:neuron}). Metric graphs serve as models for road networks \cite{ahmed2014local}, neurons \cite{kong2005diversity} and blood vessel systems \cite{chalopin2001modeling,charnoz2005tree}---see \cite{aanjaneya2012metric} for many more examples. While local (and hence global) distance distributions cannot distinguish metric graphs in general (Example \ref{exmp:non_injectivity_local_dist_c_trees}), we show that local distance distributions determine metric graphs in the following sense.

\begin{figure}
\begin{center}
    \includegraphics[width = 0.3\textwidth]{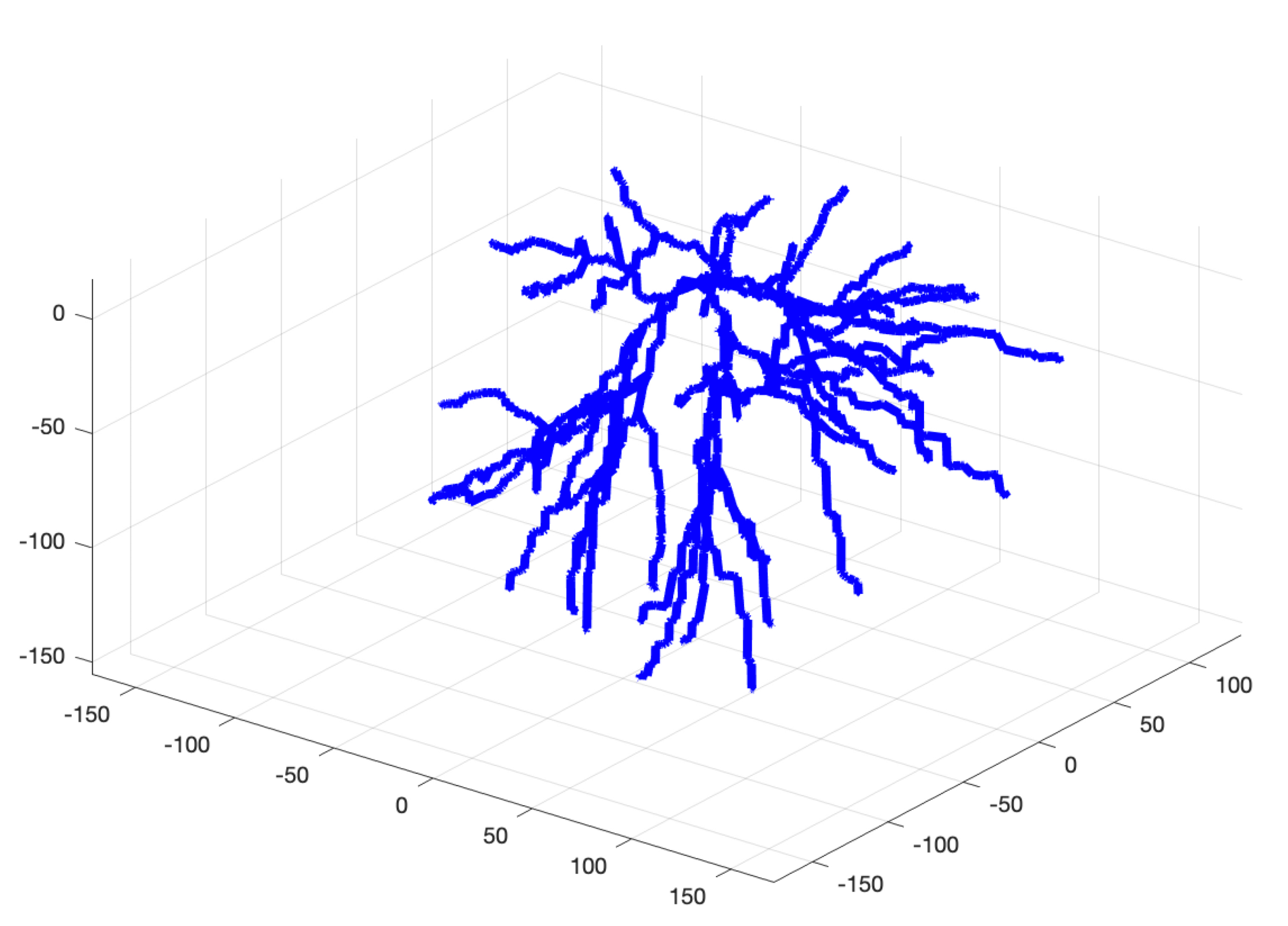}
    \qquad
    \includegraphics[width = 0.3\textwidth]{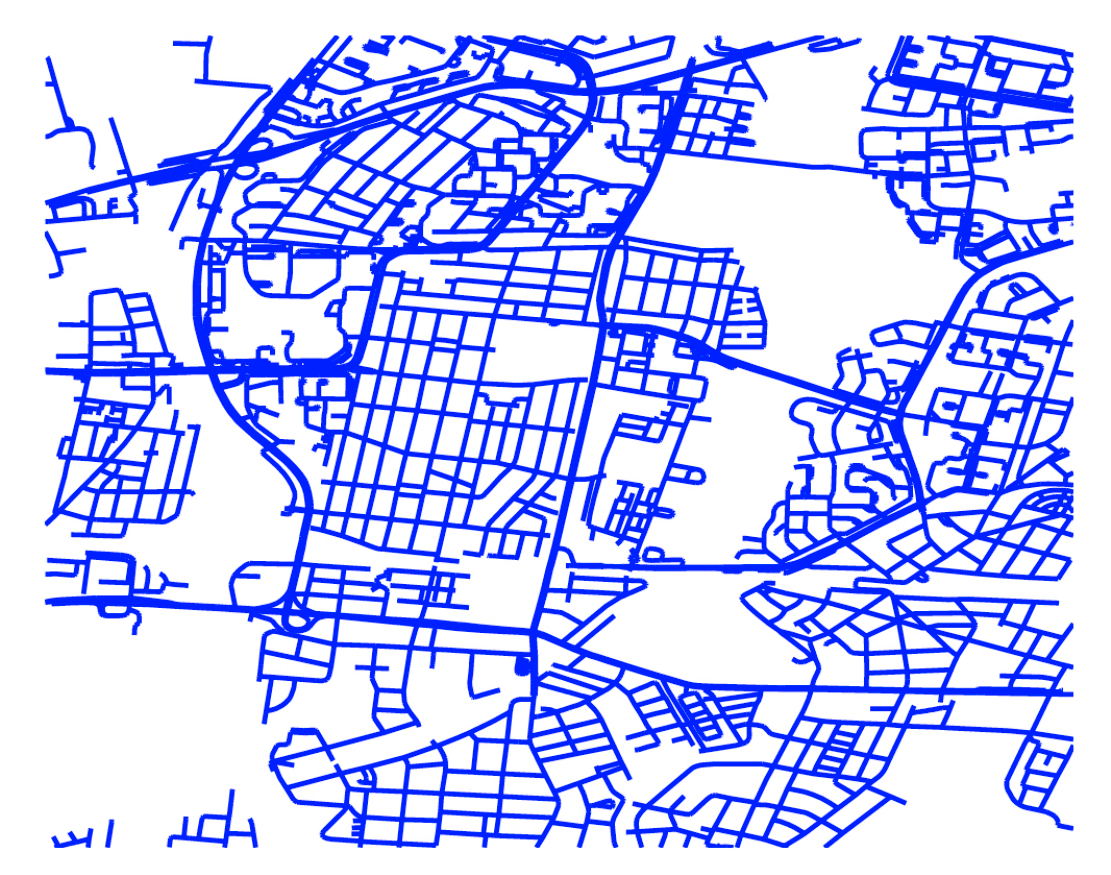}
\end{center}
\caption{Examples of metric graphs, continuous mm-spaces used to represent complex filamentary structures. (Left) Neuron M1KO28 from the NeuroMorpho.Org database \cite{ascoli2007neuromorpho,ballesteros2010alterations}. (Right) Road network from \cite{ahmed2014local}.}\label{fig:neuron}
\end{figure}

\begin{theorem*}[Theorem \ref{thm:continuous_maps_graphs}] For metric graphs $\mathcal{G}$ and $\mathcal{H}$, if there exists a continuous measure-preserving map $\phi:G \rightarrow H$ with $dh_\mG = dh_\mH \circ \phi$, then $\mG$ and $\mH$  are isomorphic (\globalInj).
\end{theorem*}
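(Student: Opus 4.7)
The plan is to use the local distance distribution to extract enough combinatorial and metric data from $\phi$ to reduce the problem to showing that $\phi$ is a bijective isometry edge-by-edge, and then to assemble these edgewise isometries globally.

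First, I would extract combinatorial invariants from $dh_{\mathcal{G}}(x)$. At a point $x$ of metric degree $d$, the ball $B_{\mathcal{G}}(x, r)$ is, for sufficiently small $r$, the union of $d$ geodesic segments of length $r$; hence $h_{\mathcal{G}}(x, r) = d\,r/L_{\mathcal{G}} + O(r^2)$, where $L_{\mathcal{G}}$ is the total length of $\mathcal{G}$. Thus $dh_{\mathcal{G}}(x)$ determines both $\deg(x)$ and $L_{\mathcal{G}}$, and the hypothesis $dh_{\mathcal{G}} = dh_{\mathcal{H}} \circ \phi$ forces $\deg(\phi(x)) = \deg(x)$ and $L_{\mathcal{G}} = L_{\mathcal{H}} =: L$. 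Moreover, the smallest $r > 0$ at which the density $\partial_r h_{\mathcal{G}}(x, \cdot)$ leaves the value $2/L$ equals the distance $\rho(x)$ from $x$ to the nearest vertex of degree $\neq 2$; since $\mathcal{G}$ is not the circle graph such vertices exist, and $\phi$ must carry them to vertices of $\mathcal{H}$ of the same degree, so $\rho$ is a nontrivial invariant preserved pointwise by $\phi$.

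Next, I would combinatorialize $\phi$. Continuity together with degree preservation sends interior edge points (degree $2$) to interior edge points; since the interior edge points of $\mathcal{H}$ decompose as the disjoint union of its open edges, each open edge $e$ of $\mathcal{G}$ maps into a single open edge $e^\star$ of $\mathcal{H}$, with $\phi(\overline e) \subseteq \overline{e^\star}$ by continuity, and endpoints of $e$ carried to endpoints of $e^\star$. Parameterizing $e = [0, \ell_e]$ and $e^\star = [0, \ell_{e^\star}]$ by arc length and writing $\phi(e(t)) = e^\star(f(t))$, the invariance $\rho(e(t)) = \rho(e^\star(f(t)))$ yields
$$\min(t,\, \ell_e - t) \;=\; \min(f(t),\, \ell_{e^\star} - f(t)).$$
Together with the endpoint conditions, a monotone $f$ forces $\ell_e = \ell_{e^\star}$ and $f(t) \in \{t,\, \ell_{e^\star} - t\}$, so $\phi|_e$ is an isometry onto $e^\star$. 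Measure-preservation together with $L_{\mathcal{G}} = L_{\mathcal{H}}$ gives $\sum_{e \mapsto e^\star} \ell_e = \ell_{e^\star}$ for each $e^\star$, and with the edgewise equality $\ell_e = \ell_{e^\star}$ this forces exactly one preimage edge per edge of $\mathcal{H}$. Continuity at vertices then promotes the edge bijection to a vertex bijection compatible with incidences, so $\phi$ is a global bijective isometry.

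The main obstacle is ruling out non-monotone $f$, i.e., a fold of $\phi|_e$, since the invariant $\rho$ alone is compatible with tent-like maps. To close this gap I would invoke the higher-order breakpoints of $\partial_r h_{\mathcal{G}}(x, \cdot)$, which record the ordered distances from $x$ to farther vertices of $\mathcal{G}$: at a fold point $p \in e$, nearby $q_1, q_2$ on opposite sides of $p$ satisfy $\phi(q_1) = \phi(q_2)$, so $dh_{\mathcal{G}}(q_1) = dh_{\mathcal{G}}(q_2)$, and comparing these distance signatures excludes folding except in the presence of an involutive symmetry of $\mathcal{G}$ that reflects $e$; this residual symmetric case is then ruled out by reconciling the global length-balance $\sum_{e \mapsto e^\star} \ell_e = \ell_{e^\star}$ with the forced edgewise equality $\ell_e = \ell_{e^\star}$ and the constraint that endpoints of $e$ must map to endpoints of $e^\star$.
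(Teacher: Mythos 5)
Your route is genuinely different from the paper's: you combinatorialize $\phi$ edge-by-edge using degree preservation plus connectedness and then try to show each $\phi|_e$ is an isometry, whereas the paper proves injectivity directly by a measure-counting contradiction around a hypothetical double point $\phi(x)=\phi(x')$ and only then upgrades the resulting homeomorphism to an isometry via $1$-Lipschitzness in both directions. The first two stages of your plan are essentially sound, modulo one technicality you should patch: the first breakpoint of $\partial_r h_{\mathcal{G}}(x,\cdot)$ does \emph{not} always equal the distance to the nearest node (e.g.\ at the midpoint of an edge joining a leaf to a degree-$3$ node, the boundary count stays at $2$ when the two wavefronts hit both endpoints simultaneously, so no breakpoint occurs there); this happens only at isolated points and can be repaired by a continuity/density argument, but as stated the identity $\min(t,\ell_e-t)=\min(f(t),\ell_{e^\star}-f(t))$ is not immediate.

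The genuine gap is the fold exclusion, which is exactly the injectivity statement that occupies almost all of the paper's proof, and your primary mechanism for it does not work. From $\phi(q_1)=\phi(q_2)$ you only get $dh_{\mathcal{G}}(q_1)=dh_{\mathcal{G}}(q_2)$, and equality of local distance distributions at two points of $G$ does \emph{not} force any ``involutive symmetry of $\mathcal{G}$ that reflects $e$'' --- the counterexamples in Section \ref{sec:counterexamples} exist precisely because pointwise-matching volume growth carries no such rigidity. So the reduction to a ``residual symmetric case'' is unfounded, and the closing sentence cannot be evaluated because the edgewise equality $\ell_e=\ell_{e^\star}$ that it invokes was derived only under the monotonicity you are trying to prove. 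The fix is to drop the distance-signature detour entirely and run the measure argument you gesture at: the $\rho$-constraint confines $\phi(e)$ to the set of points of $e^\star$ within distance $\ell_e/2$ of an endpoint, so a folded $\phi|_e$ cannot cover the point of $e^\star$ at distance $\ell_{e^\star}/2$ from its endpoints; surjectivity of $\phi$ (which follows from continuity plus measure preservation, as in the paper) forces that point to have a preimage, necessarily the midpoint of some edge $e'\mapsto e^\star$ with $\ell_{e'}=\ell_{e^\star}$, and then the balance $\sum_{e''\mapsto e^\star}\ell_{e''}=\ell_{e^\star}$ leaves room for only that one preimage edge, whose folded image cannot exhaust $e^\star$ --- contradicting $\mu_G(\phi^{-1}(e^\star))=\mu_H(e^\star)$. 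Written out, this is a legitimate alternative to the paper's Case 1, but it must actually be written out.
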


We also obtain homotopy type characterization and sphere rigidity results for metric graphs. Observe that the homotopy type characterization result drops the continuity requirement on the measure-preserving map.

\begin{theorem*}[Theorem \ref{thm:homotopy_type_metric_graphs}]
For metric graphs $\mathcal{G}$ and $\mathcal{H}$, if there exists a measure preserving map $\phi:G \rightarrow H$ such that $dh_\mG = dh_\mH \circ \phi$ then $G$ and $H$ are homotopy equivalent (\homotopy).
\end{theorem*}

\begin{theorem*}[Theorem \ref{thm:sphere_characterization_metric_graphs}]
If a metric graph $\mathcal{G}$ satisfies $dH_\mathcal{G} = dH_\mathcal{C}$, where $C$ is a circle endowed with arclength distance and normalized length measure, then $\mathcal{G} \approx \mathcal{C}$ (\spheres).
\end{theorem*}

A local injectivity result is obtained in the subcategory of \emph{metric trees} (contractible metric graphs). 

\begin{theorem*}[Theorem \ref{thm:main_theorem_mtrees}]
Local shape measures distinguish trees locally in the sense of (\local): for every metric tree $\mT$ there exists $\epsilon_\mT > 0$ such that if $\mS$ is another metric tree which is $\epsilon_\mT$-close to $\mT$ in Gromov-Hausdorff distance and such that $dh_\mT = dh_{\mS} \circ \phi$ for some measure-preserving map $\phi$, then $\mS \approx \mT$.
\end{theorem*}

The proof establishes a connection to a distance between metric trees first considered in the computational topology  community called the \emph{interleaving distance} \cite{morozov2013interleaving,agarwal2015computing}. 

\paragraph{Section \ref{sec:discussion}.} 

The paper concludes with a summary of our progress on inverse problems of for mm-spaces in Table \ref{tab:summary}. There are many open questions naturally arising from our work, which are pointed out in ``Question" environments throughout the paper. 

\section{Distance Distributions}\label{sec:distance_distributions_section}

\subsection{Preliminaries}\label{sec:preliminary_definitions}

\subsubsection*{Metric Measure Spaces}

 Let $\mM$ denote the collection of isometry classes of compact metric spaces $(X,d_X)$. In this paper, we primarily consider metric spaces which have been endowed with extra data in the form of a probability measure. A triple $(X,d_X,\mu_X)$ where $(X,d_X)$ is a compact metric space and $\mu_X$ is a Borel probability measure on $X$ is referred to as a \emph{metric measure space} (\emph{mm-space}) and will be denoted by $\mX = (X,d_X,\mu_X)$. For convenience, we always assume that the support of $\mu_X$ is all of $X$.
 
 Let $\mathcal{P}(X)$ denote the set of all fully-supported Borel probability measures on $X$. For a Borel measurable map $\phi:X \rightarrow Y$ between metric spaces and a measure $\mu_X$ on $X$, we will use the notation $\phi_\# \mu_X$ for the \emph{pushforward} of $\mu_X$ by $\phi$. This is defined on a Borel subset $A \subset Y$ by $\phi_\# \mu_X (A) = \mu_X(\phi^{-1}(A))$. An \emph{isomorphism} between mm-spaces $\mX$ and $\mY$ is a measure-preserving map $\phi:X\rightarrow Y$ which is also a metric isometry; i.e., $\phi_\# \mu_X = \mu_Y$ and $d_X = d_Y\circ(\phi \times \phi).$ When such an isomorphism exists, we write $\mX\approx \mY$. Recall that $\mM^m$ denotes the collection of isomorphism classes of mm-spaces.

Let $\pi_X:X \times Y \rightarrow X$ and $\pi_Y:X \times Y \rightarrow Y$ be coordinate projection maps. For mm-spaces $\mX$ and $\mY$, define 
\begin{linenomath}\begin{equation*}
\mU(\mu_X,\mu_Y) := \{\mu \in \mathcal{P}(X \times Y) \mid (\pi_X)_\# \mu = \mu_X \mbox{ and } (\pi_Y)_\# \mu = \mu_Y \}
\end{equation*}\end{linenomath} 
to be the set of all probability measures on $X\times Y$ with marginals $\mu_X$ and $\mu_Y$, which are called \emph{couplings} between $\mu_X$ and $\mu_Y$. The set of couplings is never empty, as it contains the \emph{product measure} $\mu_X \otimes \mu_Y$, which is uniquely determined by the property $\mu_X \otimes \mu_Y (A \times B) = \mu_X(A)\mu_Y(B)$ for all Borel sets $A\subset X$ and $B\subset Y$. In a similar vein, let 
\begin{linenomath}\begin{equation*}
\mT(\mu_X,\mu_Y):=\left\{\phi:X\rightarrow Y \mid \,\phi_\#\mu_X=\mu_Y\right\}
\end{equation*}\end{linenomath}
denote the set of all measure-preserving maps between $X$ and $Y$. Notice that it could be that $\mT=\emptyset$; for example, there is never a measure-preserving map from the mm-space containing a single point to an mm-space with larger cardinality. Given any $\phi\in \mT(\mu_X,\mu_Y)$, one can consider the probability measure $\mu_\phi$ on $X\times Y$ given by 
\begin{equation}\label{eqn:pushforward_coupling}
    \mu_\phi := (\mathrm{id}_X \times \phi)_\# \mu_X,
\end{equation}
where $\mathrm{id}_X$ is the identity map on $X$. It is straightforward to check that $\mu_\phi\in \mU(\mu_X,\mu_Y).$ 

\subsubsection*{Optimal Transport}

In this paper, we consider comparisons between mm-spaces. To build up methods for doing so, we begin with a classical notion of comparing probability distributions relative to some auxilliary cost function. Let $\mu_X$ and $\mu_Y$ be Borel probability measures on Polish spaces $X$ and $Y$, respectively, and let $c:X \times Y \rightarrow \R$ be some \emph{cost function}. The \emph{Monge optimal transport problem} seeks to solve the optimization problem
\begin{linenomath}\begin{equation*}
\inf_{\phi \in \mathcal{T}(\mu_X,\mu_Y)} \int_X c(x,\phi(x)) \; \mu_X(dx).
\end{equation*}\end{linenomath}
The problem was originally formulated by Monge in the 18th century \cite{monge1781memoire} in the context of transporting mass from one location to another, where $X = Y = \R^n$ and $c(x,y)$ is Euclidean distance. 

The Monge optimal transport problem is potentially ill-posed; as was observed above, the set $\mathcal{T}(\mu_X,\mu_Y)$ can be empty. This problem can be relaxed to the more well-behaved {Kantorovich-Rubinstein-Wasserstein optimal transport problem}, which seeks to solve
\begin{linenomath}\begin{equation}\label{eqn:wasserstein_OT}
\inf_{\mu \in \mathcal{U}(\mu_X,\mu_Y)} \int_{X \times Y} c(x,y) \; \mu (dx \times dy).
\end{equation}\end{linenomath}
The following is a basic result in optimal transport theory (see \cite{ambrosio2008gradient}).

\begin{lemma}\label{lem:realized_by_coupling}
If $c$ is lower semi-continuous and $\mU(\mu_X,\mu_Y)$ is a tight collection of measures (in the sense of Prokhorov's Theorem) then the infimum \eqref{eqn:wasserstein_OT} is always realized by some coupling $\mu \in \mU(\mu_X,\mu_Y)$.
\end{lemma}

The most commonly considered cost functions are in the setting that the probability measures are defined on the same ground space $X$, which is endowed with a metric $d_X$. The Kantorovich  optimization problem then yields a family of metrics on (a subset of) $\mathcal{P}(X)$. The \emph{Wasserstein $p$-distance} for $(X,d_X)$ is 
\begin{linenomath}\begin{equation}\label{eqn:wasserstein_distance}
d_{\mathrm{W},p}^X(\mu_1,\mu_2)  := \inf_{\mu \in \mathcal{U}(\mu_1,\mu_2)} \left(\int_{X \times X} d_X(x,y)^p \; \mu(dx \times dy) \right)^{1/p}.
\end{equation}\end{linenomath}
If $X$ is noncompact, the metric is defined on an appropriate subset of $\mathcal{P}(X)$ where the integral converges.

 \subsubsection*{Gromov-Hausdorff Distance}
 
 The classical notion of Hausdorff distance between compact subsets of an ambient metric space $Z$ can be vastly generalized to give a metric on $\mM$. The  \emph{Gromov-Hausdorff (GH) distance} $\dgh$ is defined by
\begin{equation}\label{eqn:gromov_hausdorff_defn}
\dgh(X,Y) := \inf_{Z,\phi_X,\phi_Y} d^Z_\mathrm{H}(\phi_X(X),\phi_Y(Y)),
\end{equation}
where the infimum is taken over all ambient metric spaces $(Z,d_Z)$ and isometric embeddings $\phi_X:X \rightarrow Z$ and $\phi_Y:Y \rightarrow Z$, and where $d^Z_H$ denotes Hausdorff distance in $Z$.

The GH distance can be reformulated in several ways, and we will make particular use of a reformulation in terms of the distortion of a correspondence. For compact metric spaces $X$ and $Y$, let $\Gamma_{X,Y}:X \times Y \times X \times Y \rightarrow \R$ denote the \emph{distortion map}, defined by 
\begin{linenomath}\begin{equation*}
\Gamma_{X,Y}(x,y,x',y') := |d_X(x,x') - d_Y(y,y')|.
\end{equation*}\end{linenomath}
A \emph{correspondence} between sets $X$ and $Y$ is a subset $R \subset X \times Y$ such that the coordinate projection maps $\pi_X:X \times Y \rightarrow X$ and $\pi_Y:X \times Y \rightarrow Y$ give surjections when restricted to $R$. The set of all correspondences between $X$ and $Y$ is denoted $\mathcal{R}(X,Y)$. We then have the following reformulation of GH distance \cite{burago2001course}:
\begin{equation}\label{eqn:GH_reformulation}
\dgh(X,Y) = \frac{1}{2} \inf_{R \in \mathcal{R}(X,Y)} \sup_{(x,y),(x',y') \in R} \Gamma_{X,Y}(x,y,x',y').
\end{equation}
One might notice that the righthand side of \eqref{eqn:GH_reformulation} takes the form of an infimum of instances of an $L^\infty$ norm of the function $\Gamma_{X,Y}$, suggesting a relaxation to an $L^p$ norm. 

\subsubsection*{Gromov-Wasserstein Distance}

The \emph{Gromov-Wasserstein $p$-distance} $\dgw$ gives a Kantorovich-style relaxation of the formulation \eqref{eqn:GH_reformulation} of Gromov-Hausdorff distance. It is defined on mm-spaces $\mX$ and $\mY$ by 
\begin{linenomath}\begin{align*}
&\dgw(\mX,\mY) := \inf_{\mu \in \mU(\mu_X,\mu_Y)} \|\Gamma_{X,Y}\|_{L^p(\mu \otimes \mu)} \\
\hspace{.1in} &= \inf_{\mu \in \mU(\mu_X,\mu_Y)} \left(\iint_{X \times Y \times X \times Y} \big|d_X(x,x') - d_Y(y,y')\big|^p \mu \otimes \mu (dx \times dy \times dx' \times dy') \right)^{1/p}.
\end{align*}\end{linenomath}
Gromov-Wasserstein $p$-distance defines a metric on $\mM^m$ whose topological properties and applications to shape analysis were explored in  \cite{memoli2007use, memoli2011gromov}. Geometrical properties of the metric such as its  geodesic structure and bounds on its Alexandrov curvature were explored in \cite{sturm2012space}. Gromov-Wasserstein distance has since seen many applications to data science and shape analysis, including studying network data sets \cite{hendrikson2016using}, computing barycenters of collections of shapes \cite{peyre2016gromov} and  improving  deep learning algorithms for shape classification \cite{ezuz2017gwcnn}.

Exact computation of GW distance on finite mm-spaces distance is NP-Hard, as it can be expressed as a quadratic programming problem with a nonconvex objective function. Even approximating GW distance via gradient descent is computationally taxing, incurring a computational cost of $O(n^4)$ with a naive implementation, although this can be improved to $O(n^3\log(n))$ in the $p=2$ case \cite{peyre2016gromov} and further improvements can be gained in the setting of sparse graphs \cite{xu2019gromov}. For this reason, it is convenient to consider the computable lower bounds on GW distance defined in the following subsection, which themselves serve as pseudometrics on $\mM^m$.

\subsubsection*{Notation and Terminology}

Throughout the paper, we consider various relaxations of the notion of a metric. A function $d_X:X \times X \rightarrow \R$ is 
\begin{itemize}
\item a \emph{pseudometric} if it satisfies the metric axioms, except that we allow $d_X(x,y) = 0$ for some pairs $x \neq y$;
\item a \emph{quasimetric} if it satisfies the metric axioms, except that we allow $d_X(x,y) \neq d_X(y,x)$ for some $x$ and $y$;
\item an \emph{extended metric} if it satisfies the metric axioms, except that we allow $d_X(x,y) = \infty$ for some pairs $x$ and $y$. We can similarly define extended pseudometrics and extended quasimetrics.
\end{itemize}
For a metric space $(X,d_X)$, $B_X(x,r)$ denotes the metric ball centered at $x \in X$ with radius $r$. We use $\overline{U}$ to denote the closure of a subset $U$ of a topological space.

\subsection{Distance Distributions and Associated Pseudometrics}\label{sec:distance_distributions}

Distance distributions are isomorphism invariants of mm-spaces, obtained as cumulative versions of the shape measures defined in the introduction, and we show below how they can be used to define pseudometrics on the space $\mM^m$. These lead to reformulations of the inverse problems (\globalInj)--(\local) established in the introduction. The new formulations are better contextualized as questions about the effectiveness of these pseudometrics in distinguishing mm-spaces within given categories.
 
\subsubsection*{Global Distance Distributions}

The \emph{(global) distance distribution} of $\mX \in \mM^m$ is the function $H_\mX : \R_{\geq 0} \rightarrow \R_{\geq 0}$  defined by
\begin{linenomath}\begin{equation*}
H_\mX(r) := \mu_X \otimes \mu_X \left(\{(x,x') \in X \times X \mid d_X(x,x') \leq r\}\right).
\end{equation*}\end{linenomath}
Distance distributions (sometimes referred to as \emph{shape distributions} or \emph{distance histograms}) are a standard tool for summarizing metric spaces. They have been used for classification of geometric objects \cite{osada2002shape,bonetti2005interpoint} and their mathematical properties have been studied in a variety of contexts \cite{boutin2004reconstructing,brinkman2012invariant,berrendero2016shape}. The global distance distribution is related to the global shape measure $dH_\mX$ defined in the introduction by the formula
\begin{linenomath}\begin{equation*}
H_{\mX}(r) =  \int_0^r dH_{\mX}(ds).
\end{equation*}\end{linenomath}

For $p\geq 1$, define the function $L_{\mathrm{H},p}:\mM^m \times \mM^m \rightarrow \R$ by 
\begin{linenomath}\begin{equation*}
L_{\mathrm{H},p}(\mX,\mY) := d_{\mathrm{W},p}^\R(dH_\mX,dH_\mY).
\end{equation*}\end{linenomath}
Since the global shape measures $dH_\mX$ and $dH_\mY$ are (compactly supported) probability distributions on $\R$, and Kantorovich optimal transport has a closed form solution in the 1-dimensional case given in terms of generalized inverses of cumulative distributions (see \cite[Remark 2.19]{villani2003topics}), we have the more explicit formula
\begin{linenomath}\begin{equation*}
L_{\mathrm{H},p}(\mX,\mY) = \left(\int_0^1 \left| H_\mX^{-1}(u) - H_\mY^{-1}(u)\right|^p \; du\right)^{1/p},
\end{equation*}\end{linenomath}
where, for $u \in [0,1]$, $H_\mX^{-1}(u) := \inf \{r \geq 0 \mid H_\mX(r) > u \}$ denotes the generalized inverse of $H_\mX$ at $u$. The $p=1$ case plays a special role, since one can show \cite{memoli2011gromov} that in this case the formula simplifies to
\begin{linenomath}\begin{equation*}
L_\mathrm{H}(\mX,\mY) := L_{\mathrm{H},1}(\mX,\mY) = \int_0^\infty \left|H_\mX(u) - H_\mY(u)\right| \; du.
\end{equation*}\end{linenomath}

The quantity $L_{\mathrm{H},p}(\mX,\mY)$ (particularly in the $p=1$ case) gives an intuitive notion of distance between two mm-spaces. To be precise, it is easy to check that $L_{\mathrm{H},p}$ is symmetric in its arguments, that $L_{\mathrm{H},p}(\mX,\mY) \geq 0$ for all $\mX,\mY$, that if $\mX$ and $\mY$ are isomorphic then $L_{\mathrm{H},p}(\mX,\mY) = 0$, and that $L_{\mathrm{H},p}$ satisfies the triangle inequality. The obstruction to $L_{\mathrm{H},p}$ defining a proper metric on $\mM^m$ is that it can vanish for nonisomorphic mm-spaces. 

\begin{example}\label{exmp:integer_sets}
A simple example of nonisomorphic mm-spaces with the same distance distributions appears in \cite{bloom1977counterexample} in the context of difference sets of integers. Define mm-spaces $\mX,\mY$ by taking $X = \{0,1,4,10,12,17\}$ and $Y = \{0,1,8,11,13,17\}$ and endowing each set with Euclidean distance and uniform measures. One can easily check that $\mX \not \approx \mY$, yet $L_\mathrm{H}(\mX,\mY) = 0$. 
\end{example}

Putting the above observations together, we deduce the following.

\begin{proposition}\label{prop:global_dist_pseudometric}
For each $p \geq 1$, $L_{\mathrm{H},p}$ defines a pseudometric on $\mM^m$. 
\end{proposition}

 \subsubsection*{Local Distance Distributions}

The \emph{local distance distribution} of a mm-space $\mX$  is the function $h_\mX:X\times \R_{\geq 0} \rightarrow \R_{\geq 0}$ defined by 
\begin{linenomath}\begin{equation*}
h_\mX(x,r) := \mu_X\big(\overline{B_X(x,r)}\big).
\end{equation*}\end{linenomath}
 Local distance distributions (and closely related invariants, such as \emph{shape contexts}) have also appeared frequently in the shape analysis literature (e.g., \cite{gelfand2005robust,belongie2006matching,shi2007direct}). For each $x \in X$, the function $r \mapsto h_\mX(x,r)$ measures volume growth, localized at $x$, and inverse problems about these volume growth functions similar to those considered in the present paper have appeared previously in \cite{gray1979riemannian,sturm2012space}.  The local distance distribution is related to the the global distance distribution and to the local shape distribution $dh_\mX$ defined in the introduction via, respectively, 
 \begin{linenomath}\begin{equation*}
 H_\mX(r) = \int_X h_\mX(x,r) \; \mu_X(dx) \quad \mbox{and} \quad h_{\mX}(x,r) =  \int_0^r dh_{\mX}(x)(ds).
 \end{equation*}\end{linenomath}

As in the case of global distance distributions, local distance distributions can be used to define a family of pseudometrics on $\mM^m$. These pseudometrics necessarily have a more complicated structure, since comparison of local distributions requires some registration between elements of $X$ and $Y$. For each $p \geq 1$, we define $L^{\mathrm{K}}_{\mathrm{h},p}:\mM^m \times \mM^m \rightarrow \R$ by
\begin{linenomath}\begin{equation*}
L^{\mathrm{K}}_{\mathrm{h},p}(\mX,\mY) := \inf_{\mu \in \mU(\mu_X,\mu_Y)} \left(\int_{X\times Y} d_{\mathrm{W},p}^\R(dh_\mX(x),dh_\mY(y))^p \; \mu(dx \times dy) \right)^{1/p}.
\end{equation*}\end{linenomath}
The superscript $\mathrm{K}$ indicates that the function depends on a Kantorovich-style optimization over measure couplings.

\begin{remark}
This function is well-defined, since one can show that the function $X \times Y \rightarrow \R$ defined by
\begin{linenomath}\begin{equation*}
(x,y) \mapsto d_{\mathrm{W},p}^\R(dh_\mX(x),dh_\mY(y))
\end{equation*}\end{linenomath}
is continuous, hence Borel measurable. This follows from the observation that the map $(X,d_X) \rightarrow (\mathcal{P}(\R),d_{\mathrm{W},p}^\R):x \mapsto dh_\mX(x)$ is continuous. Indeed, by triangle inequality, if $d_X(x,x') = \delta$ then 
\begin{linenomath}\begin{equation}\label{eqn:continuity_local_distributions}
h_\mX(x,r-\delta) \leq h_\mX(x',r) \leq h_\mX(x,r+\delta)
\end{equation}\end{linenomath}
for all $r \geq 0$. By \cite[Theorem 2.12]{bobkov2014one}, this implies
\begin{linenomath}\begin{equation*}
d_{\mathrm{W},\infty}^\R(dh_\mX(x),dh_\mX(x')) \leq \delta.
\end{equation*}\end{linenomath}
Since Wasserstein $\infty$-distance upper bounds Wasserstein $p$-distances for $p < \infty$, we conclude
\begin{linenomath}\begin{equation*}
d_{\mathrm{W},p}^\R(dh_\mX(x),dh_\mX(x')) \leq d_X(x,x').
\end{equation*}\end{linenomath}
\end{remark}

Once again using the special structure of optimal transport on the real line, one obtains the more explicit formula
\begin{linenomath}\begin{equation*}
L^{\mathrm{K}}_{\mathrm{h},p}(\mX,\mY) = \inf_{\mu \in \mU(\mu_X,\mu_Y)} \left(\int_{X \times Y} \left(\int_0^1 \left| h_\mX^{-1}(x,u) - h_\mY^{-1}(y,u)\right|^p \; du \right) \; \mu(dx \times dy) \right)^{1/p}.
\end{equation*}\end{linenomath}
The Kantorovich-style function $L^{\mathrm{K}}_{\mathrm{h},p}$ suggests an alternative Monge-style formulation over measure preserving maps, and we accordingly define
\begin{linenomath}\begin{equation*}
L^{\mathrm{M}}_{\mathrm{h},p}(\mX,\mY) := \inf_{\phi \in \mathcal{T}(\mu_X,\mu_Y)} \left( \int_X \left(\int_0^1 \left| h_\mX^{-1}(x,u) - h_\mY^{-1}(\phi(x),u)\right|^p \; du \right)  \; \mu_X(dx) \right)^{1/p}.
\end{equation*}\end{linenomath}

Similar to the global distribution setting, the $p=1$ versions of $L^{\mathrm{K}}_{\mathrm{h},p}$ and $L^{\mathrm{M}}_{\mathrm{h},p}$ can be simplified. Given $\mX,\mY\in\mM^m$, one defines the following \emph{local distribution cost function} $c_{\mX,\mY}:X\times Y\rightarrow \R_{\geq 0}$ by 
\begin{linenomath}
\begin{equation}\label{eqn:local_distribution_cost_function}
c_{\mX,\mY}(x,y) := \int_{0}^\infty \big|h_\mX(x,t)-h_\mY(y,t)\big|\,dt.
\end{equation}
\end{linenomath}
Following \cite{memoli2011gromov}, we can then show that
\begin{linenomath}\begin{equation*}
L^{\mathrm{K}}_{\mathrm{h}}(\mX,\mY) := L^{\mathrm{K}}_{\mathrm{h},1}(\mX,\mY) = \inf_{\mu \in \mU(\mu_X,\mu_Y)} \int_{X \times Y} c_{\mX,\mY}(x,y) \; \mu(dx \times dy)
\end{equation*}\end{linenomath}
and
\begin{linenomath}\begin{equation*}
L^{\mathrm{M}}_{\mathrm{h}}(\mX,\mY) := L^{\mathrm{M}}_{\mathrm{h},1}(\mX,\mY) = \inf_{\phi \in \mT(\mu_X,\mu_Y)} \int_X c_{\mX,\mY}(x,\phi(x)) \; \mu_X(dx).
\end{equation*}\end{linenomath}
Thus the $p=1$ functions compare mm-spaces by searching for a coupling or a transport map, respectively, which optimally preserve mm-space structure at the level of local volume growth. 

\begin{example}
The functions $L^{\mathrm{K}}_{\mathrm{h}}$ and $L^{\mathrm{M}}_{\mathrm{h}}$ have better distinguishing power than $L_{\mathrm{H}}$. For example, the spaces in $\mX$ and $\mY$ with $X = \{0,1,4,10,12,17\}$ and $Y = \{0,1,8,11,13,17\}$ from Example \ref{exmp:integer_sets}, which confound the pseudometric $L_\mathrm{H}$, are distinguished by both $L^{\mathrm{K}}_\mathrm{h}$ and $L^{\mathrm{M}}_\mathrm{h}$. This is easy to conclude from the observation that $h_\mX(0,r)$ is piecewise constant, with discontinuities at $r = 1,4,10,12,17$, and that this pattern is not replicated by  $h_\mY(y,r)$ for any $y \in Y$. 
\end{example}

It is straightfoward to show that  $L^{\mathrm{K}}_{\mathrm{h},p}$ (respectively, $L^{\mathrm{M}}_{\mathrm{h},p}$) satisfies the axioms of a metric (respectively, extended quasimetric) on $\mM^m$ except that it is nonvanishing for nonisomorphic spaces. Indeed,  the counterexample below shows that this is not the case in general.

\begin{example}
Example 5.6 from \cite{memoli2011gromov} shows that $L^{\mathrm{M}}_\mathrm{h}$ (and hence $L^{\mathrm{K}}_\mathrm{h}$) does not distinguish nonisomorphic mm-spaces in general. A variant of this example is shown below in Example \ref{exmp:non_injectivity_local_dist_c_trees}.
\end{example}

We therefore have the following.

\begin{proposition}
For $p \geq 1$, $L^{\mathrm{K}}_{\mathrm{h},p}$ defines a pseudometric and $L^{\mathrm{M}}_{\mathrm{h},p}$ defines an extended quasi-pseudometric on $\mM^m$. 
\end{proposition}

The $L$ in our notation $L_{\mathrm{H},p}, L^{\mathrm{K}}_{\mathrm{h},p}$ is chosen because these pseudometrics serve as lower bounds for Gromov-Wasserstein $p$-distances, as was shown in \cite{memoli2011gromov}. The following proposition refines the original lower bound results.

\begin{proposition}\label{prop:gw_lower_bound_hierarchy}
For $p \geq 1$ and mm-spaces $\mX$ and $\mY$,
\begin{linenomath}\begin{equation*}
L_{\mathrm{H},p}(\mX,\mY) \leq L^{\mathrm{K}}_{\mathrm{h},p}(\mX,\mY) \leq d_{\mathrm{GW},p}(\mX,\mY).
\end{equation*}\end{linenomath}
\end{proposition}

\begin{proof}
 The righthand inequality is implicit in Corollary 6.3 of \cite{memoli2011gromov}, so it remains to prove the lefthand inequality, which is novel---it was not obtained in the original paper \cite{memoli2011gromov}, or in the recent related paper \cite{chowdhury2019Stable}. Indeed, both of these papers deduce that $L_{\mathrm{H},p}$ and $L^{\mathrm{K}}_{\mathrm{h},p}$ give lower bounds on $d_{\mathrm{GW},p}$, but they do not give a relationship between the lower bounds themselves.
 
 For each $(x,y) \in X \times Y$, via \cite[Theorem 2.18]{villani2003topics}, choose $\gamma_{x,y} \in \mU(\mu_X,\mu_Y)$ realizing $d_{\mathrm{W},p}^\R(dh_\mX(x),dh_\mY(y))$ explicitly  as that probability measure on $\R^2$ with cumulative distribution function equal to $F_{\gamma_{x,y}}(s,t):=\min\{h_\mX(x,s),h_\mY(y,t)\}$ for each $(s,t)\in \R^2$. Also, using Lemma \ref{lem:realized_by_coupling}, choose $\mu \in \mU (\mu_X,\mu_Y)$ realizing 
\begin{linenomath}\begin{equation*}
\inf_{\mu \in \mU(\mu_X,\mu_Y)} \int_{X \times Y} \Big(d_{\mathrm{W},p}^\R(dh_\mX(x),dh_\mY(y))\Big)^p \mu(dx \times dy).
\end{equation*}\end{linenomath}
 Then we have 
\begin{linenomath}\begin{align}
    \int_{X \times Y} \big(d_{\mathrm{W},p}^\R(dh_\mX(x),dh_\mY(y))\big)^p \mu(dx \times dy) &= \int_{X \times Y} \int_{\R \times \R} |s-t|^p\, \gamma_{x,y}(ds \times dt)\, \mu(dx \times dy) \nonumber \\
    &= \int_{\R \times \R} |s-t|^p \; \overline{\mu}_F(ds \times dt), \label{eqn:hierarchy_claim}
\end{align}\end{linenomath}
by Fubini's Theorem, where $\overline{\mu}_F \in \mathcal{P}(\R \times \R)$ is defined as follows. We begin by defining a cumulative distribution $F:\R \times \R \rightarrow \R$ by 
\begin{linenomath}\begin{equation*}
F(s,t) := \int_{X \times Y} F_{\gamma_{x,y}}(s,t) \; \mu(dx \times dy).
\end{equation*}\end{linenomath}
Since the integrand is continuous, as follows from \eqref{eqn:continuity_local_distributions}, and $\mu$ is Borel, this function is well-defined. We then define $\overline{\mu}_F$ to be the probability measure associated to $F$. We claim that this is given by the formula 
\begin{linenomath}\begin{equation*}
\overline{\mu}_F(A) = \int_{X \times Y} \gamma_{x,y}(A) \, \mu (dx \times dy).
\end{equation*}\end{linenomath}
for measurable $A \subset \R \times \R$, hence \eqref{eqn:hierarchy_claim} holds. One can check this claim easily on rectangles of type $(s,s']\times (t,t']$ using the fact that the integrand of $F(s,t)$ is equal to the cumulative distribution $F_{\gamma_{x,y}}$ for $\gamma_{x,y}$  and the general formula holds by standard arguments.

Next we claim that $\overline{\mu}_F \in \mU(dH_\mX,dH_\mY)$. Indeed, for measurable $S \subset \R$, we have that $\overline{\mu}_F(S \times \R)$ is equal to
\begin{linenomath}\begin{equation*}
 \int_{X \times Y} \gamma_{x,y}(S \times \R) \mu(dx \times dy) = \int_{X \times Y} dh_\mX(x)(S)\, \mu(dx \times dy) = \int_X dh_\mX(x)(S) \,\mu_X(dx) = dH_\mX(S),
\end{equation*}\end{linenomath}
where we have applied marginal constraints for $\gamma_{x,y}$ and $\mu$. A similar computation shows that the other necessary marginal constraint for $\overline{\mu}_F$ is satisfied. We conclude that
\begin{linenomath}\begin{align*}
     L^{\mathrm{K}}_{\mathrm{h},p}(\mX,\mY)^p &= \int_{X \times Y} \big(d_{\mathrm{W},p}^\R(dh_\mX(x),dh_\mY(y))\big)^p \mu(dx \times dy) \\
     &= \int_{\R \times \R} |s -t |^p\, \overline{\mu}_F(ds \times dt) \geq d_{\mathrm{W},p}(dH_\mX,dH_\mY)^p = L_{\mathrm{H},p}(\mX,\mY)^p.
\end{align*}\end{linenomath}
\end{proof}

\subsubsection*{Subcategories of Metric Measure Spaces}

The main question we aim to address in this paper is the extent to which the pseudometrics defined above fail to be true metrics. Since the space $\mM^m$ is so large, this question is intractable in full generality. We therefore restrict our attention to various subcategories of mm-spaces. To make this precise, define a \emph{subcategory of metric measure spaces} to be a category $\catC$ whose objects $\mathrm{Obj}_\catC$ are isomorphism classes of mm-spaces from some fixed family and whose morphisms $\mathrm{Mor}_\catC$ are some appropriately restricted class of measure preserving maps, considered up to pre- and post-composition with isomorphisms. We will frequently abuse terminology and denote elements of $\mathrm{Obj}_\catC$ by representatives of isomorphism classes $\mX$ or $\mY$ and elements of $\mathrm{Mor}_\catC$ by representative measure-preserving maps $\phi:X \rightarrow Y$. For mm-spaces $\mX$ and $\mY$, let $\mathrm{Mor}_\catC(\mu_X,\mu_Y)$ denote the set of admissible measure-preserving morphisms from $X$ to $Y$. 

We provide below several examples of relevant categories $\catC$.

\begin{example}
The category $\catC$ of all mm-spaces with $\mathrm{Mor}_\catC$ containing \emph{continuous} measure-preserving maps.
\end{example}

\begin{example}
The category $\catC$ of all mm-spaces whose morphisms $\mathrm{Mor}_\catC$ are \emph{bijective} measure-preserving mappings. In this case, the set of morphisms between $\mX$ and $\mY$ is empty if $X$ and $Y$ have different cardinality.
\end{example}

Throughout the paper, we use the term \emph{full subcategory} to denote a subcategory with no extra restrictions placed on its morphisms.

\begin{example}
The full subcategory $\catC$ whose objects $\mathrm{Obj}_\catC$ are \emph{finite} mm-spaces, perhaps with a fixed cardinality. For a fixed metric space $(X,d_X)$, one could similarly consider the full category of finite subsets of $X$ endowed with the restricted distance $d_X$ and uniform measure.
\end{example}

\begin{example}
The category $\catC$ whose objects $\mathrm{Obj}_\catC$ are compact Riemannian manifolds, perhaps with fixed dimension or diffeotype, endowed with geodesic distance and normalized Riemannian volume, and whose morphisms $\mathrm{Mor}_\catC$ are measure-preserving maps with fixed regularity $C^0, C^1, C^\infty$, et cetera.
\end{example}

A motivation for considering the Monge-style pseudometric $L^{\mathrm{M}}_{\mathrm{h},p}$ (as opposed to only considering the Kantorvich-style $L^{\mathrm{K}}_{\mathrm{h},p}$) is that it naturally restricts to a given subcategory $\catC$ of mm-spaces. Indeed, define
\begin{linenomath}\begin{equation*}
L^\catC_{\mathrm{h},p}: \mathrm{Obj}_\catC \times \mathrm{Obj}_\catC \rightarrow \R
\end{equation*}\end{linenomath}
by 
\begin{linenomath}\begin{equation*}
L^{\catC}_{\mathrm{h},p}(\mX,\mY) := \inf_{\phi \in \mathrm{Mor}_\catC(\mu_X,\mu_Y)} \left(\int_X \int_0^1 \left| h_\mX^{-1}(x,u) - h_\mY^{-1}(\phi(x),u)\right|^p \; du  \; \mu_X(dx) \right)^{1/p}.
\end{equation*}\end{linenomath}
In the $p=1$ case, this reduces to 
\begin{linenomath}\begin{equation*}
L^\catC_\mathrm{h}(\mX,\mY) := L^\catC_{\mathrm{h},1}(\mX,\mY) =  \inf_{\phi \in \mathrm{Mor}_\catC(\mu_X,\mu_Y)} \int_X c_{\mX,\mY}(x,\phi(x)) \; \mu_X(dx).
\end{equation*}\end{linenomath}
We similarly define $L_{\mathrm{H},p}^\catC$ to be the restriction of $L_{\mathrm{H},p}$ to $\mathrm{Obj}_\catC \times \mathrm{Obj}_\catC$ and use the notation $L_\mathrm{H}^\catC : = L_{\mathrm{H},1}^\catC$. Note that, since $L_{\mathrm{H},p}$ does not depend on any registration between mm-spaces, the formulas for $L_{\mathrm{H},p}^\catC$ and $L_{\mathrm{H},p}$ are exactly the same---we only use the notation $L^\catC_{\mathrm{H},p}$ to emphasize restriction to the subcategory and for the sake of symmetry with the notation $L_{\mathrm{h},p}^\catC$. 

\subsection{Inverse Problems for Metric Measure Spaces}\label{sec:inverse_problems}

We now have several pseudometrics defined on $\mM^m$, or on subcategories thereof. Moreover, these pseudometrics are computationally feasible for finite (or finite approximations of) mm-spaces: $L_{\mathrm{H},p}$ can be expressed as a straightforward comparison between distance histogram vectors,  $L^{\mathrm{K}}_{\mathrm{h},p}$ can be computed via optimization of a linear function over a convex domain and $L^{\mathrm{M}}_{\mathrm{h},p}$ can be computed via the Hungarian algorithm. Given a subcategory of mm-spaces $\catC$, we wish to determine the extent to which the pseudometrics $L_{\mathrm{H},p}^\catC$ and $L_{\mathrm{h},p}^\catC$ fail to be true metrics. Since the formulas for these pseudometrics have simpler, more interpretable forms when $p=1$, we will generally restrict our attention to $L_\mathrm{H}^\catC = L_{\mathrm{H},1}^\catC$ and $L_\mathrm{h}^\catC = L_{\mathrm{h},1}^\catC$. The main problem we consider in this paper is:

\medskip
\noindent \textbf{Main Problem.} For a fixed subcategory of mm-spaces $\catC$, give conditions which guarantee that mm-spaces $\mX,\mY \in \mathrm{Obj}_\catC$ satisfy $L_\mathrm{H}^\catC(\mX,\mY) = 0 \Leftrightarrow \mX \approx \mY$ or $L_\mathrm{h}^\catC(\mX,\mY) = 0 \Leftrightarrow \mX \approx \mY$. 
\medskip

We specifically aim to address the following inverse problems, which can be viewed as refinements of the Main Problem, for various subcategories of mm-spaces $\catC$:

\begin{enumerate}
\item[(\globalInj)] \textbf{Global Injectivity.} For $\mX,\mY \in \mathrm{Obj}_\catC$, does $L_\mathrm{h}^\catC(\mX,\mY) = 0$ imply $\mX \approx \mY$? More strongly, does $L_\mathrm{H}^\catC(\mX,\mY) = 0$ imply $\mX \approx \mY$?
\end{enumerate}

\begin{enumerate}
\item[(\homotopy)] \textbf{Homotopy Type Characterization.} For $\mX,\mY \in \mathrm{Obj}_\catC$, does $L_\mathrm{h}^\catC(\mX,\mY) = 0$ (or, more strongly, $L_\mathrm{H}^\catC(\mX,\mY) = 0$) imply that the metric spaces $(X,d_X)$ and $(Y,d_Y)$ are homotopy equivalent?

\item[(\spheres)] \textbf{Sphere Rigidity.} Suppose that $\catC$ includes an object which can be characterized as a unit sphere (for example, the category of Riemannian manifolds of fixed dimension), denoted $\mathcal{S}$. Does $L_\mathrm{h}^\catC(\mX,\mathcal{S}) = 0$ (or, more strongly, $L_\mathrm{H}^\catC(\mX,\mathcal{S}) = 0$) imply $\mX \approx \mathcal{S}$?

\item[(\local)] \textbf{Local Injectivity.} For $\mX \in \mathrm{Obj}_\catC$, does there exist $\epsilon_\mX > 0$ such that for all $\mY \in \mathrm{Obj}_\catC$ with $d_\mathrm{GH}(X,Y) < \epsilon_\mX$, $L_\mathrm{h}^\catC(\mX,\mY) = 0$ (or, more strongly, $L_\mathrm{H}^\catC(\mX,\mY) = 0$) implies $\mX \approx \mY$?
\end{enumerate}

These problems, and several related problems, are treated for embedded and abstract Riemannian manifolds in Section \ref{sec:plane_curves} and for metric graphs in Section \ref{sec:metric_trees}. For many results throughout the paper, we indicate which of these problems (\globalInj)--(\local) to which the result is relevant. Before considering these questions, we provide more context for the pseudometrics $L^\catC_{\mathrm{h},p}$ in the following subsection by showing that they lower bound a Monge-style variant of Gromov-Wasserstein distance.

\subsection{Gromov-Monge Quasi-Metrics}\label{sec:gromov_monge}

To contextualize the category-restricted pseudometrics $L_{\mathrm{H},p}^\catC$ and $L_{\mathrm{h},p}^\catC$ as lower bounds in their own right, we introduce a variant of Gromov-Wasserstein distance which restricts the constraint set to only consider measure-preserving mappings. The \emph{Gromov-Monge $p$-distance} between mm-spaces $\mX$ and $\mY$ is the quantity
\begin{linenomath}\begin{align*}
\dgm(\mX,\mY) &:= \inf_{\phi \in \mT(\mu_X,\mu_Y)} \|\Gamma_{X,Y}\|_{L^p(\mu_\phi \otimes \mu_\phi)} \qquad \mbox{($\mu_\phi$ as in \eqref{eqn:pushforward_coupling})}\\
&=\inf_{\phi \in \mT(\mu_X,\mu_Y)} \left(\iint_{X \times X} \big|d_X(x,x') - d_Y(\phi(x),\phi(x'))\big|^p \mu_X \otimes \mu_X (dx \times dx') \right)^{1/p},
\end{align*}\end{linenomath}
with the understanding that if $\mT(\mu_X,\mu_Y)=\emptyset$ then $\dgm (\mX,\mY):=\infty$. 

\begin{remark}\label{rmk:gromov_wasserstein_bound}
Clearly, the bound $\dgw(\mX,\mY) \leq \dgm(\mX,\mY)$ always holds.
\end{remark}

Beyond their relationship to the pseudometrics $L_{\mathrm{H},p}^\catC$ and $L_{\mathrm{h},p}^\catC$, elucidated below, Gromov-Monge distances may be of independent interest. The aim of many applications in imaging and shape classification is to obtain a \emph{registration} of two objects via a mapping from one object to another. While Gromov-Wasserstein distance defines a metric on $\mM^m$, it is generally realized by a measure coupling which is not of the form \eqref{eqn:pushforward_coupling} and the vital registration map between spaces is not obtained. Beyond the motivation of preferring a matching to a coupling in applications, the Gromov-Monge distance formulation has several other benefits over the Gromov-Wasserstein formulation. For finite spaces $X$ and $Y$, say of cardinality $n$, the computational complexity of evaluating the objective function for Gromov-Monge is lower than that of Gromov-Wasserstein---reducing the complexity from $O(n^3\log(n))$ to $O(n^2)$ in the $p=2$ case  \cite{vayer2019sliced}. At a theoretical level, it was observed in \cite{sturm2012space} that the geodesics (in the sense of metric geometry) of $\mM^m$ with respect to $d_{\mathrm{GW},2}$ have a simpler structure when optimal couplings are realized by measure-preserving maps; i.e., when the distances $d_{\mathrm{GW},2}$ and $d_{\mathrm{GM},2}$ coincide. This leads to the following open question.

\begin{question}\label{ques:optimal_transport_maps}
 For which classes of mm-spaces is it possible to guarantee that $d_{\mathrm{GW},2} = d_{\mathrm{GM},2}$?
\end{question}

\begin{remark}\label{rmk:optimal_transport_maps}
We remark here on progress on and further motivation for this question. The question of realizing an optimal coupling through an optimal mapping was already raised by Sturm in \cite[Challenge 3.6]{sturm2012space}, where it is specified to smooth manifolds. In this setting, an answer to the problem would be a quadratic version of famous results of Brenier \cite{brenier1991polar} and McCann \cite{mccann2001polar} in the classical optimal transport setting. Sturm is able to show that optimal Gromov-Wasserstein couplings between rotationally symmetric distributions in a  Euclidean space are always realized by a transport map which is  unique up to composition with an isometry. 

This question was recently tackled for discrete spaces in \cite{vayer2019sliced}, with a view toward applications in data science. In this work the authors consider the  special category of uniformly weighted $n$-point configurations on the real line. Even in this simple situation, the proof that Gromov-Monge and Gromov-Wasserstein coincide is nontrivial. A proof for the general version of the discrete problem, as stated in \cite[Challenge 5.27]{sturm2012space}, is still open.

It was recently observed empirically in \cite{chowdhury2019gromov} that numerically approximated optimal couplings between discrete spaces tend to at least be relatively sparse, which can be leveraged to implement the geodesic formula in Gromov-Wasserstein space described by Sturm in \cite{sturm2012space}. A theoretical sparsity bound was derived in \cite{chowdhury2020generalized} in a slightly different setting, where heat kernel representations of graphs,  rather than mm-spaces, are compared. A better theoretical understanding of the optimal coupling landscape would guide principled improvements to computation of Gromov-Wasserstein geodesics and barycenters in applications.
\end{remark}

The following basic result clarifies the sense in which $\dgm$ is a ``distance".

\begin{theorem}\label{thm:dgm_is_metric}
For any $p\geq 1$ the function $\dgm$ defines an extended quasi-metric on $\mM^m$.
\end{theorem}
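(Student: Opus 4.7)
The plan is to verify in turn that $\dgm$ is well-defined on isomorphism classes, that it is non-negative with $\dgm(\mX,\mY)=0$ iff $\mX\approx \mY$, and that it satisfies the triangle inequality (allowing the value $\infty$). Symmetry is not required since we only claim a quasi-metric. Throughout I will use two elementary facts about measure-preserving maps: if $\phi\in\mT(\mu_X,\mu_Y)$ and $\psi\in\mT(\mu_Y,\mu_Z)$, then $\psi\circ\phi\in\mT(\mu_X,\mu_Z)$; and for any measurable $F:Y\times Y\to\R$, $\int_{X\times X} F(\phi(x),\phi(x'))\,\mu_X\otimes\mu_X(dx\,dx')=\int_{Y\times Y}F(y,y')\,\mu_Y\otimes\mu_Y(dy\,dy')$ by the change-of-variables formula for pushforwards applied to $\mu_\phi$.

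Well-definedness is immediate: if $\iota:\mX\to\mX'$ is an isomorphism and $\phi\in\mT(\mu_{X'},\mu_Y)$, then $\phi\circ\iota\in\mT(\mu_X,\mu_Y)$ and the distortion integrand is unchanged because $d_X=d_{X'}\circ(\iota,\iota)$; similarly on the right factor. Non-negativity and the fact that $\dgm(\mX,\mX)=0$ (witnessed by $\phi=\mathrm{id}_X$) are clear. For the converse implication, I will invoke Remark \ref{rmk:gromov_wasserstein_bound}, which gives $\dgw(\mX,\mY)\leq \dgm(\mX,\mY)$. If $\dgm(\mX,\mY)=0$ then $\dgw(\mX,\mY)=0$, and since $\dgw$ is already known to be a metric on $\mM^m$ (\cite{memoli2011gromov,sturm2012space}), one concludes $\mX\approx\mY$. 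This sidesteps the only genuinely delicate point, which would otherwise require a compactness/Arzelà–Ascoli argument to extract a limiting isometry from a minimizing sequence of measure-preserving maps.

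For the triangle inequality $\dgm(\mX,\mZ)\leq \dgm(\mX,\mY)+\dgm(\mY,\mZ)$, I treat the infinite cases first: if either summand on the right is $\infty$ (because the relevant set of measure-preserving maps is empty) there is nothing to prove, and otherwise one picks arbitrary $\phi\in\mT(\mu_X,\mu_Y)$ and $\psi\in\mT(\mu_Y,\mu_Z)$ and uses $\psi\circ\phi\in\mT(\mu_X,\mu_Z)$ to ensure $\dgm(\mX,\mZ)<\infty$. For the quantitative bound, apply the pointwise triangle inequality
\[
\bigl|d_X(x,x')-d_Z(\psi\phi(x),\psi\phi(x'))\bigr|\leq \bigl|d_X(x,x')-d_Y(\phi(x),\phi(x'))\bigr|+\bigl|d_Y(\phi(x),\phi(x'))-d_Z(\psi(\phi(x)),\psi(\phi(x')))\bigr|,
\]
followed by Minkowski's inequality in $L^p(\mu_X\otimes\mu_X)$. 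The first resulting term is exactly the Gromov-Monge integrand associated with $\phi$. For the second, the pushforward identity recalled above turns the integral over $X\times X$ against $\mu_X\otimes\mu_X$ into an integral over $Y\times Y$ against $\mu_Y\otimes\mu_Y$, yielding the Gromov-Monge integrand associated with $\psi$. Taking the infimum over $\phi$ and $\psi$ independently gives the triangle inequality.

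The main obstacle, as noted, is the identity of indiscernibles: proving that vanishing $L^p$ distortion forces existence of a true isomorphism. My plan handles this cheaply by citing the corresponding result for $\dgw$ and exploiting the trivial bound $\dgw\leq \dgm$; a self-contained proof would require a limiting argument on measure-preserving maps (essentially repeating the analysis used to establish the $\dgw$-metric property in \cite{memoli2011gromov,sturm2012space}), which is not needed here.
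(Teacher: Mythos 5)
Your proposal is correct and follows essentially the same route as the paper's proof: identity of indiscernibles via the bound $\dgw\leq\dgm$ together with the known metric property of $\dgw$, and the triangle inequality via composition of measure-preserving maps, the pointwise distortion triangle inequality, Minkowski, and the pushforward change of variables. Your explicit treatment of well-definedness on isomorphism classes and of the change-of-variables step for the second term are minor elaborations of points the paper leaves implicit.
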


\begin{proof}
Let $\mX,\mY$ and $\mZ$ be mm-spaces. From the definition of $\dgm $, Remark \ref{rmk:gromov_wasserstein_bound} and the fact that $\dgw$ is a metric on $\mM^m$, we easily see that $\dgm(\mX,\mY) \geq 0$, with $\dgm (\mX,\mY) = 0$ if and only if $\mX \approx \mY$. 

It only remains to show that $\dgm $ satisfies the triangle inequality $\dgm (\mX,\mZ) \leq \dgm (\mX,\mY) + \dgm (\mY,\mZ)$. If $\dgm(\mX,\mZ) = \infty$, then $\mT(\mu_X,\mu_Z) = \emptyset$ and it follows that either $\mT(\mu_X, \mu_Y) = \emptyset$ or $\mT(\mu_Y, \mu_Z) = \emptyset$, whence one of $\dgm(\mX,\mY)$ or $\dgm(\mY,\mZ)$ is infinity. The triangle inequality therefore holds in this case. The triangle inequality follows trivially if $\dgm(\mX,\mY)$ or $\dgm(\mY,\mZ)$ is infinite, so let us assume that all distances are finite. In this case we have
\begin{linenomath}\begin{align}
\dgm (\mX,\mZ) &= \inf_{\phi \in \mathcal{T}(\mu_X,\mu_Y)} \|\Gamma_{\mX,\mZ}\|_{L^p(\mu_\phi \otimes \mu_\phi)} \leq \inf_{\substack{\phi_1 \in \mathcal{T}(\mu_X,\mu_Y)\\ \phi_2 \in \mathcal{T}(\mu_Y,\mu_Z)}} \{ \|\Gamma_{\mX,\mZ}\|_{L^p(\mu_\phi \otimes \mu_\phi)} \mid \phi = \phi_2 \circ \phi_1\} \nonumber \\
&\leq \inf_{\substack{\phi_1 \in \mathcal{T}(\mu_X,\mu_Y)\\ \phi_2 \in \mathcal{T}(\mu_Y,\mu_Z)}} \left(\|\Gamma_{\mX,\mY}\|_{L^p(\mu_{\phi_1} \otimes \mu_{\phi_1})} + \|\Gamma_{\mY,\mZ}\|_{L^p(\mu_{\phi_2} \otimes \mu_{\phi_2})} \right) \label{eqn:dist_metric_1} \\
&= \dgm(\mX,\mY) + \dgm(\mY,\mZ) \nonumber.
\end{align}\end{linenomath}
The estimate (\ref{eqn:dist_metric_1})  follows from the fact that 
\begin{linenomath}\begin{equation*}
\|\Gamma_{\mX,\mZ}\|_{L^p(\mu_\phi \otimes \mu_\phi)} \leq \|\Gamma_{\mX,\mY}\|_{L^p(\mu_{\phi_1} \otimes \mu_{\phi_1})} + \|\Gamma_{\mY,\mZ}\|_{L^p(\mu_{\phi_2} \otimes \mu_{\phi_2})} 
\end{equation*}\end{linenomath} 
for any fixed $\phi=\phi_2 \circ \phi_1$ with $\phi_1 \in \mathcal{T}(\mu_X,\mu_Y)$ and $\phi_2 \in \mathcal{T}(\mu_Y,\mu_Z)$. This holds by the definition of $\mu_\phi$, the Minkowski inequality and the fact that 
\begin{linenomath}\begin{equation*}
\Gamma_{\mX,\mZ}(x,z,x',z') \leq \Gamma_{\mX,\mY}(x,y,x',y') + \Gamma_{\mY,\mZ}(y,z,y',z')
\end{equation*}\end{linenomath}
for all $x,x',y,y',z,z'$.
\end{proof}

\begin{example}[$\dgm$ is Not Symmetric] \label{exmp:asymmetry}
Let $X=\{u,v\}$ be endowed with empirical measure $\delta_u = \delta_v = \frac{1}{2}$ and  metric determined by $d_X(u,v)=1$. Let $Y=\{y\}$ with measure $\delta_y = 1$. Then the constant map $X \rightarrow Y$ is measure-preserving, but neither of the two possible maps $Y \rightarrow X$ preserves measure (we cannot split mass). Thus $\dgm(\mX,\mY)=1/2^{1/p}$ while $\dgm(\mX,\mY) =\infty$
\end{example}

\begin{remark}
Additional theoretical properties of Gromov-Monge distances will be explored in a forthcoming supplementary paper \cite{supplementary}. For now, we are mainly interested in Gromov-Monge distances because they contextualize the category-specific inverse problems of Section \ref{sec:inverse_problems}, as we explain below.
\end{remark}

\subsubsection*{Restricting to Subcategories of Metric Measure Spaces}

A benefit of using mappings rather than couplings in the definition of the Gromov-Monge $p$-distances is that it makes the definition amenable to restricting to various convenient subclasses of spaces and maps. For a given subcategory $\catC$ of mm-spaces, let $d_\mathrm{GM,p}^\catC:\mathrm{Obj}_\catC \times \mathrm{Obj}_\catC \rightarrow \R$ denote the function
\begin{linenomath}\begin{equation*}
\dgm^\catC(\mX,\mY) := \inf_{\phi \in \mathrm{Mor}_\catC(\mu_X,\mu_Y)} \left(\iint_{X \times X} \big|d_X(x,x') - d_Y(\phi(x),\phi(x'))\big|^p \mu_X \otimes \mu_X (dx \times dx') \right)^{1/p}.
\end{equation*}\end{linenomath}

The following gives a refinement of Proposition \ref{prop:gw_lower_bound_hierarchy} to the setting of subcategories of mm-spaces. 

\begin{proposition}\label{prop:gw_lower_bound_hierarchy_GM}
For $p \geq 1$ and mm-spaces $\mX$ and $\mY$,
\begin{linenomath}\begin{equation*}
L_{\mathrm{H},p}^\catC(\mX,\mY) \leq L_{\mathrm{h},p}^\catC(\mX,\mY) \leq d_{\mathrm{GM},p}^\catC(\mX,\mY).
\end{equation*}\end{linenomath}
\end{proposition}

\begin{proof}
The righthand inequality follows by adapting the proof of \cite[Corollary 6.3]{memoli2011gromov}, replacing couplings by transport maps as necessary. The lefthand inequality follows from Proposition \ref{prop:gw_lower_bound_hierarchy} and the observations that $L_{\mathrm{h},p}^{\mathrm{K}}(\mX,\mY) \leq L_{\mathrm{h},p}^\catC(\mX,\mY)$ and $L_{\mathrm{H},p}^\catC(\mX,\mY) = L_{\mathrm{H},p}(\mX,\mY)$.
\end{proof}

\section{Characterization Results for  Manifolds}\label{sec:plane_curves}

We now return to the main goal of the paper, which is to address the inverse problems (\globalInj)--(\local) laid out in Section \ref{sec:inverse_problems}, beginning with embedded manifolds and later moving on to abstract manifolds and metric graphs.

We begin by reviewing a result on arguably the simplest subclass of mm-spaces: point clouds in Euclidean space. Let $\catC_{N,k}$ denote the full subcategory of mm-spaces whose objects are isomorphism classes of mm-spaces $\mathcal{X}=(X,d_X,\mu_X)$ such that $X$ is a set of $N$ points in $\R^k$, $d_X$ is Euclidean distance and $\mu_X$ is uniform measure. The problem of reconstructing $\mX$ from its collection of interpoint distances (i.e., from its distance distribution $H_\mX$) is classical and has applications to DNA sequencing and X-ray crystallography \cite{lemke2003reconstructing}.

It is well known that $L_\mathrm{H}^{\catC_{N,k}}$ does not distinguish elements of $\catC_{N,k}$ for any $(N,k)$ with $N > 3$. Example \ref{exmp:integer_sets} gave an example of nonisomorphic mm-spaces $\mX,\mY$ in $\catC_{1,6}$ with $L_\mathrm{H}^{\catC_{1,6}}(\mX,\mY) = 0$. Counterexamples in higher dimensions were given by Boutin and Kemper \cite[Section 1.1]{boutin2004reconstructing}, who refined the question and proved the following theorem (which we state using our terminology). 

\begin{theorem}[\cite{boutin2004reconstructing}]\label{thm:boutin_kemper}
The pseudometric $L_\mathrm{H}^{\catC_{N,k}}$ is locally injective on $\catC_{N,k}$. That is, for every point cloud $\mX$, there exists $\epsilon_\mX > 0$ such that if a point cloud $\mY$ is $\epsilon_\mX$-close to $\mX$ in Hausdorff distance and $L_\mathrm{H}^{\catC_{N,k}}(\mX,\mY)=0$ then $\mX$ and $\mY$ differ by a rigid motion.
\end{theorem}

This result gives an affirmative answer for the category of Euclidean point clouds to the local injectivity question (\local) posed in Section \ref{sec:inverse_problems}. We remark that (\local) is formulated using Gromov-Hausdorff distance, while Theorem \ref{thm:boutin_kemper} uses Hausdorff distance. This can be reconciled via \cite[Theorem 2]{memoli2008gromov}, which precisely relates an isometry-invariant variant of Hausdorff distance to Gromov-Hausdorff distance between Euclidean point clouds.

\subsection{Plane Curves and the Curve Histogram Conjecture}

We now move on to study the distinguishing power of distance distribution invariants in subcategories of manifolds. We begin by studying 1-dimensional manifolds embedded in the plane.

\subsubsection*{The Curve Histogram Conjecture}

In \cite{brinkman2012invariant}, Brinkman and Olver consider the category of mm-spaces $\mX=(X,d_X,\mu_X)$ with $X$ a plane curve, $d_X$ extrinsic Euclidean distance between points, and $\mu_X$ normalized arclength measure along the curve. They study global distance distributions (which they refer to as \emph{distance histograms}) for elements of this class, where they obtain convergence results for discrete approximations of plane curves and pose the following conjecture.

\begin{conjecture}[The Curve Histogram Conjecture \cite{brinkman2012invariant}]\label{con:curve_histogram}
The distance histogram $H_\mX$ determines a regular plane curve up to isometry.
\end{conjecture}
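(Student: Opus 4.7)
The conjecture as stated predicts uniqueness, and the plan is to \emph{disprove} it by exhibiting an explicit pair of non-congruent planar curves with identical global distance distributions, as suggested by the counterexample depicted in Figure~\ref{fig:curve_histogram_conjecture_intro}. The strategy is to adapt the classical Mallows-Clarke construction of \emph{homometric} planar point sets (two sets with identical interpoint-distance multisets but not related by any Euclidean isometry) from point clouds to polygonal curves in such a way that the \emph{continuum} distance distribution, not merely a vertex-distance multiset, is preserved.

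Concretely, take $n = 16$ and the homometric pair $A = \{1,9,11,15\}$ and $B = \{1,3,9,15\}$ in $\mathbb{Z}/16$, both of which produce the cyclic difference multiset $\{2,2,4,6,6,8\}$ and which are inequivalent under the dihedral action on $\mathbb{Z}/16$. Fix a radius $\rho > 1$. Place $8$ ``standard'' vertices on the unit circle at angles $\{0, 2, 4, \dots, 14\}\cdot 2\pi/16$, and for each $j \in A$ (respectively $j \in B$) place one additional ``bumped'' vertex of radius $\rho$ at angle $2\pi j / 16$. Connect consecutive vertices in cyclic order by straight segments to obtain the polygonal curves $\mX_A$ and $\mX_B$, endowed with the Euclidean metric and normalized arclength measure. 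These are exactly the curves of Figure~\ref{fig:curve_histogram_conjecture_intro} for $\rho = 1.4$.

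The core step is to prove $H_{\mX_A}(r) = H_{\mX_B}(r)$ for every $r$. Decomposing the double integral defining $H$ over pairs of edges,
\begin{equation*}
H_\mX(r) \;=\; \frac{1}{L^2} \sum_{E,E'} \iint_{E\times E'} \mathbf{1}_{\{\norm{x-y}\leq r\}} \, d\sigma(x)\, d\sigma(y),
\end{equation*}
each edge $E$ is a chord of the ambient disk joining two marked angular positions with radii in $\{1,\rho\}$, so the contribution of a pair $(E,E')$ depends only on the combinatorial tuple of endpoint radii together with the four cyclic indices modulo $16$. The homometry of $A$ and $B$, together with the natural $\mathbb{Z}/16$ symmetry of the angular grid, is then leveraged to show that the multiset of such combinatorial tuples occurring in $\mX_A$ is identical to that occurring in $\mX_B$, forcing equality of the edge-pair integrals under an appropriate bijective pairing. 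The main obstacle is precisely this bookkeeping step: one must verify that each ``skipped'' odd position in one configuration corresponds to a matching pattern in the other, and that the contributions from edges which \emph{span} a skipped position (the longer $45^{\circ}$ chords) pair up correctly with the corresponding bumped-vertex edges on the other side. The homometric identity of cyclic difference multisets is exactly what makes this pairing possible, and this is where the adaptation from the Mallows-Clarke point-set argument requires genuine new work.

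Non-congruence of $\mX_A$ and $\mX_B$ then follows because any planar Euclidean isometry mapping one curve onto the other must permute the discrete set of radius-$\rho$ vertices, hence must descend to a dihedral symmetry of $\mathbb{Z}/16$ carrying $A$ to $B$; but by construction no such symmetry exists. For the refinement claiming that the counterexample can be made arbitrarily close to the unit circle in Hausdorff distance, simply take $\rho = 1 + \delta$ for small $\delta > 0$: the construction and the homometry-based equality of distance distributions apply verbatim for every such $\rho$, while the entire curve sits inside the annulus $1 \leq \norm{z} \leq 1 + \delta$ and is therefore within Hausdorff distance $\delta$ of the unit circle.
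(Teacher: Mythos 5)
Your overall strategy is the same as the paper's: the curves of Figure \ref{fig:curve_histogram_conjecture_intro} are exactly the Mallows--Clarke polygons, the relevant combinatorial input is that the two sets of octagon edges carrying appended triangles are homometric (equal cyclic difference multisets) without being dihedrally equivalent, and non-congruence follows because an isometry would have to induce a dihedral symmetry carrying one bump-set to the other. However, you explicitly defer the one step that actually constitutes the proof. The paper closes it with Theorem \ref{thm:main_theorem}: decompose $H_\mX$ as $\sum_{i,j} H_\mX^{i,j}$ over a partition into the flat pieces $S_i$ and triangular pieces $T_i$, and then exhibit (in Proposition \ref{prop:curve_histogram_conjecture_false}) an explicit bijection between \emph{pairs of pieces} of the two curves in which matched pairs are related by a rigid motion. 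The reason homometry suffices here is that the pieces sit over the edges of a common regular octagon, the dihedral group acts transitively on unordered edge-pairs at each fixed cyclic offset, and the count of $(T,T)$ (hence also $(S,T)$ and $(S,S)$) piece-pairs at each offset is exactly the difference-multiset datum. Your formulation in terms of ``combinatorial tuples of four cyclic indices'' is headed in this direction, but as written you only invoke equality of the \emph{pairwise} difference multisets while the quantity to be matched is indexed by pairs of edges (four endpoints); without the reduction just described, or the paper's explicit matching table, the claim $H_{\mX_A}=H_{\mX_B}$ is asserted rather than proved.

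The final refinement is wrong as stated. Taking $\rho = 1+\delta$ does \emph{not} place the curve in the annulus $1\le\|z\|\le 1+\delta$: the flat edges are chords of the inscribed octagon, whose interiors lie at distance $1-\cos(\pi/8)\approx 0.076$ inside the unit circle, so the Hausdorff distance to $\mathbb{S}^1$ is bounded below independently of $\rho$. To get counterexamples $\epsilon$-close to the circle one must start from a regular $2n$-gon with $n$ large (and then choose a non-dihedrally-equivalent homometric pair of edge subsets, whose existence is the content of \cite{garcia2016pairs}), which is how Corollary \ref{coro:pessimistic} proceeds.
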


In \cite{brinkman2012invariant}, the class of \emph{regular} plane curves includes both polygons and smooth curves---see \cite{brinkman2012invariant} for details. Using the terminology of this paper, the Curve Histogram Conjecture says that $L_\mathrm{H}^\catC$ induces a metric on the full subcategory $\catC$ of isomorphism classes of regular plane curves, which would provide an affirmative answer to question (\globalInj) from Section \ref{sec:inverse_problems}.

Consider the curves shown in Figure \ref{fig:curve_histogram_conjecture}. Each curve is obtained as the boundary of a polygonal region, each constructed by appending four congruent isosceles triangles to a regular octagon. These curves are clearly non-isometric and will serve as our counterexample to the Curve Histogram Conjecture. 

\begin{remark}
These curves were first used by Mallows and Clark in \cite{mallows1970linear} as a counterexample to a conjecture of Blaschke that the distribution of lengths of chords intersecting a convex polygon determines the polygon up to isometry (see \cite{santalo1953introduction})---note that one can make the curves in Figure \ref{fig:curve_histogram_conjecture} convex by taking the heights of the appended triangles to be sufficiently small. The distribution of chord lengths for a convex body is determined by its distribution of distances \cite{piefke1978beziehungen, vlasov2007signed}. In \cite{garcia2016pairs}, Garc\'{i}a-Pelayo shows that the convex bodies bounded by the polygons in Figure \ref{fig:curve_histogram_conjecture} have the same distance distributions, with a view toward constructing other counterexamples to the Blaschke conjecture. It may be that the distance distributions of the boundary curves can be obtained from those of the convex bodies via a tube formula as in \cite{gray2003tubes}, but we were unable to work out a precise connection. We instead give a direct proof using a general procedure for comparing distance distributions of arbitrary mm-spaces.
\end{remark}

\begin{figure}
\begin{center}
\begin{tikzpicture}[scale=1.5, transform shape]
\draw[gray!10, thick] (1,0) -- (.707,.707);
\draw[black, thick] (1,0) -- (0.924*1.4,0.383*1.4);
\draw[black, thick] (0.924*1.4,0.383*1.4) -- (.707,.707);
\draw[black, thick, dotted] (0.924*0.93,0.383*0.93) -- (0.924*1.4,0.383*1.4) node[above] {\tiny $T_3$};
\draw[black, thick] (.707,.707) -- (0,1) node[midway, label={[xshift=0.1cm, yshift=-0.2cm] \tiny $S_1$}] {};
\draw[black, thick] (0,1) -- (-.707,.707) node[midway, label={[xshift=-0.2cm, yshift=-0.21cm] \tiny $S_2$}] {};
\draw[black, thick] (-.707,.707) -- (-1,0) node[midway, label={[xshift=-0.22cm, yshift=-0.27cm] \tiny $S_3$}] {};
\draw[gray!10, thick] (-1,0) -- (-.707,-.707);
\draw[black, thick] (-1,0) -- (-0.924*1.4,-0.383*1.4);
\draw[black, thick] (-0.924*1.4,-0.383*1.4) -- (-.707,-.707);
\draw[black, thick, dotted] (-0.924*0.93,-0.383*0.93) -- (-0.924*1.4,-0.383*1.4) node[below] {\tiny $T_4$};
\draw[gray!10, thick] (-.707,-.707) -- (0,-1);
\draw[black, thick] (0,-1) -- (-0.383*1.4,-0.924*1.4);
\draw[black, thick] (-0.383*1.4,-0.924*1.4) -- (-.707,-.707);
\draw[black, thick, dotted] (-0.383*0.93,-0.924*0.93) -- (-0.383*1.4,-0.924*1.4) node[below] {\tiny $T_1$};
\draw[black, thick] (0,-1) -- (.707,-.707) node[midway, label={[xshift=0.1cm, yshift=-0.6cm] \tiny $S_4$}] {};
\draw[gray!10, thick] (.707,-.707) -- (1,0);
\draw[black, thick] (1,0) -- (0.924*1.4,-0.383*1.4);
\draw[black, thick] (0.924*1.4,-0.383*1.4) -- (.707,-.707);
\draw[black, thick, dotted] (0.924*0.93,-0.383*0.93) -- (0.924*1.4,-0.383*1.4) node[below] {\tiny $T_2$};
\end{tikzpicture} \hspace{0.2in} \begin{tikzpicture}[scale=1.5, transform shape]
\draw[gray!10, thick] (1,0) -- (.707,.707);
\draw[black, thick] (1,0) -- (0.924*1.4,0.383*1.4);
\draw[black, thick] (0.924*1.4,0.383*1.4) -- (.707,.707);
\draw[black, thick, dotted] (0.924*0.93,0.383*0.93) -- (0.924*1.4,0.383*1.4) node[above] {\tiny $T_3'$};
\draw[gray!10, thick] (.707,.707) -- (0,1);
\draw[black, thick] (0,1) -- (0.383*1.4,0.924*1.4);
\draw[black, thick] (0.383*1.4,0.924*1.4) -- (.707,.707);
\draw[black, thick, dotted] (0.383*0.93,0.924*0.93) -- (0.383*1.4,0.924*1.4) node[above] {\tiny $T_1'$};
\draw[black, thick] (0,1) -- (-.707,.707) node[midway, label={[xshift=-0.2cm, yshift=-0.21cm] \tiny $S_2'$}] {};
\draw[black, thick] (-.707,.707) -- (-1,0) node[midway, label={[xshift=-0.22cm, yshift=-0.27cm] \tiny $S_3'$}] {};
\draw[gray!10, thick] (-1,0) -- (-.707,-.707);
\draw[black, thick] (-1,0) -- (-0.924*1.4,-0.383*1.4);
\draw[black, thick] (-0.924*1.4,-0.383*1.4) -- (-.707,-.707);
\draw[black, thick, dotted] (-0.924*0.93,-0.383*0.93) -- (-0.924*1.4,-0.383*1.4) node[below] {\tiny $T_4'$};
\draw[black, thick] (-.707,-.707) -- (0,-1) node[midway, label={[xshift=-0.1cm, yshift=-0.6cm] \tiny $S_1'$}] {};
\draw[black, thick] (0,-1) -- (.707,-.707) node[midway, label={[xshift=0.1cm, yshift=-0.6cm] \tiny $S_4'$}] {};
\draw[gray!10, thick] (.707,-.707) -- (1,0);
\draw[black, thick] (1,0) -- (0.924*1.4,-0.383*1.4);
\draw[black, thick] (0.924*1.4,-0.383*1.4) -- (.707,-.707);
\draw[black, thick, dotted] (0.924*0.93,-0.383*0.93) -- (0.924*1.4,-0.383*1.4) node[below] {\tiny $T_2'$};
\end{tikzpicture}
\end{center}
\caption{A counterexample to Conjecture \ref{con:curve_histogram}.}\label{fig:curve_histogram_conjecture}
\end{figure}

To prove that the curves give a counterexample to the Curve Histogram Conjecture, we will derive a simple sufficient condition for mm-spaces to have the same global distance distribution. For a mm-space $\mathcal{X}$, let $X=X_1 \cup X_2 \cup \cdots \cup X_n$ be a partition into measurable subsets. For each $i,j\in\{1,\ldots,n\}$  let $H_\mX^{i,j}$ be the function defined by
\begin{linenomath}\begin{equation*}
H_\mX^{i,j}(r) = \mu_X \otimes \mu_X \left(\{(x,x') \in X_i \times X_j \mid d_X(x,x') \leq r \}\right).
\end{equation*}\end{linenomath}
Denoting by $\mathbb{1}_r:X \times X \rightarrow \R$ the indicator function for the set $\{(x,x') \in X \times X \mid d_X(x,x') \leq r\}$,
we have
\begin{linenomath}\begin{align*}
H_{\mX}(r) &= \int_{X \times X} \mathbb{1}_r(x,x') \mu_X \otimes \mu_X (dx \times dx') \\
&= \sum_{i,j=1}^n \int_{X_i \times X_j} \mathbb{1}_r|_{X_i \times X_j}(x,x') \mu_X \otimes \mu_X (dx \times dx') = \sum_{i,j=1}^n H_{\mathcal{X}}^{i,j}(r).
\end{align*}\end{linenomath}
The next theorem then follows immediately.

\begin{theorem}\label{thm:main_theorem}
Let $\mX$ and $\mathcal{Y}$ be mm-spaces. If there exist measurable partitions $X=X_1 \cup X_2 \cup \cdots \cup X_n$ and $Y=Y_1 \cup Y_2 \cup \cdots \cup Y_n$ such that for each pair $(X_i,X_j)$, there is a pair $(Y_k,Y_\ell)$ with $H_\mX^{i,j} = H_\mY^{k,\ell}$, then $H_\mX = H_\mY$. 
\end{theorem}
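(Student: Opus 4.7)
The plan is to observe that the theorem is an essentially immediate consequence of the decomposition identity $H_{\mX}(r) = \sum_{i,j=1}^n H_{\mX}^{i,j}(r)$ that has already been derived in the paragraph preceding the theorem, together with its analogue for $\mY$ given by the partition $Y = Y_1 \cup \cdots \cup Y_n$. So the first step is simply to record both decompositions:
\[
H_{\mX}(r) = \sum_{i,j=1}^n H_{\mX}^{i,j}(r), \qquad H_{\mY}(r) = \sum_{k,\ell=1}^n H_{\mY}^{k,\ell}(r).
\]

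The substantive content is then the pairing hypothesis. I would read the statement as asserting that there is a bijection $\sigma:\{1,\ldots,n\}^2 \to \{1,\ldots,n\}^2$ such that $H_{\mX}^{i,j} = H_{\mY}^{\sigma(i,j)}$ for every $(i,j)$; this is the only interpretation under which the conclusion can actually hold, since otherwise one could double-count a pair on the $\mY$ side while missing another (a short two-block example shows the weaker one-sided ``for each there exists'' version is insufficient). Under this bijective pairing, reindexing the sum for $H_{\mY}$ using $\sigma$ gives
\[
H_{\mY}(r) = \sum_{(i,j)} H_{\mY}^{\sigma(i,j)}(r) = \sum_{(i,j)} H_{\mX}^{i,j}(r) = H_{\mX}(r),
\]
which is the desired equality, valid for every $r \geq 0$.

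There is no real obstacle to this argument beyond the bookkeeping noted above; the only thing worth flagging (and which I would flag in the paper itself) is the precise meaning of ``for each pair $(X_i,X_j)$ there is a pair $(Y_k,Y_\ell)$''. In the forthcoming application to the polygonal counterexample in Figure \ref{fig:curve_histogram_conjecture}, the pairing will be concretely produced by matching the labeled sides $S_i \leftrightarrow S_i'$ and $T_j \leftrightarrow T_j'$, so the bijective version of the hypothesis will be transparently satisfied and the theorem will apply directly to give $H_{\mX} = H_{\mY}$ with $\mX \not\approx \mY$.
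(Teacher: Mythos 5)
Your proof is correct and is essentially the paper's own argument: the displayed decomposition $H_{\mX}=\sum_{i,j}H_{\mX}^{i,j}$ is derived immediately before the theorem statement, and the theorem then "follows immediately" by reindexing the sum for $\mY$. Your remark that the hypothesis must be read as a bijective matching of pairs is a fair and worthwhile clarification, and it is exactly how the theorem is applied in Proposition \ref{prop:curve_histogram_conjecture_false}, where an explicit bijection between pairs of pieces is exhibited.
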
 

\begin{proposition}[Noninjectivity of Global Distance Distributions for Plane Curves (\globalInj)]\label{prop:curve_histogram_conjecture_false}
The curves in Figure \ref{fig:curve_histogram_conjecture} are not isomorphic, but satisfy the hypotheses of Theorem \ref{thm:main_theorem} and therefore have the same distance distribution. 
\end{proposition}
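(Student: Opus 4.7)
The plan is to apply Theorem \ref{thm:main_theorem} using the partition of each curve into the eight natural pieces arising from its octagonal scaffolding. For the left curve $\mX$, I will index the pieces cyclically as $X_1,\ldots,X_8$ (as measurable arc segments, with endpoints contributing measure zero), placing isosceles-triangle bumps (type $T$) at the positions $\mathcal{T}_{\mX} := \{1,5,6,8\}$ and unmodified octagon sides (type $S$) at positions $\mathcal{S}_{\mX} := \{2,3,4,7\}$, matching the labeling of Figure \ref{fig:curve_histogram_conjecture}. Similarly, I will partition $\mY$ into $Y_1,\ldots,Y_8$ with $\mathcal{T}_{\mY} = \{1,2,5,8\}$ and $\mathcal{S}_{\mY} = \{3,4,6,7\}$. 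By construction, every $S$-piece is a side of the same regular octagon and every $T$-piece is a pair of legs of the same isosceles triangle, so pieces of matching type across the two curves are mutually congruent and carry equal arclength measure.

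The crux of the proof is the following symmetry claim: if $\{\mathrm{type}(X_i),\mathrm{type}(X_j)\} = \{\mathrm{type}(Y_k),\mathrm{type}(Y_\ell)\}$ as unordered pairs and the cyclic separations $\sigma(i,j) := \min(|i-j|,\,8-|i-j|)$ and $\sigma(k,\ell)$ coincide, then $H^{i,j}_{\mX} = H^{k,\ell}_{\mY}$. To prove this I will invoke the dihedral group $D_8$ of the regular octagon, which acts transitively on unordered pairs of sides at each fixed cyclic separation $s\in\{0,1,2,3,4\}$. Because all $S$-pieces are identical octagon sides, all $T$-pieces are congruent outward-pointing isosceles bumps (hence equivariant under $D_8$), and each individual piece is invariant under the reflection across the perpendicular bisector of its underlying octagon side, an appropriate element of $D_8$ produces a rigid motion of the plane carrying $X_i \cup X_j$ exactly onto $Y_k \cup Y_\ell$. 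Since the joint distance distribution depends only on the isometry class of the union of pieces, the claim follows.

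Given the symmetry claim, the verification of the hypothesis of Theorem \ref{thm:main_theorem} reduces to a finite combinatorial check: the multisets of triples $(\{\mathrm{type}(i),\mathrm{type}(j)\},\, \sigma(i,j))$ over all index pairs must agree for the two curves. Using the sets $\mathcal{T}_{\mX},\mathcal{S}_{\mX},\mathcal{T}_{\mY},\mathcal{S}_{\mY}$, direct enumeration shows that in both curves the off-diagonal pair counts $(\#TT,\#SS,\#ST)$ stratified by separation $s\in\{1,2,3,4\}$ take the identical values
\[
s=1:\;(2,2,4), \quad s=2:\;(1,1,6), \quad s=3:\;(2,2,4), \quad s=4:\;(1,1,2),
\]
while the diagonal ($s=0$) contributes four $T$-pieces and four $S$-pieces in each curve. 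This produces the required pairwise bijection of joint distance distributions, so Theorem \ref{thm:main_theorem} yields $H_{\mX} = H_{\mY}$.

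The main obstacle is the symmetry claim itself: one must verify carefully that the $D_8$-action on octagon sides lifts to a genuine rigid motion of the ambient plane that simultaneously identifies the corresponding pairs of modified pieces across the two curves. This uses both that the bumps point outward (hence are $D_8$-equivariant around the octagon's center) and that each piece individually admits the local perpendicular-bisector reflection needed to match pairs having the same separation but opposite cyclic orientation.
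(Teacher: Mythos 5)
Your proof is correct and takes essentially the same approach as the paper: the same eight-piece partition into bumps and flat sides is fed into Theorem \ref{thm:main_theorem}, with matched pairs of pieces identified by rigid motions of the underlying regular octagon. The only difference is organizational --- where the paper exhibits an explicit table of matched pairs, you derive the matching from the invariant consisting of the unordered type pair together with the cyclic separation, using transitivity of the $D_8$-action and a count of pairs in each class; this is a somewhat more systematic verification (and your separation counts check out), which also transfers directly to the $2n$-gon construction used in Corollary \ref{coro:pessimistic}.
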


\begin{proof}
Denote the curve on the left by $X$ and the curve on the right by $Y$. We partition each curve into triangular pieces $T_i$ (respectively, $T_i'$) and straight pieces $S_j$ (respectively, $S_j'$) according to the figure. For each pair $(X_i,X_j)$ of pieces $X$ (here $X_j$ stands as a placeholder for either an $S_j$ or a $T_j$), we find a corresponding pair $(Y_k,Y_\ell)$ in $Y$ so that there is a rigid motion taking the first pair onto the second. Since the functions $H_\mathcal{X}^{i,j}$ and $H_\mY^{k,\ell}$ are invariant under rigid isometries, the hypotheses of Theorem \ref{thm:main_theorem} are then satisfied. 

Pairs of the form $(S_i,S_i)$ and $(T_i,T_i)$ are matched with $(S_i',S_i')$ and $(T_i',T_i')$, respectively. Moreover, note that curve $Y$ is obtained form curve $X$ by ``swapping" $S_1$ with $T_1$, it is clear that $(S_1,T_1)$ should be matched with $(S_1',T_1')$. It also follows that we have obvious matchings $(S_i,S_j) \leftrightarrow (S_i',S_j')$, $(T_i,T_j) \leftrightarrow (T_i',T_j')$ and $(S_i,T_j) \leftrightarrow (S_i',T_j')$ for $i,j \neq 1$.  It remains to show that the desired  matchings exist for pairs which include $S_1$ or $T_1$. These are given by

\begin{center}
\begin{tabular}{ c c c c }
$(S_1,S_2) \leftrightarrow (S_1',S_4')$ & $(S_1,T_2)\leftrightarrow (S_1',T_2')$ & $(T_1,T_2) \leftrightarrow (T_1',T_2')$ & $(T_1,S_2) \leftrightarrow (T_1',S_4')$\\ 
$(S_1,S_3) \leftrightarrow (S_1',S_3')$ & $(S_1,T_3)\leftrightarrow (S_1',T_4')$ & $(T_1,T_3) \leftrightarrow (T_1',T_4')$ &$(T_1,S_3) \leftrightarrow (T_1',S_3')$ \\  
$(S_1,S_4)\leftrightarrow (S_1',S_2')$ &$(S_1,T_4)\leftrightarrow (S_1',T_3')$ & $(T_1,T_4) \leftrightarrow (T_1',T_3')$ &$(T_1,S_4) \leftrightarrow (T_1',S_2').$   \\
\end{tabular}
\end{center}
\end{proof}

\begin{remark}
Consider the point $y$ at the tip of pyramid $T_1'$ in curve $Y$ on the right hand side of Figure \ref{fig:curve_histogram_conjecture}. It is easy to see that the local distance distribution $h_\mathcal{Y}(y,\cdot)$ is different from the local distribution $h_\mathcal{X}(x,\cdot)$ of any point $x$ in the curve $X$ on the left hand side of Figure \ref{fig:curve_histogram_conjecture}. This can be used to show that these curves \emph{are} distinguished by the lower bound $L^{\mathrm{K}}_{\mathrm{h}}$. We show below in Proposition \ref{prop:local_distribution_curves} that, in an appropriately restricted category, plane curves are always distinguished by local distributions. 
\end{remark}

Pushing this construction further, we now give an example demonstrating a lack of distinguishing power of $L_\mathrm{H}$ locally (cf.\ the main result of \cite{bauer2013local} demonstrating injectivity in a neighborhood of the unit circle for the \emph{circular integral invariant} for plane curves).

\begin{example}\label{exmp:pessimistic}
For any $\epsilon > 0$, we can construct a pair of simple closed planar curves $\mX_1$ and $\mX_2$ such that the Hausdorff distance between the unit circle $\mathbb{S}^1$ and each $X_j$ is less than $\epsilon$, $X_1$ is not isometric to $X_2$, and $H_{\mX_1} = H_{\mX_2}$.  Begin by circumscribing $\mathbb{S}^1$ by a regular octagon. Partition the points of tangency into two groups according to the edge partitions used in Proposition \ref{prop:curve_histogram_conjecture_false}. Construct one curve $X_1$ by adding a small, symmetric bump at each point in one of the partition groups, and construct $X_2$ by adding bumps to the complementary points of tangency (see Figure \ref{fig:pessimistic}). It follows from Theorem \ref{thm:main_theorem} and the proof of Proposition \ref{prop:curve_histogram_conjecture_false} that $H_{\mX_1} = H_{\mX_2}$ and taking the bumps to be sufficiently small yields Hausdorff distance from $\mathbb{S}^1$ to $X_j$ less than $\epsilon$.
\end{example}

\begin{figure}
\begin{center}
\includegraphics[width = 0.4\textwidth]{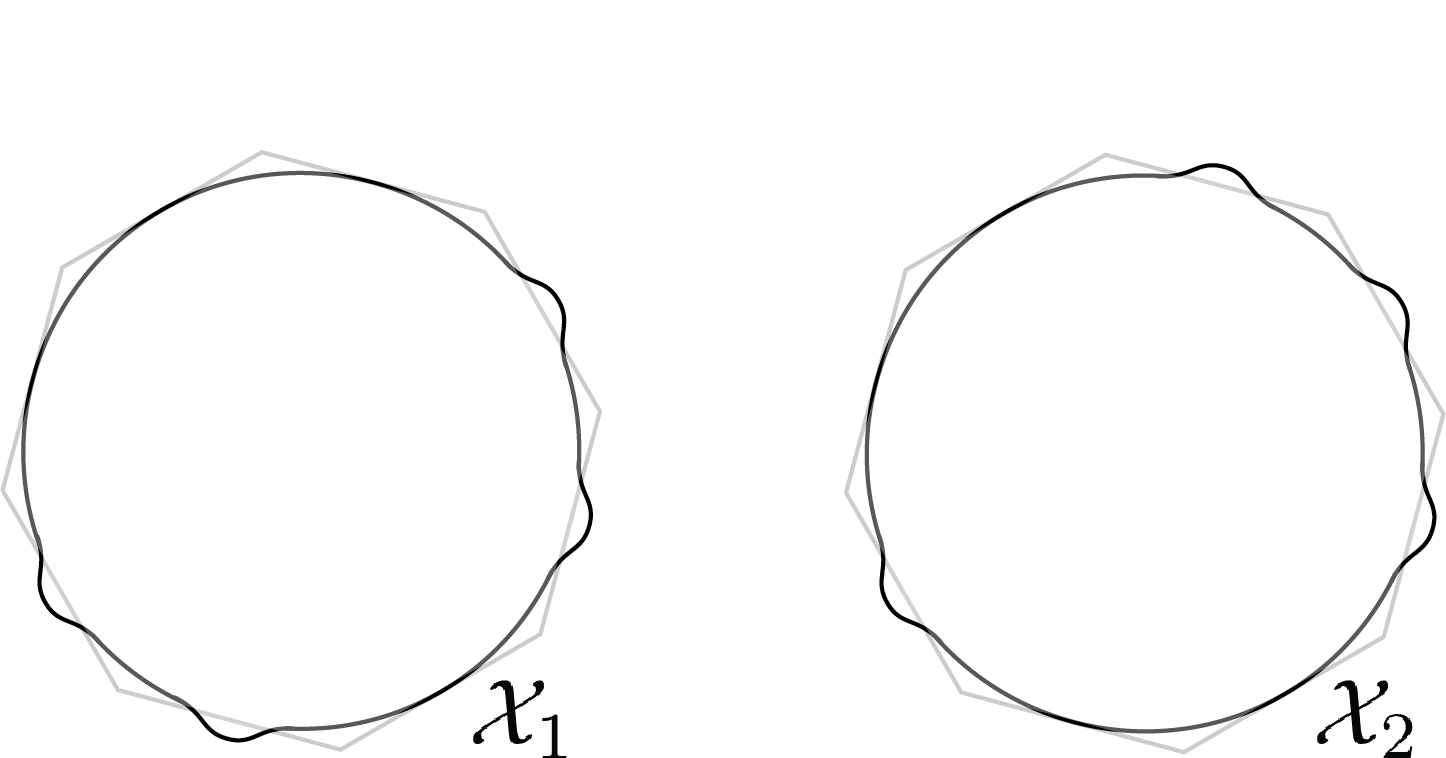}
\end{center}
\caption{Counterexample to the Curve Histogram Conjecture with both curves close to the unit circle. Circumscribed octagons used to guide the construction are shown with lower opacity.}\label{fig:pessimistic}
\end{figure}

\begin{example} In contrast to Example \ref{exmp:pessimistic}, we now give an example of an infinite family of curves containing the circle whose members are completely distinguished from one another by their distance distributions. given $a \geq b \geq 0$, let $X_{a,b}$ be an ellipse with semi-major axis $a$ and semi-minor axis $b$. We treat each such $X_{a,b}$ as a mm-space $\mX_{a,b}$ by endowing it with extrinsic Euclidean distance and normalized arclength measure. We claim that if ellipses $X_{a,b}$ and $X_{a',b'}$ have the same distance distributions $H_{\mX_{a,b}} = H_{\mX_{a',b'}}$ then $a=a'$ and $b=b'$, whence we conclude that the curves are related by a rigid motion. Indeed, if the ellipses have the same distance distributions, then they have the same diameter, as this can be read off from the distance distribution by the formula $\mathrm{diam}(X_{a,b}) = \inf\{r \geq 0 \mid H_{\mX_{a,b}}(r) = 1\}$. The diameter of $X_{a,b}$ is $2a$ and this implies that $a = a'$. Using the degree-1 term of the expansion  \eqref{eqn:global_distribution_hypersurface} in the following subsection, we also see that the ellipses have the same length. For fixed $a$, the length of an ellipse $X_{a,b}$ is a strictly increasing function of $b$. Thus $a = a'$ and equality of lengths together imply that $b = b'$. 
\end{example}

Proposition \ref{prop:curve_histogram_conjecture_false} shows that the global distance distribution is not globally injective on the category of embedded plane curves, answering inverse problem (\globalInj) in the negative for plane curves. The question of whether it is locally injective remains open (i.e., the answer to (\local) for plane curves), and is stated here more formally.

\begin{question}
 If $X_\tau$ is a smooth one-parameter family of embedded plane curves such that the family of global distance distributions $H_{\mathcal{X}_\tau}$ is constant in $\tau$, must each pair of curves in the family differ by a rigid motion?
\end{question}

\subsubsection*{Injectivity of the Local Distribution for Curves}

One could ask whether global injectivity in the sense of (\globalInj) holds for \emph{local} distance distributions of plane curves. We consider the subcategory of smooth plane curves whose objects have the generic condition that their signed curvature functions have isolated zeros and whose morphisms are twice continuously differentiable and \emph{infinitesimally measure-perserving}: that is, maps whose derivatives preserve norms of tangent vectors.

\begin{proposition}[Global Injectivity of Local Distance Distributions for Plane Curves (\globalInj)]\label{prop:local_distribution_curves}
Let $X_1$, $X_2$ be twice continuously differentiable simple closed plane curves whose signed curvature functions have isolated zeros. If there exists a twice continuously differentiable  infinitesimally measure-preserving map $\phi:X_1 \rightarrow X_2$ such that $h_{\mX_1}(x,r) = h_{\mX_2}(\phi(x),r)$ for all $(x, r) \in X_1 \times \R_{\geq 0}$, then $X_1$ is isometric to $X_2$. 
\end{proposition}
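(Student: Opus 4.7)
My plan is to reduce the statement to the fundamental theorem of plane curves by reading curvature information off the Taylor expansion of the local distance distribution at $r = 0$. The starting point is the pointwise expansion
\[
h_{\mX}(x(s), r) = \frac{2r}{\ell(X)} + \frac{\kappa^2(s)\, r^3}{12\, \ell(X)} + O(r^4),
\]
which holds for any $C^2$ simple plane curve $\mX$ parametrized by arc length $s \mapsto x(s)$ (with total length $\ell(X)$ and signed curvature $\kappa$), and is the pointwise version of the identity \eqref{eq:H} already used in the proof of Proposition \ref{prop:distance_distribution_circle}. Writing $\phi$ in arc-length parameters as $\phi(x_1(s)) = x_2(\psi(s))$ for a $C^2$ map $\psi$, the hypothesis $h_{\mX_1}(x_1(s),r) = h_{\mX_2}(x_2(\psi(s)),r)$ and matching of the $r$ and $r^3$ coefficients immediately give $\ell(X_1) = \ell(X_2) =: \ell$ and $\kappa_1^2(s) = \kappa_2^2(\psi(s))$ for all $s \in \R/\ell\mathbb{Z}$.

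I next analyze the structure of the $C^2$ measure-preserving map $\psi:\R/\ell\mathbb{Z}\to \R/\ell\mathbb{Z}$. The change-of-variables formula for the pushforward of an absolutely continuous measure forces the local density identity $\sum_{s \in \psi^{-1}(t)} 1/|\psi'(s)| = 1$ for every $t$. In particular $\psi'$ is nowhere zero, so $\psi$ is a $C^2$ covering map of some integer degree $n \geq 1$. I would then use the full hypothesis that $h_{\mX_1}(x_1(s_j), \cdot)$ agrees as a function of $r$ on $[0,\mathrm{diam}(X_1)]$ at the $n$ preimages $s_j \in \psi^{-1}(t)$, combined with the generic assumption that $\kappa_1$ has only isolated zeros, to conclude $n = 1$: if $n \geq 2$ then every $y \in X_2$ has several preimages in $X_1$ with identical distance-from-a-point profiles, which together with the pointwise identity $\kappa_1^2(s_j) = \kappa_2^2(t)$ translates into a nontrivial internal rotational symmetry of $X_1$ that the generic profile of $\kappa_1$ is designed to exclude. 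Thus $|\psi'| \equiv 1$ and $\psi$ is an isometry (rotation or reflection) of the arc-length circle.

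With $n=1$ in hand, on each open arc of $\R/\ell\mathbb{Z}$ lying between two consecutive zeros of $\kappa_1$, continuity and $\kappa_1^2 = \kappa_2^2\circ\psi$ give a locally constant sign $\eps \in \{\pm 1\}$ with $\kappa_1 = \eps\,(\kappa_2 \circ \psi)$. Since the zeros of $\kappa_1$ are isolated, the genericity condition makes them simple (transverse) zeros, so $\kappa_1$ and $\kappa_2 \circ \psi$ change sign simultaneously across each such zero and $\eps$ propagates unchanged. Hence $\eps$ is globally constant, and after possibly replacing $X_2$ by its mirror image we may take $\eps = 1$, so that $\kappa_1(s) = \kappa_2(\psi(s))$ for all $s$. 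The fundamental theorem of plane curves then produces a rigid motion $T \in E(2)$ with $T\circ x_1 = x_2 \circ \psi$, and therefore $X_1$ and $X_2$ coincide up to a rigid motion of $\R^2$ and in particular are isometric as metric spaces.

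The main obstacle is the step ruling out multi-covers $n \geq 2$. With only $C^2$ regularity of the curves, Taylor coefficients of $h_{\mX}$ beyond order $r^3$ are not directly available, so one cannot simply invoke matching of higher local curvature invariants at the $n$ preimages; instead one must exploit the equality of the full global profiles $r \mapsto h_{\mX_1}(x_1(s_j), r)$ on $[0,\mathrm{diam}(X_1)]$ and convert it into an internal rigid symmetry of $X_1$ that the ``isolated zeros'' hypothesis on $\kappa_1$ forbids. Making this conversion precise, together with promoting ``isolated'' to ``transverse'' in the sign-matching step, is where the ``generic'' qualifier of the statement does its essential work.
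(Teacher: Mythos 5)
Your first two steps (reading $\ell$ and $\kappa^2$ off the $r$ and $r^3$ coefficients, and the covering-map analysis of $\psi$) are reasonable, and you correctly flag the exclusion of degree-$n$ covers as an unfinished obstacle; the paper avoids that issue by simply taking $\gamma_2=\phi\circ\gamma_1$ as an arclength parametrization. The genuine gap is in your sign-propagation step, and it is not a technicality: it is the entire content of the paper's proof. You assert that genericity makes the zeros of $\kappa_1$ transverse and that consequently ``$\kappa_1$ and $\kappa_2\circ\psi$ change sign simultaneously across each such zero.'' Neither claim holds. The paper's genericity hypothesis is only that the signed curvatures have \emph{isolated} zeros, which does not exclude zeros without sign change (e.g.\ $\kappa(s)=s^2$ near $s_0$, which is perfectly consistent with a $C^2$ curve). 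More importantly, even if $\kappa_1$ has a transverse sign-changing zero at $s_0$, the only constraint you have on $\kappa_2\circ\psi$ is $(\kappa_2\circ\psi)^2=\kappa_1^2$, which says nothing about whether $\kappa_2\circ\psi$ changes sign at $s_0$: the function $|\kappa_1|$ is an admissible candidate for $\kappa_2\circ\psi$ of a $C^2$ curve (curvature of a $C^2$ curve is merely continuous). So the case where exactly one of the two curvatures changes sign at a common zero is untouched by your argument, and $\eps$ can a priori flip there.

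The paper's proof is devoted precisely to ruling out this case, and it cannot be done by Taylor coefficients alone (all local curvature data agree up to sign). Instead one argues geometrically: near such a point, $X_1$ is locally the graph of a function built from two convex pieces $f_1,f_2$ while $X_2$ is locally the graph of $f_1$ on one side and $-f_2$ on the other; comparing chord lengths shows $\|(x,f(x))-(x_0,f(x_0))\|<\|(x,g(x))-(x_0,g(x_0))\|$ for points $x,x_0$ on opposite sides of the inflection, which forces the arclength of $B_{2x_0}(x_0)\cap X_1$ to strictly exceed that of $B_{2x_0}(x_0)\cap X_2$ and hence contradicts equality of the \emph{full} local distance distributions (not just their jets at $r=0$). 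You need to supply an argument of this non-infinitesimal kind to close the gap; as written, your proof establishes only $\kappa_1^2=\kappa_2^2\circ\psi$, which is strictly weaker than the conclusion.
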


\begin{proof}
Pick an arclength parameterization $\gamma_1:S^1 \rightarrow X_1$ of $X_1$ and let $\gamma_2  = \phi \circ \gamma_1$. By our assumptions, $\gamma_2$ is an arclength parameterization of $X_2$. It follows from \cite[Section 7]{brinkman2012invariant} (see also the theorem of Karp and Pinksy \cite{karp1989volume}, given as Theorem \ref{thm:sphere_distributions} below) that we have Taylor expansions
\begin{linenomath}\begin{equation*}
h_{\mX_j}(\gamma_j(s),r) = \frac{2r}{\ell(X_j)} + \frac{r^3}{12 \ell(X_j)} \kappa_j(s)^2 + O(r^5),
\end{equation*}\end{linenomath}
where $\ell(X_j)$ is the length and $\kappa_j$ is the signed curvature of curve $X_j$. For each $s \in \mathbb{S}^1$, 
\begin{linenomath}\begin{equation*}
\kappa_1(s)^2 = \kappa_2(s)^2,
\end{equation*}\end{linenomath}
where we are expressing the $\kappa_j$ as functions on $\mathbb{S}^1$ via our parameterizations $\gamma_j$. 

So far we have that $\kappa_1(s) = \pm \kappa_2(s)$ for each $s$ and we claim that the functions must agree up to a \emph{global} choice of sign; i.e., $\kappa_1 = \pm \kappa_2$. Since plane curves with the same signed curvature up to a global sign must differ by a rigid Euclidean motion, establishing this claim will complete the proof. To obtain a contradiction, assume that this is not the case. By our genericity assumption, it must be that there exist points $\gamma_1(s_0) \in X_1$ and $\gamma_2(s_0) \in X_2$ of vanishing curvature such that $\kappa_1 (s) = \pm \kappa_2(s)$ for $s$ sufficiently close to $s_0$ with $s < s_0$ and $\kappa_1(s) = \mp \kappa_2(s)$ for $s$ sufficiently close to $s_0$ with $s > s_0$ (choosing an ordering of $\mathbb{S}^1$ locally). We will show that this forces inequality in the local distance distributions. To do so, we restrict to one of many possible (but essentially equivalent) cases.

Without loss of generality, assume that there is an $\epsilon > 0$ and a point $ \gamma_1(s_0) \in X_1$ of vanishing curvature such that $\kappa_1(s) > 0$ for all $s \in (-\epsilon,\epsilon) \setminus\{0\}$ such that $\kappa_2(s) > 0$ for $s \in (-\epsilon,0)$ and $\kappa_2(s) < 0$ for $s \in (0,\epsilon)$. After applying some rigid motions, we can view $X_1$ near $\gamma_1(s_0)$ as the graph of a function $f:(-\delta,\delta) \rightarrow \R$
\begin{linenomath}\begin{equation*}
f(x) = \left\{\begin{array}{cc}
f_1(x) & x \in (-\delta,0] \\
f_2(x) & x \in (0,\delta),\end{array}\right.
\end{equation*}\end{linenomath}
where $\delta > 0$ is sufficiently small and $f_1,f_2$ are both strictly convex. The curve $X_2$ is then expressed near $\gamma_2(s_0)$ as the graph of
\begin{linenomath}\begin{equation*}
g(x) = \left\{\begin{array}{cc}
f_1(x) & x \in (-\delta,0] \\
-f_2(x) & x \in (0,\delta). \end{array}\right.
\end{equation*}\end{linenomath}
(See Figure \ref{fig:generic_curve_picture} for an example.)
Now fix $x_0 \in (-\delta,0)$. It follows from the definitions of these functions that for any $x \in (0,\delta)$,
\begin{equation}\label{eqn:distance_inequality}
\| (x,f(x)) - (x_0,f(x_0)) \| < \| (x,g(x)) - (x_0,g(x_0)) \|.
\end{equation}
Choose $|x_0|$ small enough so that $B_{\R^2}((x_0,f_1(x_0)),2x_0)$ only intersects the curves $X_j$ in the graphs of $f$ and $g$, respectively. Consider the arclength $\ell_j$ of $B_{\R^2}((x_0,f_1(x_0)),2x_0)\cap X_j$. For $j=1$, this length breaks into two pieces: $\ell_1 = \ell_1^- + \ell_1^+$, 
where $\ell_1^-$ is the length of the intersection of the ball with the graph of $f_1$ and $\ell_1^+$ is the length of the intersection with the graph of $f_2$. We also have $\ell_2 = \ell_2^- + \ell_2^+$, with $\ell_2^\pm$ defined similarly. By construction, we have $\ell_1^- = \ell_2^-$. On the other hand \eqref{eqn:distance_inequality} implies that $\ell_1^+ > \ell_2^+$. This implies that the local distance distributions of $X_1$ and $X_2$ are not the same, and we have arrived at the desired contradiction.
\end{proof}
\begin{figure}
    \centering
    \includegraphics[width = 0.7\textwidth]{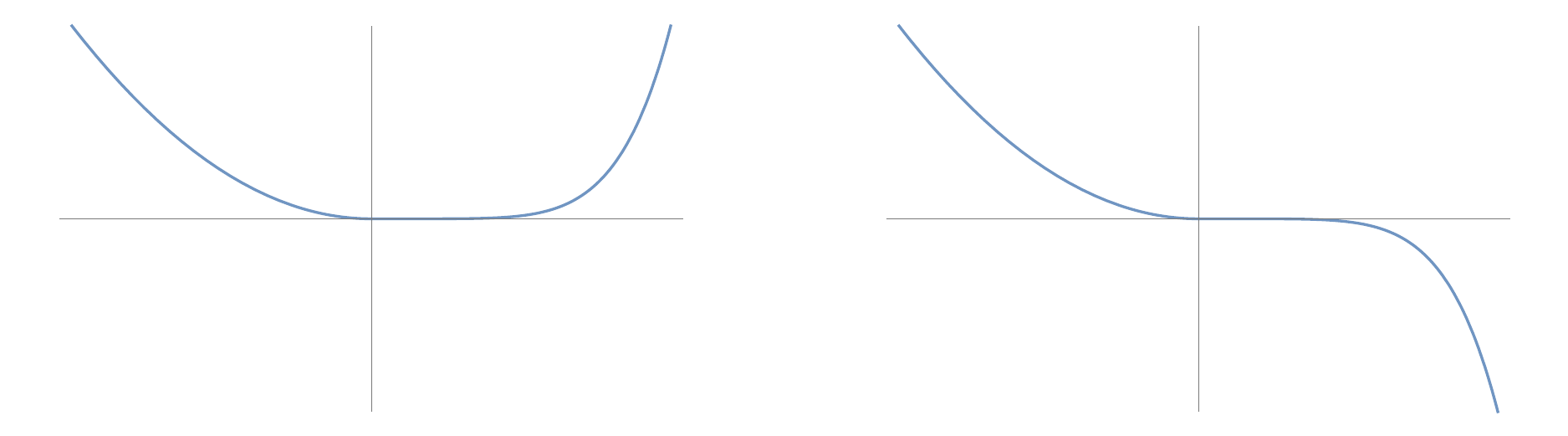}
    \caption{General picture of the local behavior of curves $X_1$ and $X_2$ from the proof of Proposition \ref{prop:local_distribution_curves}.}
    \label{fig:generic_curve_picture}
\end{figure}

\subsection{Sphere Characterization for Embedded Manifolds}

Example \ref{exmp:pessimistic} shows that inside any arbitrarily small Hausdorff neighborhood of the circle there exist two non-isometric curves with the same global distance distribution. This construction can be generalized to give a similar example for higher-dimensional manifolds.

\begin{figure}
\begin{center}
\includegraphics[width = 0.8\textwidth]{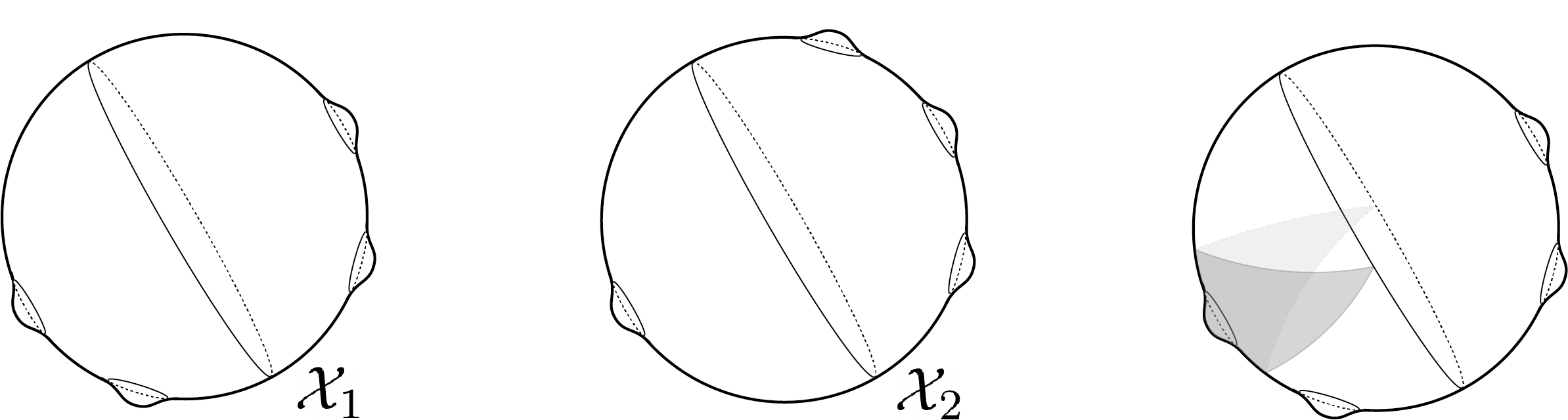}
\end{center}
\caption{Nonisometric surfaces $\mX_1$ and $\mX_2$ with the same distance distributions, as described in Example \ref{exmp:surfaces_same_distribution}. The figure on the right shows an example lune region of $\mX_1$ used to prove that $H_{\mX_1} = H_{\mX_2}$.}\label{fig:surface_counterexample}
\end{figure}

\begin{example}\label{exmp:surfaces_same_distribution}
We can extend Example \ref{exmp:pessimistic} to construct two surfaces $\mX_1$ and $\mX_2$, each of which is arbitrarily close to the unit sphere in Hausdorff distance, such that $\mX_1 \not \approx \mX_2$ but $H_{\mX_1} = H_{\mX_2}$. Instead of adding small bumps to a unit circle which mimic the pattern of the polygons in Figure \ref{fig:curve_histogram_conjecture}, we instead add bumps centered along a great circle of a unit sphere. The construction is shown in Figure \ref{fig:surface_counterexample}. To show that $H_{\mX_1} = H_{\mX_2}$, we apply the strategy of the proof of Proposition \ref{prop:curve_histogram_conjecture_false}. In this case, the decompositions of the surfaces necessary for the application of Theorem \ref{thm:main_theorem} are obtained dividing each surface into 8 lunes---an example of such a lune is shown on the right in Figure \ref{fig:surface_counterexample}. Doing so in analogy with Proposition \ref{prop:curve_histogram_conjecture_false} allows us to apply the same matchings of isomorphic pairs.

This construction generalizes to arbitrary dimension, by adding small, disjointly supported bumps centered on a great circle of a $d$-sphere. The concept of a lune makes sense in arbitrary dimensions, as a spherical region bounded between two great semicircles and the proof strategy of Proposition \ref{prop:curve_histogram_conjecture_false} still goes through.
\end{example}

On the other hand, the global distance distribution does distinguish the unit circle (respectively, unit sphere) from all other sufficiently smooth simple closed curves (resp., closed surfaces), as we show in the next result. We extend the result to arbitrary dimension, assuming some curvature bounds. Here we consider an embedded, smooth, closed codimension-1 manifold $X$ in $\R^{d+1}$ as a mm-space $\mathcal{X}$ with extrinsic Euclidean distance and normalized $d$-dimensional Hausdorff measure. The unit $d$-sphere $\mathbb{S}^d$ is considered as a mm-space in this manner. Let $\{\kappa_j(p)\}_{j=1}^d$ denote the principal curvatures at a point $p \in X$ with respect to a chosen orientation.

\begin{theorem}[Sphere Characterization for Embedded Hypersurfaces (\spheres)]\label{thm:sphere_distributions}
Let $X$ be an embedded, closed, $d$-dimensional submanifold of $\R^{d+1}$ with principal curvatures  $\kappa_j$. 
\begin{enumerate}
    \item When $d \leq 2$, if $H_{\mathcal{X}}(r) = H_{\mathbb{S}^d}(r)$ for all $r \geq 0$, then $X$ is isometric to $\mathbb{S}^d$.
    \item When $d > 2$, further assume that all principal curvatures satisfy $|\kappa_j(p)| \leq 1$ at every point $p \in X$. If $H_\mathcal{X}(r)=H_{\mathbb{S}^d}(r)$ for all $r\geq 0$ then $X$ is isometric to $\mathbb{S}^d$.
\end{enumerate}
\end{theorem}

The proof will use the following result of Karp and Pinsky, which we express using our notation. For an embedded hypersurface $X$, let $\mathrm{vol}_X$ denote its induced volume form and denote its total volume as $\mathrm{Vol}(X)$.

\begin{theorem}[\cite{karp1989volume}]
Let $p \in X$ and let $\kappa_1(p),\ldots,\kappa_d(p)$ denote the principal curvatures of $X$ at $p$. Then for sufficiently small $r \geq 0$, we have 
\begin{equation}\label{eqn:Karp_Pinksy}
h_\mathcal{X}(p,r) = \frac{1}{\mathrm{Vol}(X)} \left(\frac{\sigma_{d-1}}{d} r^d + \frac{\sigma_{d-1}}{8 d (d+2)} \left(2 A - B \right) r^{d+2} + O\left(r^{d+3}\right)\right),
\end{equation}
where 
\begin{linenomath}\begin{equation}\label{eqn:curvature_terms}
A := \sum_{j=1}^d \kappa_j^2(p), \;\; \;\; B := \left(\sum_{j=1}^d \kappa_j(p)\right)^2
\end{equation}\end{linenomath}
and $\sigma_k = \frac{2 \pi^{\frac{k+1}{2}}}{\Gamma\left(\frac{k+1}{2}\right)}$ is the volume of the unit $k$-sphere.
\end{theorem}

\begin{remark}
The quantities $A$ and $B$ have geometric significance: $A$ is the \emph{Casorati curvature} \cite{casorati1890mesure} of $X$ at $p$, while $B$ is the square of $d$-times the mean curvature of $X$ at $p$. 
\end{remark}

The proof of Theorem \ref{thm:sphere_distributions} will use the following elementary lemma.

\begin{lemma}\label{lem:polynomial}
Let $d > 2$ and let $G(x_1,\ldots,x_d) = 2 \sum_{j=1}^d x_j^2 - \left(\sum_{j=1}^d x_j\right)^2$.
Then, on the domain $[-1, 1]^d \subset \R^d$, $G$ takes its minimum value $2d-d^2$ only at the points $\pm(1,1,\ldots,1)$.
\end{lemma}

\begin{proof}
For any point $(x_1,\ldots,x_d)$ in the domain of $G$ with $x_i\neq x_j$, we have that the point  \begin{linenomath}\begin{equation*}
\left(x_1,\dots,\frac{x_i+x_j}{2},\ldots,\frac{x_i+x_j}{2},\ldots,x_d\right)
\end{equation*}\end{linenomath}
is still in the domain of $G$ and $G(x_1,\ldots,x_d)>G(x_1,\dots,\frac{x_i+x_j}{2},\ldots,\frac{x_i+x_j}{2},\ldots,x_d)$. Indeed, the quantity $\sum_{j=1}^d x_j$ does not change under this substitution, but $\sum_{j=1}^d x_j^2$ becomes smaller due to the  inequality  $x^2+y^2>2(\frac{x+y}{2})^2$, valid for all $x\neq y$. This means that we need to consider the minimum of $g(x):=G(x,\ldots,x) = (2d-d^2)x^2$ in $[-1,1]$. But since $d>2$, the  minimizers are $x=\pm 1$.
\end{proof}

\begin{proof}[Proof of Theorem \ref{thm:sphere_distributions}]
Integrating \eqref{eqn:Karp_Pinksy} over $X$ with respect to the normalized Riemannian volume yields
\begin{equation}\label{eqn:global_distribution_hypersurface}
H_\mathcal{X}(r) = \frac{1}{\mathrm{Vol}(X)} \frac{\sigma_{d-1}}{d} r^d + \frac{1}{\mathrm{Vol}(X)^2} \frac{\sigma_{d-1}}{8d(d+1)} r^{d+2} \int_X \big(2A - B\big) \; \mathrm{vol}_X(dp) + O\left(r^{d+3}\right),
\end{equation}
for sufficiently small $r$, with $A$ and $B$ as in \eqref{eqn:curvature_terms}. Since the principal curvatures of $\mathbb{S}^d$ are all constantly equal to one, we derive from \eqref{eqn:global_distribution_hypersurface} that, for sufficiently small $r$,
\begin{linenomath}\begin{equation*}
H_{\mathbb{S}^d}(r) = \frac{1}{\sigma_d} \frac{\sigma_{d-1}}{d} r^d + \frac{1}{\sigma_d}\frac{\sigma_{d-1}}{8d(d+1)} r^{d+2} (2d - d^2) + O\left(r^{d+3}\right).
\end{equation*}\end{linenomath}
Combining \eqref{eqn:global_distribution_hypersurface} with the assumption $H_\mathcal{X}(r) = H_{\mathbb{S}^d}(r)$, we see that $\mathrm{Vol}(X) = \sigma_d$ and
\begin{linenomath}\begin{equation}\label{eqn:sphere_characterization_integral_equality}
\int_X \big(2A - B\big) \; \mathrm{vol}_X = (2d - d^2)\sigma_d.
\end{equation}\end{linenomath}

We now treat the cases $d=1$, $d=2$ and $d > 2$ separately. In the $d=1$ case, we note that \eqref{eqn:global_distribution_hypersurface} was derived specifically for curves in  \cite[Section 7]{brinkman2012invariant}. Here, $\mathrm{vol}_X = ds$ (arclength measure), $\mathrm{Vol}(X)$ (is the length of the curve) and condition \eqref{eqn:sphere_characterization_integral_equality} reads $\oint_X \kappa^2(s)\, ds = 2\pi$. By assumption, since the curve is simple its index \cite{kuhnel2006differential} is $1$, so that
\begin{linenomath}\begin{equation*}
\oint_X \kappa(s)\,ds = 2\pi.
\end{equation*}\end{linenomath}
Applying the Cauchy-Schwarz inequality, one has
\begin{linenomath}\begin{equation*}
4\pi^2 = \left(\oint_X \kappa(s)\cdot 1\,ds\right)^2 \leq \left(\oint_X \kappa^2(s)\,ds\right)\left(\oint_X 1\,ds\right)=2\pi\cdot \mathrm{Vol}(X) = 2\pi \cdot 2\pi= 4\pi^2.
\end{equation*}\end{linenomath}
The inequality is then forced to be an equality, which implies that the two involved functions ($\kappa$ and $1$) must be proportional, i.e. $\kappa$ is constant. Since $\mathrm{Vol}(X)=2\pi$, then $\kappa(s) = 1$ for all $s\in[0,2\pi]$. It follows that $\mX \approx \mathbb{S}^1$.

When $d=2$, we have $2d-d^2 = 0$, hence
\begin{linenomath}\begin{equation*}
0 = \int_X \big(2A - B\big) \; \mathrm{vol}_X = \int_X (\kappa_1(p) - \kappa_2(p))^2 \; \mathrm{vol}_X(dp).
\end{equation*}\end{linenomath}
This implies $\kappa_1(p) = \kappa_2(p)$ for all $p \in X$. A closed surface in which all points are umbillic is a sphere \cite[Section 3-2, Proposition 4]{do2016differential}, and $\mathrm{Vol}(X) = \sigma_2$ implies that $X$ is isometric to $\mathbb{S}^2$.

Finally, suppose $d > 2$. With $G$ denoting the polynomial function from Lemma \ref{lem:polynomial}, we have
\begin{linenomath}\begin{equation*}
(2d - d^2)\sigma_d = \int_X G(\kappa_1(p),\ldots,\kappa_d(p)) \; \mathrm{vol}_X(dp) \geq \mathrm{min}(G) \mathrm{Vol}(X) = (2d-d^2) \sigma_d.
\end{equation*}\end{linenomath}
It follows that the inequality must actually be an equality, so that $G$ is constantly equal to its minimum value. By Lemma \ref{lem:polynomial}, this minimum is unique and we have $\kappa_1 = \kappa_2 = \cdots = \kappa_d = \pm 1$. This implies that $\mX \approx  \mathbb{S}^d$. 
\end{proof}

\subsection{Sphere Characterization for Riemannian Manifolds}

For the rest of this section, we consider a $d$-dimensional Riemannian manifold $(X,g)$ as a mm-space $\mathcal{X}=(X,d_g,\mu_g)$ with geodesic distance $d_g$ and normalized Riemannian volume measure $\mu_g$. We denote the Riemannian volume form by $\mathrm{vol}_g$ and the total volume of $X$ with respect to $g$ by
\begin{linenomath}\begin{equation*}
\mathrm{Vol}_g(X) = \int_X \mathrm{vol}_g(dp),
\end{equation*}\end{linenomath}
so that  $\mu_g = \frac{1}{\mathrm{Vol}_g(X)} \mathrm{vol}_g$. In this subsection we denote by $\mathbb{S}^d$ the unit sphere with its canonical Riemannian mm-space structure. Since the metric is canonical here, we use specialized notation $d_{\mathbb{S}^1}$, $\mathrm{vol}_{\mathbb{S}^d}$ and $\mathrm{Vol}(\mathbb{S}^d)$ for geodesic distance, volume form and total volume, respectively.

\begin{example}\label{exmp:pessimistic_riemannain}
Consider the construction of Example \ref{exmp:surfaces_same_distribution}. Endowing the surfaces $\mX_1$ and $\mX_2$ with geodesic distance induced by their embedding in $\R^3$ yields Riemannian surfaces $\mX_1$ and $\mX_2$ which are not isomorphic, can be made arbitrarily close to the 2-sphere in Gromov-Hausdorff distance, and which have the same global distance distribution. The last point can once again be proved by applying Theorem \ref{thm:main_theorem} using partitions into lunes (since the pairs of lunes are related by rigid motions, they are isometric with respect to their Riemannian structures). As was the case in Example \ref{exmp:surfaces_same_distribution}, this example generalizes to arbitrary dimension.
\end{example}

We now give a characterization of the unit sphere in the category of Riemannian mm-spaces. The statement of the theorem uses notation introduced earlier: the global shape measure \eqref{eqn:global_shape_measure} and the Wasserstein 1-distance \eqref{eqn:wasserstein_distance}.

\begin{proposition}[Sphere Characterization for Riemannian Manifolds (\spheres)]\label{prop:sphere_distributions_riemannian}
Let $(X,g)$ be a $d$-dimensional Riemannian manifold with  Ricci curvatures bounded below by $d-1$. Let $dH_\mathcal{X}$ and $dH_{\mathbb{S}^{d}}$ denote the  global shape measure of $\mathcal{X}$ and the $d$-dimensional round sphere with its standard Riemannian structure, respectively.
\begin{itemize}
    \item[(1)] There exists an $\epsilon = \epsilon(d)>0$ such that if the Wasserstein $1$-distance between the shape measures satisfies
    \begin{linenomath}\begin{equation*}
    L_\mathrm{H}(\mX,\mathbb{S}^d) = d_{\mathrm{W},1}^\R(dH_\mathcal{X},dH_{\mathbb{S}^d}) < \epsilon
    \end{equation*}\end{linenomath}
    then $X$ is diffeomorphic to $\mathbb{S}^d$. 
    \item[(2)] If the Wasserstein distance is zero---equivalently,  $H_\mathcal{X}(r)=H_{\mathbb{S}^d}(r)$ for all $r\geq 0$, or $L_\mathrm{H}(\mathcal{X},\mathbb{S}^d) = 0$---then $X$ is isometric to $\mathbb{S}^d$. 
    \end{itemize}
\end{proposition}

This proposition and its proof illustrate the the connection between distance distribution inverse problems and classical results in Riemannian geometry. The proof follows from a result of Kokkendorff \cite{kokkendorff2008characterizing} (the second part also follows from a classic result of Cheng \cite{cheng}).  We will use the concept of $1$-diameters, together with a lemma. For a mm-space $\mathcal{X}$, the \emph{$1$-diameter of $\mathcal{X}$}, $\mathrm{diam}_1(\mathcal{X})$, is the number
\begin{linenomath}\begin{equation*}
\mathrm{diam}_1(\mathcal{X}) := \iint_{X \times X} d_X(x,x') \mu_X \otimes \mu_X (dx \times dx').
\end{equation*}\end{linenomath}
Properties of this invariant (and more general $p$-diameters of a mm-space) are described in  \cite[Section 5.2]{memoli2011gromov}.

\begin{lemma}\label{lem:1-diameter}
For mm-spaces $\mathcal{X}$ and $\mathcal{Y}$ we have the lower bound
\begin{linenomath}\begin{equation*}
\left|\mathrm{diam}_1(\mathcal{X}) - \mathrm{diam}_1(\mathcal{Y}) \right| \leq d_{\mathrm{W},1}^\R (dH_\mathcal{X},dH_{\mathcal{Y}}).
\end{equation*}\end{linenomath}
\end{lemma}

\begin{proof}
Let $\mathcal{Z} = \left(\{z\},0,\delta_z\right)$ denote the one point mm-space. One can check that $dH_\mathcal{Z} = \delta_0$, the $\delta$  probability measure on $\R$ supported on $\{0\}$. Thus the only coupling between the probability measures $dH_\mathcal{X}$ and $dH_\mathcal{Z}$ is the product measure $dH_\mathcal{X} \otimes \delta_0$. From this observation, one is able to show $d_{\mathrm{W},1}^\mathbb{R}(dH_\mathcal{X},dH_\mathcal{Z}) = \mathrm{diam}_1(\mathcal{X})$.
The claim then follows from the reverse triangle inequality.
\end{proof}

\begin{proof}[Proof of Proposition \ref{prop:sphere_distributions_riemannian}]

To prove the first statement, we appeal to the previously mentioned sphere characterization theorem of Kokkendorff \cite[Theorem 4]{kokkendorff2008characterizing}. Translating Part 3 of this theorem into our terminology, it says that there exists an $\epsilon = \epsilon(d)$ such that, under our curvature assumptions, $\left|\mathrm{diam}_1(\mathcal{X}) - \mathrm{diam}_1(\mathbb{S}^d)\right| < \epsilon$ implies that $X$ is diffeomorphic to $\mathbb{S}^d$. Together with Lemma \ref{lem:1-diameter}, this proves the first claim of the theorem.

The second claim of the theorem follows directly from Part 2 of \cite[Theorem 4]{kokkendorff2008characterizing}. Alternatively, we can observe that $H_\mathcal{X} = H_{\mathbb{S}^d}$ implies that $X$ and $\mathbb{S}^d$ have the same diameter. It then follows immediately from Cheng's Rigidity Theorem \cite{cheng} that $X$ and $\mathbb{S}^d$ are isometric.
 \end{proof}

Observe that in both the embedded and Riemannian categories, our sphere characterization theorems use assumptions on curvature for technical reasons. On the other hand, we do not have counterexamples showing that these assumptions are necessary. The proofs of these results only use data about the distributions coming from the infinitesimal radius regime (in the form of Taylor expansions) or rather coarse long-range information (diameter of the spaces), so it is possible that using more detailed long-range information from the distributions could allow one to drop the curvature assumptions.

\begin{question}
 Can the curvature assumptions be removed in Theorem \ref{thm:sphere_distributions} and Proposition \ref{prop:sphere_distributions_riemannian}?
\end{question}

\subsection{Riemannian Surfaces}

The results above on general Riemannian manifolds can be strengthened if we restrict to the 2-dimensional case.

\subsubsection*{Constant Curvature Surface Characterization}

Proposition \ref{prop:sphere_distributions_riemannian} on sphere characterization for Riemannian manifolds can be improved and generalized in the category of Riemannian surfaces, where one can not only remove the curvature assumption in the case of spheres but also one can obtain classification up to diffeomorphism for other constant curvature model spaces.

\begin{theorem}[Constant Curvature Surface Characterization for Riemannian Surfaces (\spheres)]\label{thm:surface_characterization}
Let $(X_0,g_0)$ and $(X,g)$ be any two oriented 2-dimensional closed Riemannian manifolds  such that $X_0$ has constant Gaussian curvature $\kappa \in \mathbb{R}$ and $H_{\mathcal{X}} = H_{\mathcal{X}_0}$. Then $X$ is diffeomorphic to $X_0$ and also has constant curvature $\kappa$. In particular,  if $\kappa > 0$ then $X$ and $X_0$ are both isometric to a round sphere of radius $\kappa^{-1/2}$.
\end{theorem}

The proof uses the following lemma (stated for arbitrary dimension, because it will be useful later on).

\begin{lemma}\label{lem:riemannian_distributions}
Let $\mathcal{X}$ be a $d$-dimensional Riemannian manifold with its Riemannian mm-space structure. The local distribution of distances of $\mathcal{X}$ admits the following Taylor expansion: for $r>0$ small enough and all $p \in X$,
\begin{linenomath}\begin{equation}\label{eqn:local_surface_taylor_expansion}
h_\mathcal{X}(p,r) = \frac{\omega_d(r)}{\mathrm{Vol}_g(X)}\left(1-\frac{S_g(p)}{6(d+2)}r^2 + O(r^4)\right),
\end{equation}\end{linenomath}
where $\omega_d(r)$ is the volume of a ball of radius $r$ in $\R^d$ and  $S_g(p)$ is the \emph{scalar curvature} of $X$ at the point $p$. It then follows that 
\begin{equation}\label{eqn:global_distribution_riemannian}
H_\mathcal{X}(r) = \frac{\omega_d(r)}{\mathrm{Vol}_g(X)}\left(1-\frac{\int_X S_g(p)\,\mathrm{vol}_g(dp)}{6(d+2) \mathrm{Vol}_g(X)}r^2 + O(r^4)\right).
\end{equation}
Moreover, if $X$ is a closed surface, then
\begin{linenomath}\begin{equation*}
    H_{\mathcal{X}}(r) = \frac{\pi r^2}{\mathrm{Vol}_g(X)}\left(1-\frac{r^2}{12\,\mathrm{Vol}_g(X)} \chi(X) +\frac{r^4}{360 \, \mathrm{Vol}_g(X)}\int_X K_g^2(p)\, \mathrm{vol}_g(dp)+O(r^6)\right),
\end{equation*}\end{linenomath}
where $\chi(X)$ is the Euler characteristic and $K_g$ is the Gauss curvature of the surface $X$.
\end{lemma}

\begin{proof}
The local distance distribution expansion can be found in \cite[p. 446]{memoli2011gromov}. The global distance distribution expansion then follows by integrating. To get the expression for surfaces, we first observe that scalar curvature is twice Gauss curvature and apply the Gauss-Bonnet theorem---this explains the first two terms in the expansion. The $r^6$ term involving squared curvature follows from the expansion in \cite[Corollary 3.4]{gray1979riemannian}.
\end{proof}

Because the Euler characteristic determines the diffeotype of a closed orientable surface, we have the following immediate corollary, answering a strengthening of (\homotopy).

\begin{Corollary}[Diffeotype Characterization for Riemannian Surfaces (\homotopy)]\label{cor:diffeotype_surfaces}
Let $\catC$ denote the full subcategory of mm-spaces whose objects are closed Riemannian surfaces. Then $L_\mathrm{H}^\catC(\mX,\mY)=0$ implies $X$ and $Y$ are diffeomorphic.
\end{Corollary}

We are now prepared to prove the theorem.

\begin{proof}[Proof of Theorem \ref{thm:surface_characterization}]
 By Lemma \ref{lem:riemannian_distributions}, the condition that $H_{\mathcal{X}_0} = H_{\mathcal{X}}$ gives that $\mathrm{Vol}_g(X) = \mathrm{Vol}_{g_0}(X_0)$, $\int_X K_g = \int_{X_0} K_{g_0}$, and $\int_X K_g^2 = \int_{X_0} K_{g_0}^2$. It follows that $\chi(X) = \chi(X_0)$, hence $X$ is diffeomorphic to $X_0$ by the classification of oriented surfaces. Moreover, we have that
\begin{linenomath}\begin{equation*}
\int_X K_g^2 \cdot \int_X 1^2 = \int_{X_0} K_{g_0}^2 \cdot \int_{X_0} 1^2 = \left(\int_{X_0} K_{g_0} \right)^2 = \left(\int_X K_g\right)^2 \leq \int_X K_g^2 \cdot \int_X 1^2,
\end{equation*}\end{linenomath}
where we have invoked the equalities observed above and applied Cauchy-Schwarz . The inequality is then forced to be equality, so that $K_g$ is constant and $\int_X K_g = \int_{X_0} K_{g_0}$ forces $K_g = K_{g_0} = \kappa$. If $\kappa > 0$, then the Uniformization Theorem implies that $X$ and $X_0$ are isometric to a round sphere of radius $\kappa^{-1/2}$.
\end{proof}

\begin{example}[Flat Tori]\label{exmp:flat_tori}
This example shows that the global distance distribution is not able to distinguish closed surfaces of constant curvature $\kappa = 0$ (i.e., flat tori) up to isometry. Consider the flat torus given as the quotient of $\R^2$ by the lattice generated by translations along the vectors $u = [1,0]^T$ and $v = [1/3,1]^T$. Denote this torus by $\mathcal{T} = (T,d_T,\mu_T)$, where $d_T$ is geodesic distance and $\mu_T$ is area measure; that is, $T = \R^2/\sim$, where $x \sim y \Leftrightarrow y = x + ku + \ell v$ for some $k,\ell \in \mathbb{Z}$. Let $D$ denote the fundamental domain for $T$ with lower left corner at $0$, and adjacent sides given by $u$ and $v$---we consider $D$ to be a half-open parallelogram so that any point in $T$ has a unique lift in $D$. Let $p \in T$ denote the image of $0$ under the quotient map. By the homogeneity of $T$, $H_\mathcal{T}(\cdot)$ is determined by $h_\mathcal{T}(p,\cdot)$. Computations in $T$ can be done by lifting to $D$: in particular, the volume of the ball $B_T(p,r)$ can be computed as 
\begin{linenomath}\begin{equation*}
\mu_T(B_T(p,r)) = \mu\left(\bigcup \left\{B_{\R^2}(w,r) \cap D \mid w = 0,u,v,u+v \right\}\right),
\end{equation*}\end{linenomath}
where, for this example, $\mu$ is Lebesgue measure. In the following, we condense notation by writing $w_1 = 0, w_2 = u, w_3 = v, w_4 = u+v$ and $B_i(r) = B_{\mathbb{R}^2}(w_i,r)$. The local distance distribution satisfies the following (note that the calculations in this example are summarized in Figure \ref{fig:torus_distributions}):
\begin{itemize}
    \item For $r \leq \frac{1}{2} = \frac{\|u\|}{2}$, the balls $B_i(r)$ are disjoint, so that $h_\mathcal{T}(p,r) = \pi r^2$.
    \item For $\frac{1}{2} < r \leq \frac{\sqrt{10}}{6} = \frac{\|v\|}{2}$, $B_1(r) \cap B_2(r)$ and $B_3(r) \cap B_4(r)$ are nonempty and we have
    \begin{linenomath}\begin{equation*}
    h_\mathcal{T}(p,r) = \pi r^2 - \mu(B_1(r) \cap B_2(r) \cap D) - \mu(B_3(r) \cap B_4(r) \cap D)
    = \pi r^2 - \mu(B_1(r) \cap B_2(r)).
    \end{equation*}\end{linenomath}
    \item For $\frac{\sqrt{10}}{6} \leq r \leq \frac{\sqrt{13}}{6} = \frac{\|v-u\|}{2}$, $B_1(r) \cap B_2(r)$ and $B_3(r) \cap B_4(r)$ are nonempty, as above, and we have $B_1(r)\cap B_3(r)$ and $B_2(r) \cap B_4(r)$ nonempty as well. It follows that
    \begin{align*}
    h_\mathcal{T}(p,r) &= \pi r^2 - \mu(B_1(r) \cap B_2(r) \cap D) - \mu(B_3(r) \cap B_4(r) \cap D) \\
    &\hspace{2in} - \mu(B_1(r) \cap B_3(r) \cap D) - \mu(B_2(r) \cap B_4(r) \cap D) \\
    &= \pi r^2 - \mu(B_1(r) \cap B_2(r)) - \mu(B_1(r) \cap B_4(r)).
    \end{align*}
    \item The diameter of $T$ is the circumradius of the triangle formed by $u$, $v$ and $v-u$; specifically, $\mathrm{diam}(T) = \frac{\sqrt{130}}{18}$. When $\frac{\sqrt{13}}{6} < r \leq \frac{\sqrt{130}}{18}$, we get an intersection of $B_2(r)$ and $B_3(r)$, so that 
    \begin{linenomath}\begin{equation*}
    h_\mathcal{T}(p,r) = \pi r^2 - \mu(B_1(r) \cap B_2(r)) - \mu(B_1(r) \cap B_4(r)) - \mu(B_2(r) \cap B_3(r)).
    \end{equation*}\end{linenomath}
    \item Finally, when $r > \frac{\sqrt{130}}{18}$, $\bigcup_{i=1}^4 B_i$ covers $D$, so that $h_\mathcal{T}(p,r) = 1$.
\end{itemize}

We now construct another flat torus $\mathcal{T}'$ such that $\mathcal{T}' \not \approx \mathcal{T}$ and such that, $h_{\mathcal{T}'}(p',r) = h_\mathcal{T}(p,r)$ for all $r$, where $p'$ is the image of $0$ under the quotient map $\R^2 \to T'$. It then follows that $H_{\mathcal{T}} = H_{\mathcal{T}'}$, so that global distributions cannot distinguish flat tori up to isomorphism. Let $T'$ be the flat torus whose lattice is generated by the vectors $u' = u = [1,0]^T$ and $v' = [2/3,1]^T$---the key feature of $T'$ is that the triangles with side lengths $u,v,v-u$ and $u',v',v'-u'$ are isometric (differing by a reflection over a vertical line), while the fundamental domains $D$ and $D'$ are not isometric or related by an integral shearing transformation (hence $\mathcal{T} \not \approx \mathcal{T}'$). Due to the former property, it is straightforward to show, by arguments similar to the above, that $h_{\mathcal{T}'}(p',r) = h_\mathcal{T}(p,r)$ for all $r$.
\end{example}

\begin{figure}
    \centering
    \includegraphics[width = \textwidth]{Figures/distanceDistribution.png}
    \caption{This figure illustrates the computations of Example \ref{exmp:flat_tori}. The top and bottom rows show the local distance distribution at $0$ for the tori $T$ and $T'$, respectively. Each row shows a sequence of metric balls in the respective torus within the various radius regimes explained in the example, together with a rearrangement of the intersection region which demonstrates the corresponding volume formula from the example. The last entry in each row shows that the torus is covered by a ball with radius equal to a circumradius.}
    \label{fig:torus_distributions}
\end{figure}

\subsubsection*{Injectivity of the Local Distribution for Riemannian Surfaces}

In the following, we fix a 2-dimensional closed manifold $X$. For a chosen Riemannian metric $g$ on $X$, let $h_g$ denote the local distance distribution for $(X,g)$. We will adapt some ideas of Cartan \cite{Cartan1925}, borrowing terminology and notation from \cite{berger2012panoramic}. A metric $g$ on $X$ is called \emph{generic} if the differentials $dI_1$ and $dI_2$ of the functions $I_1:=K_g$ and $I_2:=\|dI_1 \|_g^2$ ($\|\cdot\|_g$ being the norm on $1$-forms induced by $g$) are pointwisely linearly independent on an open dense subset of $X$ (see \cite[Proposition 42, p. 234]{berger2012panoramic}). In fact, if $dI_1$ and $dI_2$ are  independent, then the metric tensor of the surface $X$ can be (pointwisely) determined on a  open dense set as explained in the proof of the following proposition.

\begin{proposition}\label{prop:local_dist_riem_surf}
Assume that $X$ is a smooth closed surface and $g_1$ and $g_2$ are two generic Riemannian metrics on $X$ such that $h_{g_1}=h_{g_2}$ over $X \times \R$, then $g_1=g_2$. 
\end{proposition}
\begin{proof} The proof is divided into two steps.

\smallskip
\noindent
\textbf{First step.} Assume that $g$  is any generic Riemannian metric  on a closed surface $X$.  Consider the following 4 functions $I_1,I_2,I_3,I_4:X\rightarrow \R$ induced from the Gauss curvature $K_g$:
\begin{linenomath}\begin{equation*}
    I_1:=K_g, \qquad
    I_2:=\|d I_1\|_g^2, \qquad
    I_3:=\langle d I_1, d I_2\rangle_{g}, \qquad
    I_4:= \|d I_2\|_g^2,
\end{equation*}\end{linenomath}
where  $dI_1$ and $dI_2$ denote the differentials (which are 1-forms on $X$) of the functions $I_1$ and $I_2$, respectively. Also, for $1$-forms $\alpha$ and $\beta$ on $X$,  $\langle\alpha,\beta\rangle_{g}$ denotes the induced inner product $g^{-1}(\alpha,\beta)$ and $\|\alpha\|_g^2$ denotes the squared norm $g^{-1}(\alpha,\alpha).$ These conventions together with the fact that the metric $g$ was assumed to be generic imply that, locally in an open dense subset $S \subset X$, the vector fields $\{\partial_{I_1},\partial_{I_2}\}$ are independent and that the inverse metric tensor field in the  local  coordinate system induced by $(I_1,I_2)$ can be written as 
\begin{linenomath}\begin{equation*}
g^{-1} = [\partial_{I_1},\partial_{I_2}]\,\Gamma\, [\partial_{I_1},\partial_{I_2}]^T = I_2\, \partial_{I_1} \partial_{I_1} +2 I_3\, \partial_{I_1} \partial_{I_2} + I_4\, \partial_{I_2} \partial_{I_2}
\end{equation*}\end{linenomath}
where $\Gamma:=\begin{psmallmatrix}I_2&I_3\\I_3&I_4\end{psmallmatrix}$.
But the genericity assumption guarantees that the determinant $I_2I_4-I_3^2$ of the matrix  $\Gamma$ is positive at points in $S$. That it is non-negative follows from the Cauchy-Schwarz inequality. From the conditions for equality in the Cauchy-Schwarz inequality we know that $I_2I_4-I_3^2$  will vanish if and only if $d I_1$ and $d I_2$ are linearly dependent. However, this condition is precluded by the genericity assumption, i.e. because $dI_1$ and $dI_2$ are independent inside $S$. Therefore we conclude that  $I_2I_4-I_3^2$ is positive, so that the matrix $\Gamma$ is indeed invertible (it is positive definite, actually). 
 From this it follows that the metric tensor $g$ at points in $S$ can be recovered explicitly as
\begin{linenomath}\begin{equation*}
g = [dI_1,dI_2]\,\Gamma^{-1}\,[dI_1,dI_2]^T = \frac{I_4 \, dI_1 \, dI_1 - 2 I_3 \, dI_1 \, dI_2 + I_2 \, dI_2 \, dI_2}{I_2 I_4 - I_3^2}.
\end{equation*}\end{linenomath}
See  \cite[p. 323]{Cartan1925} or \cite{mathOverflow} for more  details.

\smallskip
\noindent
\textbf{Second step:} Assume now that two generic Riemannian metrics $g_1$ and $g_2$ on $X$ are such that their respective local distance distributions agree: $h_{g_1}(x,r) = h_{g_2}(x,r)$ for all $x\in X$ and $r \in \R$. Then, by the Taylor expansion (\ref{eqn:local_surface_taylor_expansion}) and the observation that scalar curvature is twice Gauss curvature for surfaces, we have that $K_{g_1}(x) = K_{g_2}(x)$ for all $x\in X$. It then follows that the open dense subsets in the definition of generic metrics are the same for $g_1$ and $g_2$. Let $S$ denote this common open dense subset. Then, by the First Step above, we have that $g_1=g_2$ on $S$ and it follows by continuity that $g_1 = g_2$ on the entire surface.
\end{proof}

\subsection{Injectivity for Weighted Riemannian Manifolds}

Let $(X,g)$ be a compact Riemannian manifold. In this final subsection, we extend the class of mm-spaces derived from $X$ beyond those we have considered so far. As above, let $d_g$ denote geodesic distance and let $\mu_g$ denote the normalized Riemannian volume measure on $X$. A \emph{weighted Riemannian manifold} is a metric measure space $\mathcal{X} = (X,d_g,f\cdot\mu_g)$, where $f$ is a smooth probability density function with respect to $\mu_g$. Such spaces are frequently considered, for example, in the context of the  concentration of measure phenomenon \cite{gromov1983topological, funano2013concentration}. The next result shows that the normalized Riemannian volume is distinguished from other densities by the global distance distribution. For this section, when a base Riemannian manifold is fixed, we denote the global distance distribution of the weighted Riemannian manifold with density $f$ by $H_f$. We denote the global distance distribution with respect to normalized Riemannian volume by $H_g$. The following proposition addresses (\globalInj) in the category of densities over a fixed Riemannian manifold.

\begin{proposition}
Let $(X,g)$ be a compact Riemannian manifold and let $f$ be a probability density. If $H_f = H_g$, then $f$ is constantly equal to one.
\end{proposition}

\begin{proof}
The proof follows from the estimate
\begin{equation}\label{eqn:distribution_estimate}
    \int_X f(x)^2 \mu_g(dx) \leq 1.
\end{equation}
Assuming this claim, we deduce that
\begin{linenomath}\begin{equation*}
1 \leq \left(\int_X f(x) \mu_g(dx) \right)^2 \leq \int_X f(x)^2 \mu_g(dx) \cdot \int_X \mu_g(dx) \leq 1,
\end{equation*}\end{linenomath}
hence the Cauchy-Schwartz inequality must be an equality and $f$ is linearly dependent on the constant function $1$. It follows that $f =1$.

It remains to show \eqref{eqn:distribution_estimate}. Let $\epsilon > 0$. By the metric space Lebesgue Differentiation Theorem \cite[Theorem 1.8]{heinonen2012lectures}, for $r$ sufficiently small we have
\begin{linenomath}\begin{equation*}
f(x) < \frac{1}{\mu_g(B_X(x,r))} \int_{B_X(x,r)} f(y) \mu_g(dy) + \epsilon,
\end{equation*}\end{linenomath}
for $\mu_g$-almost every $x \in X$. This implies
\begin{linenomath}\begin{align*}
&\int_X f(x)^2 \mu_g(B_X(x,r)) \mu_g(dx) < \int_X f(x) \left(\int_{B_X(x,r)} f(y) \mu_g(dy) \right) \mu_g(dx) 
 + \epsilon \int_X f(x) \mu_g(B_X(x,r)) \mu_g(dx).
\end{align*}\end{linenomath}
Observing that the first term on the righthand side is $H_f(r)$, which is equal to $H_g(r)$, we obtain
\begin{linenomath}\begin{equation*}
    \int_X f(x)^2 \mu_g(B_X(x,r)) \mu_g(dx) < H_g(r) + \epsilon \int_X f(x) \mu_g(B_X(x,r)) \mu_g(dx).
\end{equation*}\end{linenomath}
From Lemma \ref{lem:riemannian_distributions}, we can replace both $\mu_g(B_X(x,r)) = h_g(x,r)$ and $H_g(r)$ with $\frac{\omega_d(r)}{\mathrm{Vol}_g(X)} + O(r^2)$, resulting in the estimate
\begin{linenomath}\begin{equation*}
\int_X f(x)^2 \mu_g(dx) < 1+\epsilon + O(r^2).
\end{equation*}\end{linenomath}
Taking $r \rightarrow 0$ proves the claim and completes the proof.
\end{proof}

\subsubsection*{Densities on the Circle}

In the case that $X$ is the unit circle $\mathbb{S}^1$, we are able to use tools from Fourier analysis to understand the space of densities in more detail. In the following, let $d_{\mathbb{S}^1}$ denote geodesic (arclength) distance on $\mathbb{S}^1$ and let $\mu_{\mathbb{S}^1}$ denote normalized arclength measure. For a density $f$ with respect to $\mu_{\mathbb{S}^1}$, we use the simplified notation $H_f$ for the global distance distribution of the mm-space $(\mathbb{S}^1,d_{\mathbb{S}^1},f\mu_{\mathbb{S}^1})$. We parameterize $\mathbb{S}^1$ with parameter $s \in [0,2\pi)$ (that is, $\mathbb{S}^1 = \{e^{is} \mid s \in [0,2\pi)\}$) and use additive notation for the Lie group structure on $\mathbb{S}^1$.

\begin{proposition}\label{prop:circle_densities}
Let $f_1$ and $f_2$ be two densities with respect to $\mu_{\mathbb{S}^1}$. The global distance distributions $H_{f_1}$ and $H_{f_2}$ agree if and only if $f_1$ and $f_2$ have the same autocorrelation functions:
\begin{linenomath}\begin{equation*}
\int_{\mathbb{S}^1} f_1(s)f_1(s+t) \mu_{\mathbb{S}^1}(ds) = \int_{\mathbb{S}^1} f_2(s)f_2(s+t) \mu_{\mathbb{S}^1}(ds)
\end{equation*}\end{linenomath}
for all $t$.
\end{proposition}

\begin{proof}
For $r \geq \pi$, $H_f(r) = 1$ for any distribution $f$. For $r < \pi$, 
\begin{linenomath}\begin{equation*}
H_{f_j}(r) = \int_{\mathbb{S}^1} \left( \int_{B_{\mathbb{S}^1}(s,r)} f_j(t) \mu_{\mathbb{S}^1}(dt) \right) f_j(s) \mu_{\mathbb{S}^1}(ds)  = \int_{\mathbb{S}^1} \left( \int_{s-r}^{s+r} f_j(t) \mu_{\mathbb{S}^1}(dt) \right) f_j(s) \mu_{\mathbb{S}^1}(ds).
\end{equation*}\end{linenomath}
It follows that 
\begin{linenomath}\begin{equation*}
\frac{d}{dr} H_{f_j}(r) = \int_{\mathbb{S}^1} f_j(s) \left(f_j(s+r) + f_j(s-r)\right) \mu_{\mathbb{S}^1}(ds) = 2 \int_{\mathbb{S}^1} f_j(s) f_j(s+r) \mu_{\mathbb{S}^1}(ds).
\end{equation*}\end{linenomath}
Since $H_f(0) = 0$ for any $f$, $H_{f_1} = H_{f_2}$ if and only if $\frac{d}{dr}H_{f_1} = \frac{d}{dr}H_{f_2}$, so this completes the proof.
\end{proof}

\begin{Corollary}\label{cor:circle_densities}
Let $f_1$ and $f_2$ be densities with respect to $\mu_{\mathbb{S}^1}$ with Fourier expansions
\begin{linenomath}\begin{equation*}
f_1(s) = \sum_n a_n e^{ins}, \qquad f_2(s) = \sum_n b_n e^{ins}
\end{equation*}\end{linenomath}
Then $H_{f_1} = H_{f_2}$ if and only if $|a_n| = |b_n|$ for all $n$.
\end{Corollary}

\begin{proof}
Let $q_j$ denote the autocorrelation function for $f_j$; that is,
\begin{linenomath}\begin{equation*}
q_j(t) = \int_{\mathbb{S}^1} f_j(s) f_j(s+t) \mu_{\mathbb{S}^1}(ds).
\end{equation*}\end{linenomath}
By Proposition \ref{prop:circle_densities}, $H_{f_1} = H_{f_2}$ if and only if $q_1 = q_2$. Letting $\hat{q}_j$ denote the Fourier transform of $q_j$, we have that $q_1 = q_2$ if and only if $\hat{q}_1(n) = \hat{q}_2(n)$ for all $n$, which in turn holds if and only if $|a_n| = |b_n|$ for all $n$.
\end{proof}

\pgfplotsset{width=6cm,compat=1.14}
\usepgfplotslibrary{polar}

\begin{figure}
\centering
\begin{tikzpicture}
	\begin{polaraxis}[xticklabels = \empty, yticklabels=\empty]
	\addplot+[mark=none,domain=0:720,samples=600] 
		{(1+.1 * cos(x)  + .1*cos(3*x) + .1*cos(5*x))/(2*pi)}; 
	\addplot+[mark=none,domain=0:720,samples=600] 
		{1/(2*pi)}; 
	\end{polaraxis}
	\end{tikzpicture}
	\begin{tikzpicture}
	\begin{polaraxis}[xticklabels = \empty, yticklabels=\empty]
	\addplot+[mark=none,domain=0:720,samples=600] 
	{(1+.1 * cos(x+90)  + .1*cos(3*x) + .1*cos(5*x) )/(2*pi)}; 
	\addplot+[mark=none,domain=0:720,samples=600] 
		{1/(2*pi)}; 
	\end{polaraxis}
	\end{tikzpicture}
	\begin{tikzpicture}
	\begin{polaraxis}[xticklabels = \empty, yticklabels=\empty]
	\addplot+[mark=none,domain=0:720,samples=600] 
	{(1+.1 * cos(x+180) + .1*cos(3*x) + .1*cos(5*x) )/(2*pi)}; 
	\addplot+[mark=none,domain=0:720,samples=600] 
		{1/(2*pi)}; 
	\end{polaraxis}
	\end{tikzpicture}
	\caption{Examples from a one-parameter family of densities on the circle $f_\tau$ such that $H_{f_\tau}$ is constant in $\tau$, but no two distinct densities produce isomorphic mm-spaces (see the text for the formula for $f_\tau$ plotted here). The densities plotted are $f_0$, $f_{\frac{\pi}{2}}$ and $f_\pi$, from left to right. In each figure, the blue curve is a graph of the density $f_\tau$ and the red curve is a graph of the constant density, for reference.}\label{fig:circle_densities_example}

	\end{figure}
	
\begin{example}[Global Distance Distribution is Not Locally Injective for Weighted $\mathbb{S}^1$ (\local)]\label{exmp:global_distribution_circle_densities}
We now show how to construct a one-parameter family of probability densities $f_\tau$ on $\mathbb{S}^1$ such that $H_{f_\tau}$ is constant in $\tau$ but so that the corresponding mm-spaces are not isomorphic for any distinct parameter pairs. Let $f$ be any density with Fourier expansion $f(s) = \sum_n a_n e^{ins}$. Because $f$ is a real-valued density, $a_0 = 1$ and $a_{-n} = \overline{a_n}$ (complex conjugate of $a_n$). Suppose that $a_n$ is nonzero for at least two values of $n>0$, say $n_1$ and $n_2$. Our one-parameter family of densities is then defined as
\begin{linenomath}\begin{equation*}
f_\tau(s) = 1 + e^{i\tau} a_{n_1} e^{in_1 s} + e^{-i\tau} a_{-n_1} e^{-in_1 s} + \sum_{n \neq 0,\pm n_1} a_n e^{ins}.
\end{equation*}\end{linenomath}
For every value of $\tau \in [0,2\pi)$, $f_\tau$ is a real-valued density. Moreover, for each $\tau$, the Fourier coefficients of $f_\tau$ agree in magnitude with the Fourier coefficients of $f_0$, by construction. It follows from Corollary \ref{cor:circle_densities} that $H_{f_\tau}$ is constant in $\tau$. Finally, we claim that the mm-spaces $(\mathbb{S}^1,d_{\mathbb{S}^1},f_{\tau_1} \mu_{\mathbb{S}^1})$ and $(\mathbb{S}^1,d_{\mathbb{S}^1},f_{\tau_2} \mu_{\mathbb{S}^1})$ are not isomorphic for any $\tau_1 \neq \tau_2$ in $[0,2\pi)$. Indeed, such an isomorphism would consist of an isometry $\phi$ of $\mathbb{S}^1$ satisfying $f_{\tau_2} \circ \phi^{-1} = f_{\tau_1}$. If $\phi$ is a rotation, we can parametrically express it as  $\phi^{-1}(s) = s + \theta$ for some $\theta \in [0,2\pi)$. Comparing terms of $f_{\tau_2} \circ \phi^{-1}(s)$ and $f_{\tau_1}(s)$, one is easily able to deduce from the assumption that there exist distinct nonzero  coefficients $a_{n_1}$ and $a_{n_2}$ that $\tau_1 = \tau_2$. A similar analysis works when $\phi$ is a reflection. 

Examples from a specific one-parameter family are shown in Figure \ref{fig:circle_densities_example}, with
\begin{linenomath}\begin{equation*}
f_\tau(s) = 1 + 0.1 \left( e^{i\tau} e^{is} +  e^{-i\tau} e^{-is} + e^{i3s} + e^{-i3s} + e^{i5s} + e^{-i5s} \right).
\end{equation*}\end{linenomath}
The densities plotted in the figure are $f_0$, $f_{\frac{\pi}{2}}$ and $f_{\pi}$, from left to right.
\end{example}

The following example  exhibits a 2-parameter family of densities on $\mathbb{S}^1$ which can be fully characterized by the global distance distribution.
 \begin{example}\label{exmp:bumpy_circles}  For each $A\in[0,1)$ and $n\in\mathbb{N}$ consider the density function $f_{A,n}:\mathbb{S}^1\rightarrow \R_+$ given by $f_{A,n}(s) := 1+ A\,\sin(n\cdot s)$, for $s\in[0,2\pi)$. Consider the
 mm-space $\mathcal{S}_{A,n} = (\mathbb{S}^1,d_{\mathbb{S}^1},f_{A,n}\cdot \mu_{\mathbb{S}^1})$ (so $\mathcal{S}_{0,n}$ is isomorphic to $\mathbb{S}^1$ for any $n$). By direct calculation we obtain that the autocorrelation function of $f_{A,n}$ is $[0,2\pi)\ni t\mapsto 1+\frac{A^2}{2} \cos(n\cdot t)$. Via Proposition \ref{prop:circle_densities} we immediately conclude that the global distribution of distances is able to discriminate these ``bumpy circles". More precisely, if for some $A,A'\in [0,1)$  and $n,n'\in\mathbb{N}$ it holds that  $H_{\mathcal{S}_{A,n}} = H_{\mathcal{S}_{A',n'}}$, then it must be that  $A = A'$ and (if $A,A' \neq 0$) $n=n'$.
See Figure \ref{fig:bumpy_circles}.
 \end{example}


\pgfplotsset{width=6cm,compat=1.14}
\usepgfplotslibrary{polar}
\begin{figure}
\centering
\begin{tikzpicture}
	\begin{polaraxis}[xticklabels = \empty, yticklabels=\empty]
	\addplot+[mark=none,domain=0:720,samples=600] 
		{(1+.1 * sin(5*x))/(2*pi)}; 
	\addplot+[mark=none,domain=0:720,samples=600] 
		{1/(2*pi)}; 
	\end{polaraxis}
	\end{tikzpicture}
	\begin{tikzpicture}
	\begin{polaraxis}[xticklabels = \empty, yticklabels=\empty]
	\addplot+[mark=none,domain=0:720,samples=600] 
		{(1+.05 * sin(40*x))/(2*pi)}; 
	\addplot+[mark=none,domain=0:720,samples=600] 
		{1/(2*pi)}; 
	\end{polaraxis}
	\end{tikzpicture}
		\caption{Densities $f_{A,n}$ over $\mathbb{S}^1$ from Example \ref{exmp:bumpy_circles}, plotted in blue, with the uniform density plotted in red for reference. The densities shown have parameters $A=0.1$, $n=5$ (left) and $A=0.05$, $n=40$ (right).}\label{fig:bumpy_circles}
	\end{figure}
	

\section{Characterization Results for Metric Graphs}\label{sec:metric_trees}

We now transition to consider the inverse problems of Section \ref{sec:inverse_problems} in categories of compact 1-dimensional stratified spaces called metric graphs. Roughly, a  \emph{metric graph} is constructed by gluing together a collection of intervals  \cite{kuchment2008quantum}.  To define this more precisely, we recall some basic terminology from graph theory. A \emph{(combinatorial) graph} $(V,E)$ consists of a finite set $V$ of \emph{vertices} and a collection of \emph{edges} $E \subset V \times V$. We take the convention that \emph{self-loops} $(v,v)$ are allowed to belong to $E$ and that for each pair of vertices $v,w \in V$, at most one of the pairs $(v,w)$ and $(w,v)$ belongs to $E$. We only consider connected graphs, meaning that for any $v,w \in V$, there is a \emph{path from $v$ to $w$}---that is, a sequence of vertices $v = v_1,v_2,\ldots,v_n = w$ such that for each $i=1,\ldots,n-1$, either $(v_i,v_{i+1})$ or $(v_{i+1},v_i)$ belongs to $E$.  It will sometimes be convenient to forget the order of vertices in an edge $e = (v,w)$, in which case we write $\overline{e} := \{v,w\}$.

Let $(V,E)$ be a fixed combinatorial graph. To construct a metric graph from this data, we associate each edge $e = (v,w)$ with an interval $I_e = [0,\ell_e]$ for some choice of $\ell_e > 0$: if $x_e$ is the parameter for $I_e$, we identify $v$ with $x_e = 0$ and $w$ with $x_e = \ell_e$. The underlying topological space for the metric graph $G$ is the quotient space $\bigcup_{e \in E} I_e/\sim$ under this identification of endpoints with vertices. The metric on $G$ is the \emph{geodesic distance}, defined as follows. For $v,w \in V \subset G$, the distance is given by
\begin{linenomath}\begin{equation*}
d_G(v,w) = \min \left\{\sum_{i=1}^{n-1} \ell_{e_i} \mid v=v_1,\ldots,v_n=w \mbox{ is a path from $v$ to $w$ and $\overline{e_i} = \{v_i,v_{i+1}\}$} \right\}.
\end{equation*}\end{linenomath}
For general points $x,y \in G$, $d_G(x,y)$ is defined to be the length of the shortest path joining $x$ to $y$. Suppose $x \in I_e$ and $y \in I_{e'}$; a shortest path joining $x$ to $y$ can always be realized as a path from $x$ to a vertex in $\overline{e}$, followed by a sequence of paths between vertices, as above, followed by a path from a vertex in $\overline{e}'$ to $y$. The distance is therefore given by
\begin{equation}\label{eqn:geodesic_distance_formula}
d_G(x,y) = \min \{|x-v| + |y-w| + d_G(v,w) \mid x \in I_e, \, v \in \overline{e}, \mbox{ and } y \in I_{e'},\, w \in \overline{e}'\}.
\end{equation}
 Finally, we endow $G$ with a uniform probability measure $\mu_G$---explictly, the interior of each $I_e$ is endowed with Lebesgue measure and this induces a measure on $G$. To turn this into a probability measure, we could then normalize; alternatively we take the simplifying convention that the total length always satisfies $\sum_{e \in E} \ell_e = 1$. Observe that, with these definitions, for any connected measurable subset $A \subset G$ with $x,x' \in A$, we have 
 \begin{equation}\label{eqn:measurable_subset_inequality}
 d_G(x,x') \leq \mu_G(A),
 \end{equation}
 with equality if and only if $A$ is a path realizing \eqref{eqn:geodesic_distance_formula}.
 
A mm-space $\mathcal{G} = (G,d_G,\mu_G)$ which can be constructed as above is called a \emph{metric graph}. If $\mathcal{G}$ can be constructed from a combinatorial graph $(V,E)$, we say that $(V,E)$ is an \emph{underlying combinatorial graph} of $\mathcal{G}$.

\begin{remark}
The assumption that $\sum_{e \in E} \ell_e = 1$ does not lose generality---one can show that, if using  normalized measures rather than this convention, the total length of a metric graph $\mG$ can be recovered from $H_\mG$.
\end{remark}

For each $x \in G$, there exists $\epsilon_0 > 0$ such that for all $\epsilon \in (0,\epsilon_0)$, the set $B_G(x,\epsilon) \setminus \{x\}$ has a constant number of connected components, which we refer to as the \emph{degree} of $x$, denoted $\mathrm{deg}(x)$. Note that if $(V,E)$ is an underlying combinatorial graph for $\mathcal{G}$ and $x \in V \subset G$, then $\mathrm{deg}(x)$ agrees with usual graph-theoretic notion of degree (number of edges containing $x$). For each $k=1,2,\ldots$, let $\mN_k(\mG) \subset G$ denote the set of degree-$k$ points of $\mG$. By the definition of a metric graph, $\mN_k(\mG)$ is finite for all $k \neq 2$ and $\mN_2(\mG)$ is a set of full measure in $\mG$. In analogy with the terminology used in the setting of combinatorial graphs, an element of $\mN_k(\mG)$ for $k \neq 2$ is called a \emph{vertex} of $\mG$. We define $\mN(\mG) := \cup_{k \neq 2} \mN_k(\mG)$ to be the \emph{set of vertices} of $\mG$. A connected component of $G \setminus \mN(\mG)$ is called an \emph{edge} of $\mG$. Each edge $I$ is homeomorphic to an open interval and its closure $\overline{I}$ in $G$ is homeomorphic to a closed interval with \emph{endpoints} belonging to $\mN(\mG)$---intuitively, the edges of $\mG$ are the interiors of the intervals which were glued together in the construction of $G$. Let $\mE(\mG)$ denote the finite set of edges of $\mG$; note that $\mN_2(\mG) = \cup_{I \in \mE(\mG)} I$.

\begin{figure}
    \centering
    \includegraphics[scale=0.5]{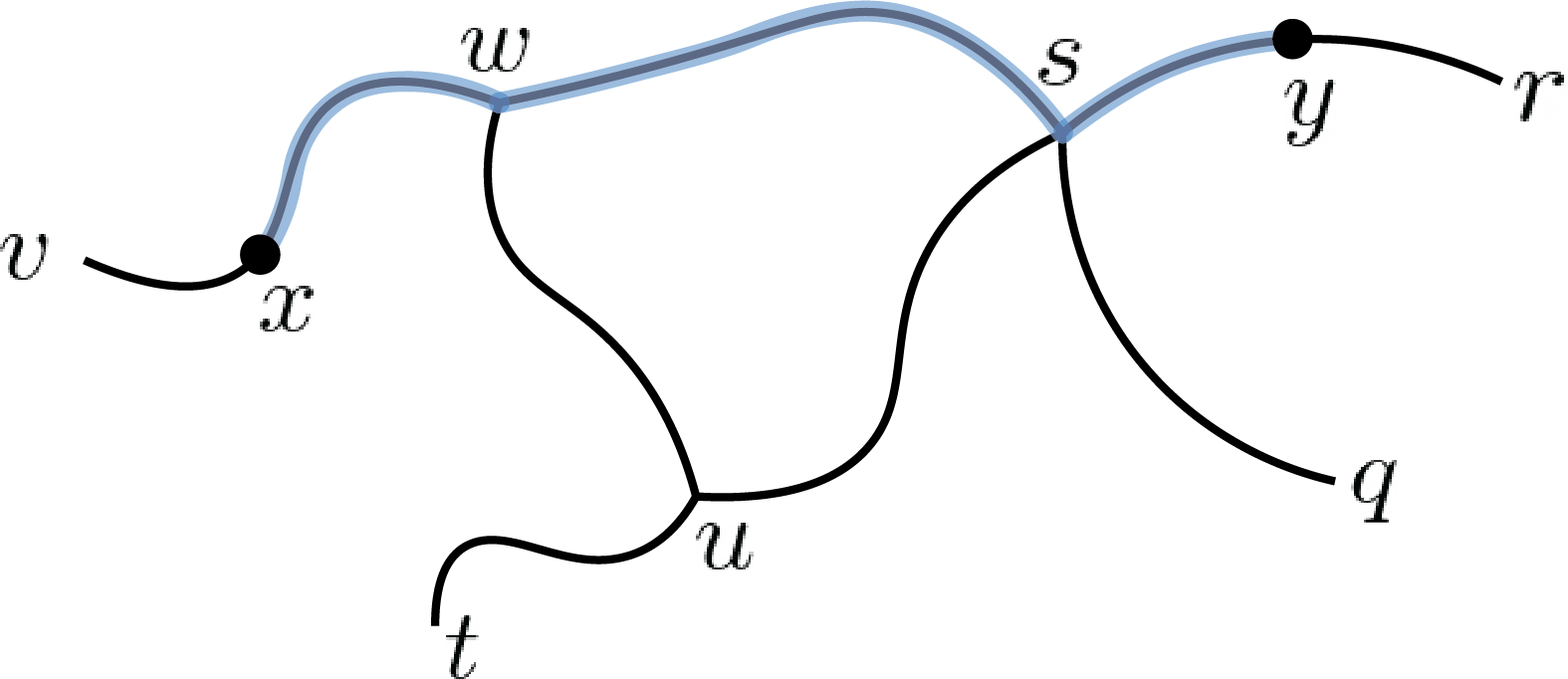}
    \caption{The metric graph from Example \ref{exmp:metricGraph}.}
    \label{fig:metric_graph}
\end{figure}

\begin{example}\label{exmp:metricGraph}
Figure \ref{fig:metric_graph} shows an example of a metric graph $\mG$. An underlying combinatorial graph for $\mG$ is given by $V = \{q,r,s,t,u,v,w\}$ and $E = \{(q,s),(r,s),(s,u),(s,w),(w,v),(w,u),(u,t)\}$. We have, for example, $\mathrm{deg}(r) = 1$, $\mathrm{deg}(x) = 2$ and $\mathrm{deg}(s) = 4$. The vertex set of $\mG$ is $\mN(\mG) = V$ and the edge set $\mE(\mG)$ is the collection of open interval segments joining vertices. The blue highlighted path joining $x$ and $y$ realizes the distance $d_G(x,y) = |x-w| + |y-s| + \ell_{(w,s)}$.
\end{example}

Metric graphs arise naturally in several applications, serving as models for systems of earthquake fault lines, GPS traces of vehicles and stress cracks in materials (see \cite{aanjaneya2012metric}).  Metric trees (i.e., metric graphs which are contractible) are of particular interest, as they appear in geometric group theory as the simplest 0-hyperbolic spaces in the sense of Gromov \cite{gromov1987hyperbolic}, in data science as targets for low-dimensional embeddings of datasets \cite{indyk20178}, in computational anatomy as models for blood vessels \cite{chalopin2001modeling,charnoz2005tree} and in shape analysis as merge trees \cite{gueunet2017task,morozov2013interleaving}. We are particularly interested in our inverse problems in this setting because metric graphs arguably provide the simplest categories of continuous mm-spaces which are not manifolds.

\subsection{Homotopy Type Characterization}

We begin by observing that the global distribution pseudometric $L_{\mathrm{H}}$ cannot distinguish nonisomorphic metric graphs. Indeed, this is illustrated by a simple example in \cite[Figure 2]{martin2008distinguishing}. This also follows from the stronger observation that even the local distribution pseudometric $L_{\mathrm{h}}$ is not able to distinguish nonisomorphic metric graphs, as shown in the following example.

\begin{example}[$L_\mathrm{h}$ Does Not Distinguish Trees (\globalInj)]\label{exmp:non_injectivity_local_dist_c_trees}

Consider the tree structure shown in Figure~\ref{fig:tree_local}, where we are free to assign the number of branches $A_j$, $B_j$ and $C_j$ in each lobe. Let $T_1$ and $T_2$ be the combinatorial trees with numbers of branches given, respectively, by 
\begin{equation}\label{eqn:tree_matrices}
\left(\begin{array}{ccc}
A_1 & A_2 & A_3 \\
B_1 & B_2 & B_3 \\
C_1 & C_2 & C_3 \end{array}\right)= \left(\begin{array}{ccc}
5 & 10 & 5 \\
3 & 3 & 14 \\
1 & 7 & 12 \end{array}\right) \;\; \mbox{ and } \;\; 
\left(\begin{array}{ccc}
A_1 & A_2 & A_3 \\
B_1 & B_2 & B_3 \\
C_1 & C_2 & C_3 \end{array}\right)= \left(\begin{array}{ccc}
5 & 14 & 1 \\
3 & 7 & 10 \\
5 & 3 & 12 \end{array}\right).
\end{equation}
We consider these as metric trees by setting all edge lengths to be equal. The metric trees $\mathcal{T}_1$ and $\mathcal{T}_2$ are nonisomorphic. However, one can check explicitly that $L^{\mathrm{M}}_{\mathrm{h}}(\mT_1,\mT_2)$ (the Monge variant of the local distribution lower bound) is zero by finding a measure preserving mapping realizing this value. One such map is encoded by
\begin{linenomath}\begin{equation*}
\left(\begin{array}{ccc}
A_1 & A_2 & A_3 \\
B_1 & B_2 & B_3 \\
C_1 & C_2 & C_3 \end{array}\right) \mapsto 
\left(\begin{array}{ccc}
A_1 & B_3 & C_1 \\
B_1 & C_2 & A_2 \\
A_3 & B_2 & C_3 \end{array}\right).
\end{equation*}\end{linenomath}
For example, all edges in lobe $A_2$ of $T_1$ are mapped isometrically to the edges in lobe $B_3$ of $T_2$. One can check that this map $\phi$ satisfies $h_{\mT_2}(\phi(x),r) = h_{\mT_1}(x,r)$ for all $r$, with the key point being that the row sums of the matrices in \eqref{eqn:tree_matrices} are all equal.

\begin{figure}
\centering
\begin{overpic}[width=0.40\textwidth,tics=5]{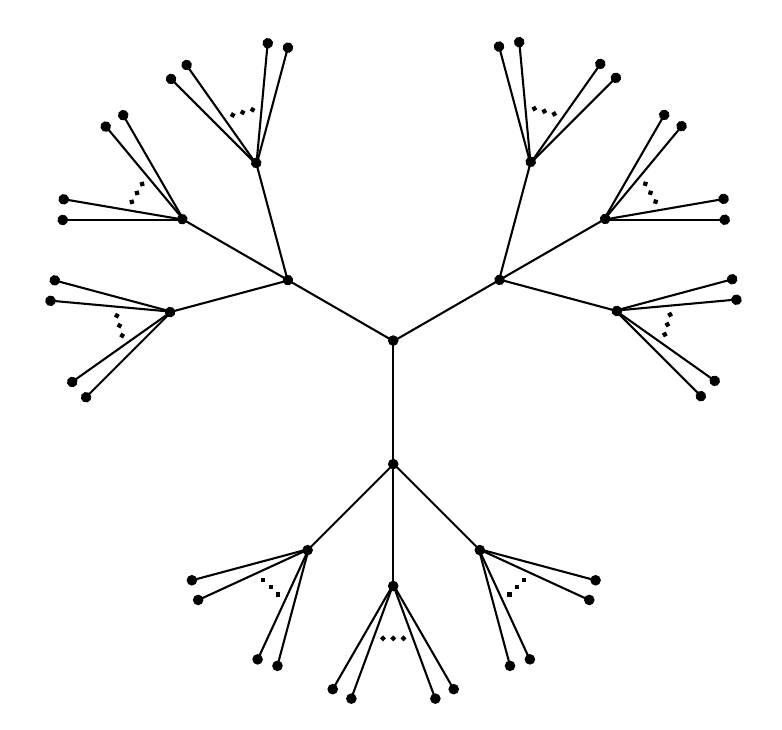}
 \put (0,45) {$A_1$}
    \put (3,71) {$A_2$}
  \put (25,88) {$A_3$}
  
   \put (95,45) {$B_1$}
    \put (92,71) {$B_2$}
  \put (70,88) {$B_3$}
  
     \put (25,5) {$C_1$}
    \put (46,-1) {$C_2$}
  \put (70,5) {$C_3$}

\end{overpic}
\smallskip
\caption{Common tree structure used in Example \ref{exmp:non_injectivity_local_dist_c_trees}.}\label{fig:tree_local}
\end{figure}
\end{example}

Since the metric graphs in Example \ref{exmp:non_injectivity_local_dist_c_trees} are both trees, they are homotopy equivalent. Our first result for this section shows that, in general, $L^{\mathrm{K}}_\mathrm{h}$ is able to detect homotopy type of metric graphs.

\begin{theorem}[Local Distance Distribution Characterizes Homotopy Type of Metric Graphs (\homotopy)]\label{thm:homotopy_type_metric_graphs}
Let $\mG$ and $\mH$ be metric graphs. If $L^{\mathrm{K}}_{\mathrm{h}}(\mG,\mH) = 0$, then $G$ and $H$ are homotopy equivalent. 
\end{theorem}

The proof of Theorem \ref{thm:homotopy_type_metric_graphs} will use a technical lemma. 

\begin{lemma}\label{lem:vertex_cardinalities}
Let $\mG$ and $\mH$ be metric graphs. If $L^{\mathrm{K}}_{\mathrm{h}}(\mG,\mH) = 0$ then $|\mN_k(\mG)| = |\mN_k(\mH)|$ for all $k \neq 2$.
\end{lemma}

Before proving the lemma, we proceed with the proof of Theorem \ref{thm:homotopy_type_metric_graphs}, after a short discussion on metric graph topology. Recall that the \emph{first Betti number} $\beta_1(X)$ of a topological space $X$ is the rank of its first singular homology group \cite{hatcher2002algebraic}. For a metric graph $\mG$ the first Betti number has a simple formula:
\begin{linenomath}\begin{equation}\label{eqn:betti_1}
\beta_1(\mG) = |\mE(\mG)| - |\mN(\mG)| + 1.
\end{equation}\end{linenomath}
The first Betti number of $G$ determines the homotopy type of $G$. Indeed, any connected metric graph $\mG$ is homotopy equivalent to a bouquet of $\beta_1(\mG)$ circles, glued together along a common point---explicitly, such a homotopy equivalence is obtained by choosing a metric tree $T \subset G$ such that $\mN(\mG) \subset T$ (i.e., a spanning tree) and homotoping $T$ to a point. We also have the following formula:
\begin{equation}\label{eqn:sum_of_degrees}
\sum_{v \in \mN(\mG)} \mathrm{deg}(v) = 2|\mE(\mG)|.
\end{equation}
This is well-known for combinatorial graphs (see, e.g., \cite[Page 4]{bollobas1998modern}) and holds for metric graphs by the same reasoning: each edge in $\mE(\mG)$ has exactly 2 vertices associated to it.

\begin{proof}[Proof of Theorem \ref{thm:homotopy_type_metric_graphs}]

Let $\mG$ and $\mH$ be metric graphs with $L^{\mathrm{K}}_{\mathrm{h}}(\mG,\mH) = 0$. Then Lemma \ref{lem:vertex_cardinalities} implies that $|\mN_k(\mG)| = |\mN_k(\mH)|$ for all $k \neq 2$---in particular, $|\mN(\mG)| = |\mN(\mH)|$. Moreover,
\begin{linenomath}\begin{equation*}
2|\mE(\mG)| = \sum_{v \in \mN(\mG)} \mathrm{deg}(v) = \sum_{k\neq 2} k |\mN_k(\mG)| = \sum_{k\neq 2} k|\mN_k(\mH)| = \sum_{w \in \mN(\mH)} \mathrm{deg}(w) = 2|\mE(\mH)|.
\end{equation*}\end{linenomath}
It follows that $\beta_1(\mG) = \beta_1(\mH)$ and the metric graphs are homotopy equivalent.
\end{proof}

\begin{question}
The above results show that the local distance distribution characterizes homotopy type of metric graphs, but is the homotopy type of a metric graph characterized by its global distance distribution? That is, if $L_\mathrm{H}(\mG,\mH) = 0$ for metric graphs $\mG$ and $\mH$, must it be that $G$ and $H$ are homotopy equivalent?
\end{question}

\begin{remark}
We conjecture that the answer to the question is ``yes". This is supported by numerical experiments and potential proof strategies. One can show that the global shape measure $dH_\mG$ is piecewise linear, with jump discontinuities corresponding to geodesic loops---or isometric embeddings of circles---in $G$. We can use this to show that if $\mG$ and $\mH$ both have the property that their geodesic loops give a basis for their respective first homology vector spaces over some field, then $H_\mG = H_\mH$ implies $G$ and $H$ are homotopy equivalent. However, this property is not enjoyed by arbitrary metric graphs, so the general result cannot be obtained from this argument without more work. See Figure \ref{fig:distance_distributions_metric_graphs} for some examples.

An alternative proof strategy would be to construct Riemannian surfaces as tubular neighborhoods of $G$ and $H$ after embedding them into some Euclidean space. These tubular neighborhood surfaces would then have arbitrarily close (by taking the tube radii arbitrarily small) distance distributions with respect to geodesic distance, and one could attempt to apply Corollary \ref{cor:diffeotype_surfaces} to conclude that the surfaces (hence the original metric graphs) have the same Euler characteristic.
\end{remark}

\begin{figure}
    \centering
    \includegraphics[width = 0.8\textwidth]{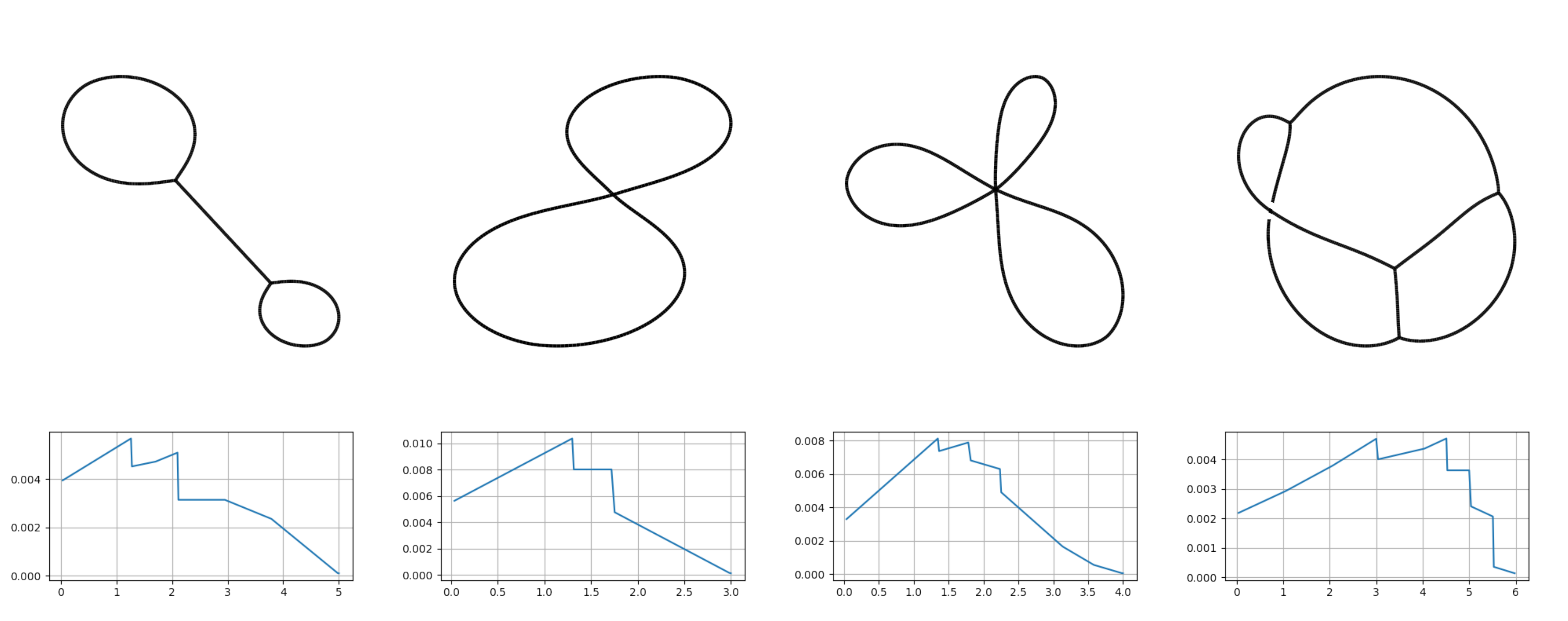}
    \caption{Distance distributions for metric graphs. The top row shows several examples of simple metric graphs and the bottom row shows their global shape measures. Observe that jump discontinuities correspond to geodesic loops in the graphs. The first two examples are homotopy equivalent and their distributions share the same number of jump discontinuities. The last two examples are homotopy equivalent, but have a different number of jump continuities in their distributions: the last example (which is homeomorphic to the 1-skeleton of a tetrahedron) has 4 geodesic loops, even though its first Betti number is 3. We see that counting discontinuities is not enough to show that metric graphs are distinguished up to homotopy type by their global distance distributions.}
    \label{fig:distance_distributions_metric_graphs}
\end{figure}

\subsubsection*{Proof of Lemma \ref{lem:vertex_cardinalities}}

The proof of the lemma will require some preliminary results.

\begin{lemma}\label{lem:wasserstein_bound}
Let $\mX$ and $\mY$ be mm-spaces and let $f:X \rightarrow \R$ and $g:Y \rightarrow \R$ be continuous maps. For each $\mu \in \mM(\mu_X,\mu_Y)$, we have
\begin{linenomath}\begin{equation*}
\int_{X \times Y} \left|f(x) - g(y)\right| \mu(dx \times dy) \geq d_{\mathrm{W},1}^\R(f_\#\mu_X,g_\# \mu_Y).
\end{equation*}\end{linenomath}
\end{lemma}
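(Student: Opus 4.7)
The plan is to push the coupling $\mu$ forward through $f \times g$ to obtain a coupling of the real-valued measures $f_\#\mu_X$ and $g_\#\mu_Y$ on $\R \times \R$, and then recognize the left-hand side as the standard transport cost against this coupling.

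First, I would consider the map $f \times g : X \times Y \rightarrow \R \times \R$ defined by $(x,y) \mapsto (f(x),g(y))$, and form the pushforward $\nu := (f \times g)_\# \mu$. The key computation is to verify that $\nu$ is an element of $\mU(f_\#\mu_X, g_\#\mu_Y)$. Letting $\pi_1, \pi_2$ denote the coordinate projections on $\R \times \R$, one checks that $\pi_1 \circ (f \times g) = f \circ \pi_X$, so
$$
(\pi_1)_\# \nu = (\pi_1 \circ (f \times g))_\# \mu = (f \circ \pi_X)_\# \mu = f_\# (\pi_X)_\# \mu = f_\# \mu_X,
$$
and the analogous computation yields $(\pi_2)_\# \nu = g_\# \mu_Y$.

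Having established that $\nu$ is a valid coupling, the change-of-variables formula for pushforward measures gives
$$
\int_{X \times Y} |f(x) - g(y)| \, \mu(dx \times dy) = \int_{\R \times \R} |s - t| \, \nu(ds \times dt).
$$
The right-hand side is the $1$-Wasserstein transport cost of $\nu$, which by definition \eqref{eqn:kantorovich_OT} is bounded below by $d_{\mathrm{W},1}^\R(f_\#\mu_X, g_\#\mu_Y)$, since the latter is the infimum of such costs over all couplings of these two real-valued measures. Chaining the equality with this inequality yields the claim.

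There is essentially no obstacle here; the proof is a routine application of the change-of-variables property of pushforward measures combined with the definition of the Wasserstein metric. The only subtle point is to make sure the marginals are computed correctly, but this follows immediately from the fact that $\mu$ has marginals $\mu_X$ and $\mu_Y$.
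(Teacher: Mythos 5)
Your proposal is correct and follows exactly the paper's argument: push $\mu$ forward under $f\times g$, verify that the resulting measure couples $f_\#\mu_X$ and $g_\#\mu_Y$, and apply the change-of-variables formula together with the definition of $d_{\mathrm{W},1}^\R$ as an infimum over couplings. You additionally spell out the marginal verification that the paper leaves as "easy to check," which is a fine addition.
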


\begin{proof}
Let $f \times g: X \times Y \rightarrow \R \times \R$ be the product map $(f \times g)(x,y) = (f(x),g(y))$ and consider the measure $(f \times g)_\# \mu \in \mathcal{P}(\R^2)$. It is easy to check that $(f \times g)_\# \mu$ is a measure coupling of $f_\# \mu_X$ and $g_\# \mu_Y$. The change of variables formula then implies that
\begin{linenomath}\begin{equation*}
\int_{X \times Y} \left|f(x) - g(y)\right| \mu(dx \times dy) = \int_{\R \times \R} \left|u-v\right| \big((f \times g)_\# \mu\big)(du \times dv) \geq d_{\mathrm{W},1}^\R(f_\#\mu_X,g_\# \mu_Y).
\end{equation*}\end{linenomath}
\end{proof}

For a mm-space $\mX$ and a fixed $r > 0$, let $h_\mX^r:X \rightarrow \R$ denote the function $h_\mX^r(x) = h_\mX(x,r)$. We will consider the one-parameter family of measures $\{(h_\mX^r)_\# \mu_X\}_{r \geq 0} \subset \mathcal{P}(\R)$. We note that this family of measures appears in the definition of the  \emph{modulus of mass distribution} studied in \cite{greven2009convergence}.

\begin{lemma}\label{lem:equality_of_pushforwards}
For mm-spaces $\mX$ and $\mY$, $L^{\mathrm{K}}_{\mathrm{h}}(\mX,\mY) = 0$ implies that $(h_\mX^r)_\# \mu_X = (h_\mY^r)_\# \mu_Y$ for almost every $r > 0$ (with respect to Lebesgue measure). 
\end{lemma}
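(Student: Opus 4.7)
The plan is to combine Fubini's theorem with Lemma \ref{lem:wasserstein_bound} so as to exchange the infimum over couplings with the integral over $t$. Suppose $L_h^\mathcal{U}(\mX,\mY)=0$, and pick a minimizing sequence $\mu_n\in\mathcal{U}(\mu_X,\mu_Y)$ with $\int_{X\times Y} c_{\mX,\mY}\,d\mu_n\to 0$. Since the integrand defining $c_{\mX,\mY}$ is nonnegative and jointly measurable in $(x,y,t)$ (and bounded, with effective support in $t$ contained in $[0,\max\{\mathrm{diam}(X),\mathrm{diam}(Y)\}]$), Fubini–Tonelli gives
\[
\int_0^\infty f_n(t)\,dt \;\longrightarrow\; 0,\qquad f_n(t) := \int_{X\times Y}\bigl|h_\mX^{\,t}(x)-h_\mY^{\,t}(y)\bigr|\,d\mu_n(x,y).
\]

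Next, for each fixed $t\geq 0$, I would apply Lemma \ref{lem:wasserstein_bound} with $f=h_\mX^{\,t}$ and $g=h_\mY^{\,t}$ to bound
\[
f_n(t)\;\geq\;d_{\mathrm{W},1}^{\R}\bigl((h_\mX^{\,t})_{\#}\mu_X,\;(h_\mY^{\,t})_{\#}\mu_Y\bigr)\;=:\;F(t).
\]
The decisive point is that $F(t)$ is \emph{independent of $n$}, so passing to the limit yields $\int_0^\infty F(t)\,dt\leq\liminf_n\int_0^\infty f_n(t)\,dt=0$. Nonnegativity of $F$ then forces $F(t)=0$ for Lebesgue-almost every $t$, and because $d_{\mathrm{W},1}^{\R}$ is a genuine metric on $\mathcal{P}_1(\R)$, this is exactly the desired equality of pushforward measures.

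The main obstacle I anticipate is bookkeeping for the regularity of $h_\mX^{\,t}$: Lemma \ref{lem:wasserstein_bound} is stated for \emph{continuous} $f$ and $g$, but in general $h_\mX^{\,t}:X\to[0,1]$ is only upper semi-continuous in $x$ (since $\limsup_n\overline{B_t(x_n)}\subseteq\overline{B_t(x)}$, and reverse Fatou applied to $\mu_X$ gives the claim). I expect this to be a non-issue because the lemma's proof only uses continuity to ensure the pushforward $(f\times g)_{\#}\mu$ is well-defined and compactly supported, and Borel measurability plus boundedness---both of which upper semi-continuous $[0,1]$-valued maps enjoy---are enough. If one prefers to avoid extending the lemma, an equivalent route is to observe that $c_{\mX,\mY}(x,y)$ coincides with the $1$-Wasserstein distance between the measures $(d_X(x,\cdot))_{\#}\mu_X$ and $(d_Y(y,\cdot))_{\#}\mu_Y$ on $\R$; continuity of $d_X$ on compact $X$ makes $(x,y)\mapsto c_{\mX,\mY}(x,y)$ continuous, so the infimum defining $L_h^\mathcal{U}$ is attained by some $\mu^*$, and one runs the same Fubini argument on $\mu^*$.
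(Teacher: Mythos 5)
Your proof is correct and follows essentially the same route as the paper's: apply Fubini to swap the $t$-integral with the integral over $X\times Y$, invoke Lemma \ref{lem:wasserstein_bound} pointwise in $t$, and conclude from the fact that $d_{\mathrm{W},1}^{\R}$ is a metric. Your side remark about $h_\mX^{\,t}$ being only upper semi-continuous in $x$ (whereas Lemma \ref{lem:wasserstein_bound} is stated for continuous maps) is a point of care the paper glosses over, and your resolution --- that Borel measurability and boundedness suffice for the lemma's proof --- is the right one.
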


\begin{proof}
If $L^{\mathrm{K}}_{\mathrm{h}}(\mX,\mY) = 0$ then for any $\epsilon > 0$ there exists a coupling $\mu \in \mM(\mu_X,\mu_Y)$ such that 
\begin{linenomath}\begin{align}
\epsilon &> \int_{X \times Y} \left(\int_0^\infty \left|h_\mX(x,r) - h_\mY(y,r)\right| \; dr \right) \mu(dx \times dy) \nonumber \\
&= \int_0^\infty \left(\int_{X \times Y} \left|h_\mX^r(x) - h_\mY^r(y)\right| \mu(dx \times dy) \right) \; dr \label{eqn:fubini}\\
&\geq \int_0^\infty d_{\mathrm{W},1}^\R\big((h_\mX^r)_\# \mu_X,(h_\mY^r)_\# \mu_Y\big) \; dr \label{eqn:apply_lemma},
\end{align}\end{linenomath}
where $\eqref{eqn:fubini}$ follows by Fubini's Theorem and \eqref{eqn:apply_lemma} follows from Lemma \ref{lem:wasserstein_bound}. We deduce that 
\begin{linenomath}\begin{equation*}
d_{\mathrm{W},1}^\R((h_\mX^r)_\#\mu_X,(h_\mY^r)_\#\mu_Y) = 0
\end{equation*}\end{linenomath}
for almost every $r > 0$. This completes the proof, since Wasserstein distance is a metric on the space of compactly supported probability measures on $\R$.
\end{proof}

\begin{figure}
    \centering
    \begin{overpic}[unit=1mm,scale=.6]{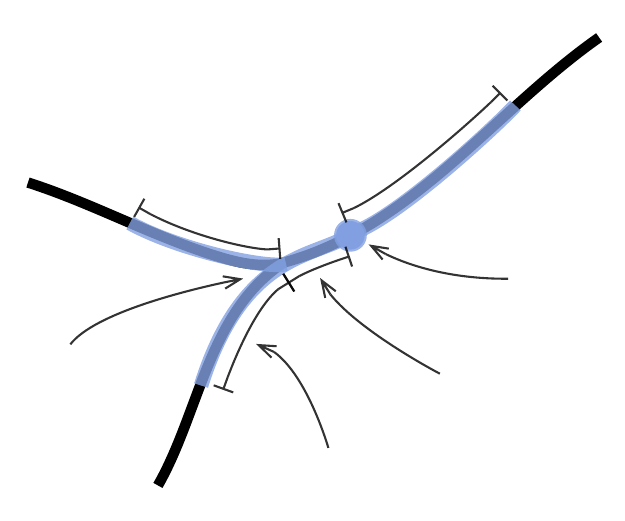}
      \put(8,26){\small $v$}
      \put(83,37){\small $x$}
      \put(72,20){\small $d_G(x,v)$}
      \put(33,47){\small $\delta$}
      \put(53,8){\small $\delta$}
      \put(63,59){\small $r$}
      \end{overpic}
    \caption{The figure shows a portion of a metric graph $G$, a point $x$ in the graph and the metric ball $B_G(x,r)$, highlighted in blue. The point $x$ is within distance $r$ from a vertex $v$. The indicated distances, with $\delta = r - d_G(x,v)$ illustrate the proof of Lemma \ref{lem:degree_formula_for_local_distribution}.}
    \label{fig:local_distribution_illustration}
\end{figure}

\begin{lemma}\label{lem:degree_formula_for_local_distribution}
Let $\mG$ be a metric graph, let $x \in G$ and let $r > 0$. Suppose that there exists exactly one vertex $v$ in $B_G(x,r)$. Then
\begin{linenomath}\begin{equation*}
h_\mG(x,r) = r + d_G(x,v) + \left(\mathrm{deg}(v)-1\right)(r - d_G(x,v)).
\end{equation*}\end{linenomath}
In particular, it holds that
\begin{linenomath}\begin{equation*}
h_\mG(x,r) = \mathrm{deg}(x) \cdot r
\end{equation*}\end{linenomath}
for any $x \in G$, provided $B_G(x,r)\setminus \{x\}$ contains no vertices (see Figure \ref{fig:local_distribution_illustration}).
\end{lemma}

\begin{proof}
The set $B_G(x,r) \setminus \{x,v\}$ is a disjoint union of intervals: $(\mathrm{deg}(v)-1)$ intervals contained in those edges which are incident on $v$ and which which do not contain $x$, each of which has length $r - d_G(x,v)$, one interval of length $d_G(x,v)$ connecting $v$ to $x$, and one interval of length $r$ on the opposite side of $x$ from $v$---see Figure \ref{fig:local_distribution_illustration}. The formula follows: keeping in mind our convention in the construction of $\mu_G$ that all metric graphs have unit total edge length, we have
\begin{linenomath}\begin{equation*}
h_\mG(x,r) = \mu_G(\overline{B_G(x,r)}) = \mu_G(B_G(x,r) \setminus \{x,y\})= r + d_G(x,v) + \left(\mathrm{deg}(v)-1\right)(r - d_G(x,v)).
\end{equation*}\end{linenomath}
The last statement of the lemma holds in the case that $x$ is a vertex by setting $x=v$. On the other hand, if $x$ is not a vertex (i.e. $\mathrm{deg}(x)=2$) and $B_G(x,r)$ contains no vertex then the formula follows by a similar argument.
\end{proof}

We are now prepared to prove the main technical lemma.

\begin{proof}[Proof of Lemma \ref{lem:vertex_cardinalities}]
Suppose that $L^{\mathrm{K}}_{\mathrm{h}}(\mG,\mH) = 0$. Lemma \ref{lem:equality_of_pushforwards} implies that we can choose an $r$ which is less than half the length of the shortest edge in $G$ or $H$ such that $(h_\mG^r)_\# \mu_G = (h_\mH^r)_\# \mu_H$.

We first show that the number of degree-1 points of $G$ can be recovered  from $(h_\mG^r)_\# \mu_G$ by considering
\begin{linenomath}\begin{equation*}
(h_\mG^r)_\# \mu_G ((0,2r)) = \mu_G \left(\{x \in G \mid h_\mG(x,r) < 2r\}\right). 
\end{equation*}\end{linenomath}
By our choice of $r$, an $r$-ball around any point $x \in G$ contains at most one vertex. By Lemma \ref{lem:degree_formula_for_local_distribution}, $h_\mG(x,r) < 2r$ is only possible if $B_G(x,r)$ contains a degree-1 vertex; that is, $h_\mG(x,r) < 2r$ if and only if $d_G(x,\mN_1(\mG)) < r$. It follows that
\begin{linenomath}\begin{equation*}
(h_\mG^r)_\# \mu_G ((0,2r)) =\mu_G \left(\{x \in G \mid d_G(x,\mN_1(\mG)) < r\}\right) = | \mN_1(\mG)| \cdot r.
\end{equation*}\end{linenomath}
Therefore $|\mN_1(\mG)| = |\mN_1(\mH)|$.

A similar strategy works for vertices of higher degree. Let $k_G$ denote the maximum vertex degree of $G$. We claim that for each $k\geq 3$, the quantity $\left(h_\mG^r\right)_\# \mu_G(((k-1)\cdot r, k \cdot r))$ is given by
\begin{equation}\label{eqn:vertex_counting_formula}
\frac{r}{k-2}\cdot  | \mN_k(\mG)| + \frac{r}{(k+1)-2} \cdot |\mN_{k+1}(\mG)| + \cdots + \frac{r}{k_{G}-2} \cdot | \mN_{k_G}(\mG)|,
\end{equation}
when $k \leq k_G$ and that it is equal to zero otherwise. Assuming that the claim holds, the number of vertices of each degree of $G$ can be recovered recursively from $(h_\mG^r)_\# \mu_G$, and this completes the proof. 

It remains to derive \eqref{eqn:vertex_counting_formula}. By definition, 
\begin{linenomath}\begin{equation*}
\left(h_\mG^r\right)_\# \mu_G (((k-1)\cdot r, k \cdot r)) = \mu_G \left(\left\{x \in G \mid (k-1) \cdot r <  \mu_G(B_G(x,r)) < k\cdot r\right\}\right). 
\end{equation*}\end{linenomath}
By our choice of $r$ and Lemma \ref{lem:degree_formula_for_local_distribution}, the maximum value of $\mu_G(B_G(x,r))$ is $r \cdot k_G$, when $x$ is a vertex of degree $k_G$. It follows that $\left(h_\mG^r\right)_\# \mu_G((k-1)\cdot r, k \cdot r) = 0$ when $k \geq k_G + 1$. We then consider $k$ with $3 \leq k \leq k_G$. A  point $x \in G$ satisfies $(k-1) \cdot r < \mu_G(B_G(x,r)) < k\cdot r$ if and only if it satisfies one of the following mutually exclusive conditions, indexed by $\ell = k, k+1, \ldots, k_G$:
\begin{equation}\tag{$C_\ell$}\label{eqn:condition_ell}
\frac{\ell-k}{\ell-2} r < d_G(x, \mN_\ell(\mG)) < \frac{\ell-(k-1)}{\ell-2} r.
\end{equation}
To see this, note that if $\mu_G(B_G(x,r)) > 2r$, then $x$ must lie within distance $r$ from a (unique) vertex $v$ of degree $\ell \geq 3$. Applying Lemma \ref{lem:degree_formula_for_local_distribution} and our assumed bounds on $\mu_G(B_G(x,r))$, we have
\begin{linenomath}\begin{equation*}
(k-1) \cdot r < r + d_G(x,v) + (\ell - 1)(r - d_G(x,v)) <  kr
\end{equation*}\end{linenomath}
and solving for $d_G(x,v)$ shows that condition \eqref{eqn:condition_ell} must hold. The set of points satisfying \eqref{eqn:condition_ell} has measure
\begin{linenomath}\begin{equation*}
| \mN_\ell(\mG)| \cdot \left(\frac{\ell-(k-1)}{\ell-2} r - \frac{\ell-k}{\ell-2} r\right) = | \mN_\ell(\mG)| \cdot \frac{r}{\ell-2}.
\end{equation*}\end{linenomath}
Adding up these measures for $\ell = k, k+1,\ldots,k_G$, we obtain \eqref{eqn:vertex_counting_formula}.
\end{proof}

\subsection{Injectivity of the Local Distance Distribution}\label{sec:continuous_mappings_trees}

In general, $L^{\mathrm{K}}_{\mathrm{h}}$ can only distinguish metric graphs up to homotopy equivalence, but if we restrict to the category whose morphisms are continuous measure-preserving maps then we have the following strengthening, which says that local distance distributions are able to distinguish metric graphs up to isomorphism in this category.

\begin{theorem}[Global Injectivity of Local Distance Distributions for Metric Graphs (\globalInj)]\label{thm:continuous_maps_graphs}
Let $\mathcal{G}$ and $\mathcal{H}$ be metric graphs. Suppose that there exists a continuous measure-preserving map $\phi:G \rightarrow H$ such that
$h_\mathcal{G}(x,r) = h_\mathcal{H}(\phi(x),r)$
for all $x \in G$ and $r \geq 0$. Then $\mG \approx \mH$.
\end{theorem}

We prove the theorem through a sequence of lemmas. Each lemma statement assumes the setup of the theorem.

\begin{lemma}\label{lem:continuous_map_surjective}
The map $\phi$ is surjective.
\end{lemma}

\begin{proof}
Suppose that $y \in H$ does not lie in the image of $\phi$. Since $\phi$ is continuous, it takes $G$ to a compact subset of $H$, which is necessarily closed. Then there is some open neighborhood $U$ of $y$ not contained in the image of $\phi$. This open neighborhood has positive measure, but $\phi^{-1}(U)$ is empty, contradicting the measure-preserving assumption. 
\end{proof}

We define the \emph{cycle graph} $\mathcal{C} = (C,d_{C},\mu_{C})$ to be the metric graph which is isomorphic to the circle of of length one with its geodesic metric and arclength measure. An underlying combinatorial graph for $\mathcal{C}$ is given by $(V = \{v\}, E = \{(v,v)\})$: in other words $(V,E)$ consists of a single point and a self loop. All points in the cycle graph have degree-2; the cycle graph is the unique metric graph with this property.

\begin{lemma}\label{lem:continuous_map_cycle_graph}
Theorem \ref{thm:homotopy_type_metric_graphs} holds in the case that $\mG \approx \mathcal{C}$. 
\end{lemma}

\begin{proof}
Since all points in $G$ are degree-2, it follows from Lemma \ref{lem:degree_formula_for_local_distribution} that all points in $H$ are also degree-2, hence that $\mG \approx \mH$. (We show below in Section \ref{sec:sphere_characterization_metric_graphs} that the cycle graph is characterized by its \emph{global} distance distribution.)
\end{proof}

The main technicality of the proof of Theorem \ref{thm:continuous_maps_graphs} is to establish the following.

\begin{lemma}\label{lem:continuous_map_injective}
If $\mG \not \approx \mathcal{C}$ then $\phi$ is injective.
\end{lemma}

We will prove the lemma after finishing the proof of the theorem.

\begin{proof}[Proof of Theorem \ref{thm:continuous_maps_graphs}]
The case where $\mG \approx \mathcal{C}$ is handled by Lemma \ref{lem:continuous_map_cycle_graph}, so assume $\mG \not \approx \mathcal{C}$. Then Lemmas \ref{lem:continuous_map_surjective} and \ref{lem:continuous_map_injective} imply that $\phi$ is a bijection. Since $G$ and $H$ are uncountable, separable, nonatomic mm-spaces, $\phi^{-1}$ is also measure-preserving \cite[Proposition A.4]{lovasz2012large}

It remains to show that $\phi$ is an isometry. Let $x,x' \in G$ and $J \subset G$ be the image of a path realizing $d_G(x,x')$, as in \ref{eqn:geodesic_distance_formula}. Then
\begin{linenomath}\begin{equation*}
d_H(\phi(x),\phi(x')) \leq \mu_H\left(\phi(J)\right) = \mu_G \left(\phi^{-1} \left( \phi(J)\right)\right) = \mu_G(J) = d_G(x,x'),
\end{equation*}\end{linenomath} 
where the first inequality follows by \eqref{eqn:measurable_subset_inequality}. Therefore $\phi$ is a $1$-Lipschitz map. Running the same argument on $\phi^{-1}$, we see that it is also $1$-Lipschitz. It follows that $\phi$ is an isometry and this completes the proof.
\end{proof}

\subsubsection*{Proof of Lemma \ref{lem:continuous_map_injective}}

\begin{figure}
\begin{center}
    \includegraphics[width = 0.9\textwidth]{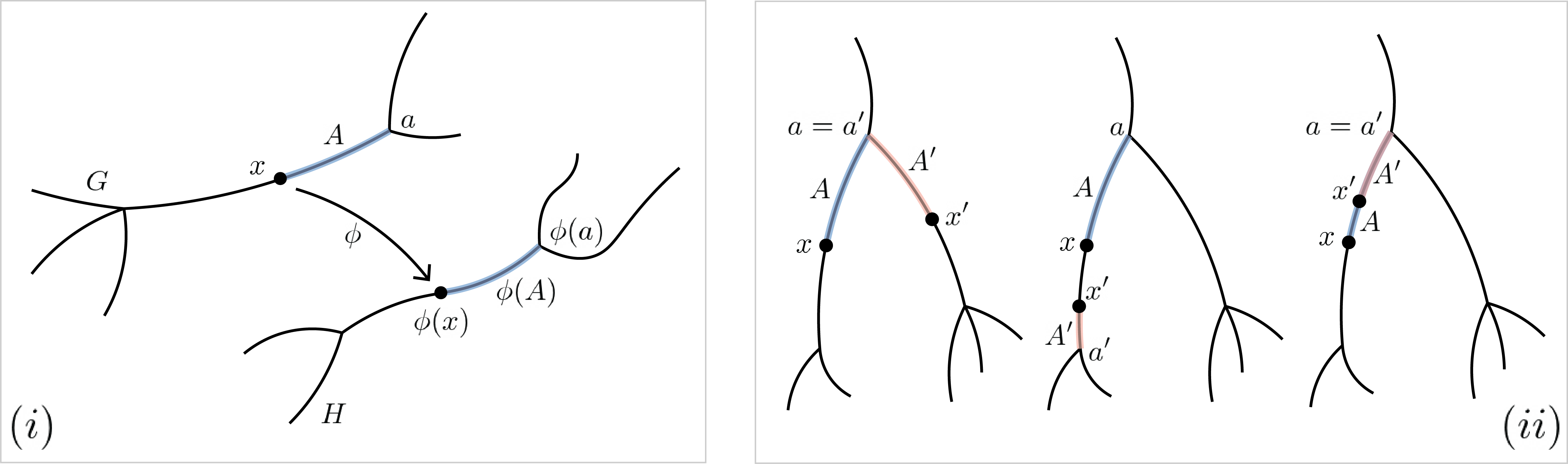}
\end{center}
\caption{Illustrations for the proof of Lemma \ref{lem:continuous_map_injective}. See text for details.}\label{fig:continuous_injectivity}
\end{figure}

The proof of Lemma \ref{lem:continuous_map_injective} will follow from several auxilliary results. For the rest of the subsection, we assume that $\mG$ is not a cycle graph. In particular, $G$ contains a vertex. For a point $x \in \mN_2(\mG)$, let $I \in \mE(\mG)$ be such that $x \in I$, let $\{a,b\} = \overline{I} \cap \mN(\mG)$ be the endpoints of $I$---these endpoints exist by the assumption that $\mG$ is not a cycle graph. The set $I \setminus \{x\}$ consists of  two connected components, each homeomorphic to an open interval. Let the closure of the shorter of these two components be denoted $A$ (if the components are the same length, choose one arbitrarily), and assume without loss of generality that $a \in A$.

\begin{remark}
As a technical point, note that it is possible that $a = b$: even though $\mG \not \approx \mathcal{C}$, it is still possible for $G$ to contain a self-loop, attached at vertex $a$. In this case, we still have $\mathrm{deg}(a) \neq 2$, and the following arguments go through.
\end{remark}

\begin{lemma}\label{lem:interval_image}
Let $x \in \mN_2(\mG)$, $a$ and $A$ as above, and let $y = \phi(x)$. Then $\phi|_A$ is a homeomorphism onto its image $\phi(A)$ (which is then homeomorphic to a closed interval). Moreover, the endpoints of $\phi(A)$ are $y \in \mN_2(\mH)$ and $\phi(a) \in \mN(\mH)$, with $\mathrm{deg}(\phi(a)) = \mathrm{deg}(a)$. For all $z \in A\setminus\{a\}$, $\phi(z) \in \mN_2(\mH)$. (See Figure \ref{fig:continuous_injectivity} (i).)
\end{lemma}

\begin{proof}
For any $z \in A \setminus \{a\}$, $z \in \mN_2(\mG)$ and it follows from Lemma \ref{lem:degree_formula_for_local_distribution} that $\phi(z) \in \mN_2(\mH)$, since $h_\mG(z,r) = h_\mH(\phi(z),r)$. By the same reasoning, $\phi(a)$ is a vertex of $H$ with the same degree as $a$. Next, we observe that $\phi|_A$ is injective: if $\phi(z_1) = \phi(z_2)$ for some $z_1, z_2 \in A$, then 
\begin{linenomath}\begin{equation*}
h_\mathcal{H}(\phi(z_1),r) = h_\mH (\phi(z_2),r ) \; \forall \, r \Rightarrow h_\mathcal{G}(z_1,r) = h_\mG (z_2,r) \; \forall \, r,
\end{equation*}\end{linenomath}
but each point in $A$ has a \emph{unique} local distance distribution. Indeed, this follows because  Lemma \ref{lem:degree_formula_for_local_distribution} says that $h_\mG(z,r) = 2r$ for all $r$ such that $B_G(z,r)$ doesn't contain a vertex; since $a$ is the closest vertex to any $z \in A$ (by our choice of $A$ as the shortest segment joining $x$ to  a vertex), we have $d_G(z,a) = \inf\{r \geq 0 \mid h_{\mG}(z,r) \neq 2r\}$. Now observe that, since $A$ is isometric to a closed interval,  distinct $z_1,z_2 \in A$ have $d_G(z_1,a) \neq d_G(z_2,a)$ (i.e., one of $z_1$ or $z_2$ is closer to $a$ than the other). This implies that $z_1 = z_2$. Then $\phi|_A$ is a bijective map from the compact space $A$ to the Hausdorff space $\phi(A)$ and it follows that it is a homeomorphism \cite[Theorem 26.6]{munkres2014topology}.
\end{proof}

\begin{lemma}\label{lem:interval_measure}
With the above setup, $\mu_G(A) = \mu_H(\phi(A))$.
\end{lemma}

\begin{proof}
Let $\alpha := \mu_G(A)$ and $\gamma := \mu_H(\phi(A))$. The measure-preserving assumption on $\phi$ implies
\begin{linenomath}\begin{equation*}
\gamma = \mu_H\left(\phi(A)\right) = \mu_G\left(\phi^{-1}\left(\phi(A)\right)\right) \geq \mu_G(A) = \alpha.
\end{equation*}\end{linenomath}
On the other hand, we can show that $\gamma > \alpha$ gives a contradiction.  

To derive the contradiction, we first show that $B_H(y,\alpha)$ contains no vertices. Observe that, since $A$ is the shortest edge segment joining $x$ to a vertex, it must be that $B_G(x,\alpha)$ contains no vertices. By Lemma \ref{lem:degree_formula_for_local_distribution}, $h_\mG(x,r) = 2r$ for all $r < \alpha$ and it follows that $h_\mH(y,r) = 2r$ for all $r < \alpha$. Now, to obtain a contradiction, suppose that the nearest vertex $v$ to $y$ is at distance $\epsilon := d_H(y,v) < \alpha$. If $v$ were the unique vertex at distance $\epsilon$ from $y$, Lemma \ref{lem:degree_formula_for_local_distribution} would imply that for all sufficiently small radii $r > 0$,
\begin{linenomath}\begin{equation*}
h_\mH(y,\epsilon + r) = \epsilon + r + \epsilon + (\mathrm{deg}(v)-1)(\epsilon + r -\epsilon) = 2\epsilon + \mathrm{deg}(v)r,
\end{equation*}\end{linenomath}
which is not identically equal to $2r$ (as a function of $r$), giving a contradiction. Since $y$ is degree-2 and there are no vertices at distance less than $\epsilon$ from $y$, the only remaining option is that there are \emph{exactly} two vertices at distance $\epsilon$ from $y$; in particular, with $a$ denoting the endpoint of $A$ as above, it must be that $d_H(y,\phi(a)) = \epsilon$. Then, for any $y' \in \phi(A)$, there is $r < \epsilon$ such that $h_\mH(y',r) \neq 2r$. On the other hand, continuity allows us to choose $x' \in A$ close enough to $x$ such that $h_\mG(x',r) = 2r$ for all $r < \epsilon$, and this gives a contradiction to Lemma \ref{lem:degree_formula_for_local_distribution} by setting $y' = \phi(x')$. 

We now show that $\gamma > \alpha$ gives a contradiction. In this case, choose via continuity a point $z \in A \setminus \{x\}$ such that $0 < \delta:=d_H(y,\phi(z)) < \gamma - \alpha$. Since $A$ is the shortest edge segment between $x$ and a vertex, it must be that $d_G(z,a) < \alpha$, so that the ball $B_G(z,\alpha)$ contains a vertex. By taking $z$ sufficiently close to $x$, we can assume that this vertex is unique. It then follows from Lemma \ref{lem:degree_formula_for_local_distribution} that $h_\mG(z,r) \neq 2r$ for some $r < \alpha$. We will show that $B_H(\phi(z),\alpha)$ does not contain a vertex, which implies that $h_\mH(\phi(z),r) = 2r$ for all $r < \alpha$ and gives a contradiction. Indeed, let $J$ be a path from $\phi(z)$ to a vertex. If $y \in J$, then $J$ has length at least $d_H(y,\mN(\mH)) + \delta$. On the other hand, if $y \not \in J$ then $J$ has length at least $\gamma - \delta$ (the shortest such path is from $\phi(z)$ to $\phi(a)$). That is,
\begin{linenomath}\begin{equation*}
d_H(\phi(z),\mN(\mH)) = \min \{d_H(y,\mN(\mH)) + \delta, \gamma - \delta\}.
\end{equation*}\end{linenomath}
Since $B_H(y,\alpha)$ contains no vertices, by the previous paragraph, we have $d_H(y,\mN(\mH)) > \alpha$. Since $\gamma - \delta > \alpha$, we deduce that $d_H(\phi(z),\mN(\mH)) > \alpha$ and this gives us the desired contradiction.
\end{proof}

\begin{lemma}\label{lem:continuous_maps_injective_degree_2}
If $x,x' \in \mN_2(\mG)$ satisfy $\phi(x) = \phi(x')$ then $x = x'$.
\end{lemma}

\begin{proof}
Suppose, to obtain a contradiction, that there exist distinct points $x,x' \in \mN_2(\mG)$ with $\phi(x) = \phi(x') =: y$. Let $a \in \mN(\mG)$ and $A \subset G$ be as above. Similarly, let $I' \in \mE(\mG)$ with $x' \in I'$, let $a'$ and $b'$ be the endpoints of $I'$, let $A'$ be the closure of the shorter of the two components of $I' \setminus \{x'\}$ and assume that $a' \in A'$. Lemmas \ref{lem:interval_image} and \ref{lem:interval_measure} then imply that $\phi|_{A'}$ is a homeomorphism onto its image $\phi(A')$, which is a closed segment in $H$ with endpoints $y$ and $\phi(a') \in \mN(\mH)$ such that $\mu_H(\phi(A')) = \mu_G(A')$.

We will derive our contradiction in two separate cases: either $\phi(A) = \phi(A')$ or not. 

\smallskip
\noindent {\bf Case 1: $\phi(A) = \phi(A')$.} Let $\alpha:= \mu_G(A)$; in this case, we also have $\mu_{G}(A') = \alpha$, by Lemma \ref{lem:interval_measure}. Then
\begin{linenomath}\begin{align*}
\alpha = \mu_H \left(\phi(A)\right) = \mu_G \left(\phi^{-1} \left(\phi(A)\right)\right) &\geq \mu_G\left(A \cup A'\right) \\
&= \mu_G(A) + \mu_G(A') - \mu_G \left(A \cap A'\right) = 2\alpha - \mu_G \left(A \cap A'\right) > \alpha,
\end{align*}\end{linenomath}
which gives a contradiction once we justify the last inequality. We claim that $\mu_G(A \cap A') < \alpha$. To see this, consider three cases (illustrated in Figure \ref{fig:continuous_injectivity}, (ii)). If $I \neq I'$ (i.e., $x$ and $x'$ do not belong to the same edge), then $A \cap A'$ contains at most one point (this happens if $a = a'$), so we have $\mu_G(A \cap A') = 0$. If $I = I'$, but $A$ and $A'$ are disjoint, then the claim obviously follows. Finally, suppose that $I = I'$ and $A \cap A' \neq \emptyset$. Since $A$ (respectively, $A'$) is the shortest segment connecting $x$ (respectively, $x'$) to a vertex, it must be that $A \subset A'$ or $A' \subset A$---assume without loss of generality that $A' \subset A$. Since $x \neq x'$, $A \setminus A'$ is positive measure, but this is impossible under the assumption that $\mu_G(A) = \mu_G(A')$ and this case is ruled out. This yields a contradiction in the case that $\phi(A) = \phi(A')$.

\smallskip
\noindent {\bf Case 2: $\phi(A) \neq \phi(A')$.} Each of $\phi(A)$ and $\phi(A')$ is a closed segment in $H$ with $y$ as an endpoint and a vertex as the other endpoint. Since $\phi(A) \neq \phi(A')$, it must be that $\phi(A) \cup \phi(A')$ is equal to the closure of the edge containing $y$. Moreover, $\phi(A) \cap \phi(A')$ is measure zero: it is either equal to $\{y\}$ or to $\{y,\phi(a)\}$ (in the case that the edge containing $y$ forms a loop and $\phi(a) = \phi(a')$). We also have that $A \cap A'$ is measure zero---this follows by a case analysis similar to the previous paragraph: if $I \neq I'$ then $A \cap A'$ contains at most one point (if $a = a'$), if $I = I'$ and $A$ and $A'$ are disjoint then the claim follows, and the case that $I = I'$ and $A \cap A' \neq \emptyset$ is ruled out because this would imply that (without loss of generality) $A' \subset A$, hence $\phi(A) \cap \phi(A') = \phi(A)$ does not have measure zero and this contradicts the previous sentence. 

By continuity, there exists $\epsilon > 0$ such that $B:=B_G(x,\epsilon)$ and $B':=B_G(x',\epsilon)$ both get mapped into $\phi(A) \cup \phi(A')$. Indeed, choose a small open set $U$ around $y$ which is contained in the edge $\phi(A) \cup \phi(A')$. Then $x,x' \in \phi^{-1}(U)$, so we can choose $\epsilon$ such that $\phi(B),\phi(B') \subset U$. Since $x \neq x'$, we can also choose $\epsilon$ small enough so that $B \cap B' = \emptyset$. Let $\alpha' := \mu_G(A')$ and assume without loss of generality that $\epsilon < \min\{\alpha,\alpha'\}$. We also have $A \cap B' = \emptyset$. This follows once again by a case analysis: if $I \neq I'$ then $\epsilon < \min\{\alpha,\alpha'\}$ implies $a' \not \in B'$, so that $A \cap B' = \emptyset$; if $I = I'$ but $A$ and $A'$ do not overlap, then the claim follows by taking $\epsilon < d_G(x,x')/2$ and the case that $I=I'$ and $A \cap A' \neq \emptyset$ has already been ruled out. Similarly, $A' \cap B = \emptyset$. Then
\begin{linenomath}\begin{align}
    \alpha + \alpha' &= \mu_H \left(\phi(A) \cup \phi(A')\right) = \mu_G \left(\phi^{-1} \left(\phi(A) \cup \phi(A') \right)\right) \nonumber \\
    &\geq \mu_G(A \cup B \cup A' \cup B') \nonumber \\
    &= \mu_G(A \cup B) + \mu_G(A' \cup B') - \mu_G\left((A \cup B) \cap (A' \cup B')\right) \nonumber \\
    &= \alpha + \epsilon + \alpha' + \epsilon - \mu_G\left((A \cup B) \cap (A' \cup B')\right) \label{eqn:injectivity_proof1}\\
    &= \alpha + \alpha' + 2\epsilon. \label{eqn:injectivity_proof2}
\end{align}\end{linenomath}
This gives a contradiction once we justify the last two steps. Since $A$ is the shortest segment joining $x$ to a vertex and $\epsilon < \alpha = \mu_G(A)$, $B$ is contained entirely in the edge of $x$ and therefore has measure $2\epsilon$. Half of this mass overlaps with the segment $A$, so that $\mu_G(A \cup B) = \alpha + \epsilon$. Likewise, $\mu_G(A' \cup B') = \alpha' + \epsilon$, and this justifies \eqref{eqn:injectivity_proof1}. By the work above, $A \cap A'$ is measure zero and $B \cap B'$, $A \cap B'$ and $A' \cap B$ are all empty. It follows that $(A \cup B) \cap (A' \cup B')$ has measure zero, and this justifies \eqref{eqn:injectivity_proof2}. We have therefore derived a contradiction in this case and it must be that $x = x'$. 
\end{proof}

We are now prepared to prove the main lemma.

\begin{proof}[Proof of Lemma \ref{lem:continuous_map_injective}]
To obtain a contradiction, suppose that $x,x'$ are distinct points in $G$ which map to a common point $y \in H$. It follows from Lemma \ref{lem:degree_formula_for_local_distribution} that $\mathrm{deg}(x)=\mathrm{deg}(x')=\mathrm{deg}(y)$. The case where this common degree is 2 is handled by Lemma \ref{lem:continuous_maps_injective_degree_2}, so suppose that $x$, $x'$ and $y$ are all vertices of the same degree. We claim that, by continuity, we can reduce to the degree-2 case. Consider an open ball $B_H(y,r)$, with $r$ small enough so that $B_H(y,r) \setminus \{y\}$ is a union of open intervals $I_1,\ldots,I_{\mathrm{deg}(y)}$---we identify each $I_i$ with $(0,r)$, so that $B_H(y,r)$ is the union of $\mathrm{deg}(y)$ half open intervals $[0,r)$, glued together along $0$. Choose $s > 0$ small enough so that $B_G(x,s) \subset \phi^{-1}(B_H(y,r))$ and $B_G(x,s) \setminus \{x\}$ is a union of open intervals $J_1,\ldots,J_{\mathrm{deg}(x)}$---we similarly identify each $J_j$ with $(0,s)$ so that $B_G(x,s)$ is the union of $\mathrm{deg}(x) = \mathrm{deg}(y)$ half open intervals $[0,s)$, glued together at $0$. Since $\phi$ is continuous, the image of each $J_j$ is connected, and since $\phi$ preserves degree (once again, by Lemma \ref{lem:degree_formula_for_local_distribution}), $\phi(J_j)$ it is contained in some edge interval $I_i$. Moreover, $\phi(x) = y$ implies that the image of $J_j$ contains some interval $(0,r_j) \subset (0,r) \approx I_i$. If there are distinct edge intervals $J_j$ and $J_k$ such that $\phi(J_j)$ and $\phi(J_k)$ map into the same $I_i$, then we are done---for any $y_0 \in I_i$ with $d_H(y_0,y) < \min\{r_j,r_k\}$, $y_0 \in \phi(J_j) \cap \phi(J_k)$, so there exist degree-2 points $x_0 \in J_j$ and $x_0' \in J_k$ such that $\phi(x_0) = \phi(x_0') = y_0$. Supposing otherwise (i.e., $\phi(J_j) \cap \phi(J_k) = \emptyset$ for all $j \neq k$), the same arguments apply to a small ball $B_G(x',s)$ (shrinking $s$, if necessary). Let $J_1',\ldots,J_{\mathrm{deg}(x)}'$ denote the edge intervals in the corresponding decomposition of $B_G(x',s) \setminus \{x'\}$. By the pigenonhole principle, there must be some $J_j$ and $J_k'$ such that $\phi(J_j)$ and $\phi(J_k')$ are subsets of a common edge interval $I_i$. Once again, this implies that there exist degree-2 points $x_0 \in J_j$, $x_0 \in J_k'$ and $y_0 \in I_i$ such that $\phi(x_0) = \phi(x_0') = y_0$. We have therefore established that the argument reduces to the case of Lemma \ref{lem:continuous_maps_injective_degree_2}.
\end{proof}

\subsection{Sphere Characterization for Metric Graphs}\label{sec:sphere_characterization_metric_graphs}

In this subsection we address the sphere characterization inverse problem (\spheres) in the setting of metric graphs. In this category, the appropriate notion of a sphere is the \emph{cycle graph} $\mathcal{C}$---see the definition above, prior to Lemma \ref{lem:continuous_map_cycle_graph}.

\begin{theorem}[Sphere Characterization for Metric Graphs (\spheres)]\label{thm:sphere_characterization_metric_graphs}
If a metric graph $\mG$ has the same global distance distribution as the cycle graph then it is isomorphic to the cycle graph.
\end{theorem}

\begin{remark}
Sturm shows in \cite[Proposition 8.5]{sturm2012space} that (using our terminology) the cycle graph is distinguished from all other ``balanced" length spaces by its \emph{local} distance distribution. The term \emph{balanced} means that the local distance distribution of the length space is basepoint independent. The theorem (and its corollary) above  considers a more restrictive category (metric graphs versus length spaces), but does not assume the balanced condition and moreover uses the much weaker assumption of equality of \emph{global} distance distributions.
\end{remark}

The proof of the theorem will use some extra terminology and preliminary results. We define a combinatorial invariant of $\mG$ by
\begin{linenomath}\begin{equation*}
q(\mG) := \left\{\begin{array}{cc}
\sum_{v \in \mN(\mG)} \mathrm{deg}(v)^2 - 4|\mE(\mG)| & \mbox{ if $\mathcal{G} \not \approx \mathcal{C}$} \\
0 & \mbox{ if $\mathcal{G} \approx \mathcal{C}$}.
\end{array}\right.
\end{equation*}\end{linenomath}
The necessity of the cases in the definition comes from the fact that $\mathcal{C}$ is the unique metric graph with $\mN(\mG) = \emptyset$.

\begin{lemma}\label{lem:distance_distribution_graph}
Let $\mG$ be a metric graph with shortest edge length $r_0$. For $r < \frac{r_0}{2}$, the global distance distribution of $\mG$ is given by
\begin{linenomath}\begin{equation*}
H_\mG(r) = 2r + \frac{r^2}{2} \cdot q(\mG).
\end{equation*}\end{linenomath}
\end{lemma}

\begin{proof}
The proof is a computation. The formula is straightforward if $\mG$ is a cycle graph, so assume not. If $x \in G$ lies within distance $r$ of a vertex $v \in \mN(\mG)$, then this vertex is unique, so Lemma \ref{lem:degree_formula_for_local_distribution} implies
\begin{linenomath}\begin{equation*}
h_\mG(x,r) = r + d_G(x,v) + (r-d_G(x,v))(\mathrm{deg}(v) - 1) = 2 d_G(x,v) + (r - d_G(x,v)) \mathrm{deg}(v)
\end{equation*}\end{linenomath}
and otherwise $h_\mG(x,r) = 2r$. Therefore
\begin{linenomath}\begin{align}
H_\mG(r) &= \int_G h_\mG(x,r) \,  \mu_G(dx) \nonumber \\
&= \int_{d_G(x,\mN(\mG)) > r} h_\mG(x,r) \,  \mu_G(dx) + \sum_{v \in \mN(\mG)} \int_{d_G(x,v) \leq r} h_\mG(x,r) \,  \mu_G(dx) \nonumber \\
&= \int_{d_G(x,\mN(\mG)) > r} 2r  \,  \mu_G(dx) + \sum_{v \in \mN(\mG)} \int_{d_G(x,v) \leq r} 2 d_G(x,v) + (r - d_G(x,v)) \mathrm{deg}(v)  \,  \mu_G(dx) \nonumber \\
&= 2r\left(1 - r \sum_{v \in \mN(\mG)} \mathrm{deg}(v)\right) + \sum_{v \in \mN(\mG)} \int_0^r \left(  2 \rho + (r - \rho) \mathrm{deg}(v)  \right)\mathrm{deg}(v) \, d\rho \label{eqn:distance_distribution_graph_1}  \\
&= 2r - 4 r^2 |\mE(\mG)| +  \sum_{v \in \mN(\mG)} \left(r^2 \mathrm{deg}(v) + \frac{r^2}{2} \mathrm{deg}(v)^2\right) \label{eqn:distance_distribution_graph_2} \\
&= 2r + \frac{r^2}{2} \left(\sum_{v \in \mN(\mG)} \mathrm{deg}(v)^2 - 4|\mE(\mG)| \right), \label{eqn:distance_distribution_graph_3}
\end{align}\end{linenomath}
where the first term in \eqref{eqn:distance_distribution_graph_1} comes from the total length of the set $\{x \in G \mid d_G(x,\mN(\mG)) \leq r\}$ and the second term comes from the change of variables $\rho = d_G(x,v)$ for each fixed $v$; this allows us to rewrite the integral in the second term as an integral over each segment emanating from the vertex $v$. The first term in \eqref{eqn:distance_distribution_graph_2} comes from \eqref{eqn:sum_of_degrees}. Finally, \eqref{eqn:distance_distribution_graph_3} follows by applying \eqref{eqn:sum_of_degrees} again and simplifying.
\end{proof}

We now focus on the case of metric trees (i.e., contractible metric graphs). There are two families of metric trees which play a special role. A \emph{line graph} is a metric tree which is isomorphic to a closed interval. A \emph{$Y$-graph} is a metric tree which is shaped like a ``Y"; that is, which can be constructed from the underlying combinatorial graph with data $V = \{u,v,w,x\}$ and  $E = \{(u,x),(v,x),(w,x)\}$.

\begin{lemma}\label{lem:metric_tree_q}
For a metric tree $\mT$, $q(\mT) = 0$ if and only if $\mT$ is a $Y$-graph.
\end{lemma}

We prove the lemma below, but let us first finish the proof of the theorem.

\begin{proof}[Proof of Theorem \ref{thm:sphere_characterization_metric_graphs}]

The proof follows by checking two cases, which depend on the concept of Betti number \eqref{eqn:betti_1}. First suppose that $\beta_1(\mG) \geq 1$. If $H_\mG = H_{\mathcal{C}}$ then Lemma \ref{lem:distance_distribution_graph} implies that $q(\mG) = q(\mathcal{C}) = 0$. If it were the case that $\mG \not \approx \mathcal{C}$ then we would have
\begin{linenomath}\begin{align}
0 &= q(\mG) = \sum_{v \in \mN(\mG)} \mathrm{deg}(v)^2 - 4|\mE(\mG)| \geq \frac{\left(\sum_{v \in \mN(\mG)} \mathrm{deg}(v)\right)^2}{|\mN(\mG)|} - 4|\mE(\mG)| \label{eqn:betti_geq_1_1} \\
&= \frac{4 |\mE(\mG)|^2}{|\mN(\mG)|} - 4|\mE(\mG)| = 4|\mE(\mG)|\left(\frac{|\mE(\mG)|}{|\mN(\mG)|} - 1\right) \geq 0, \label{eqn:betti_geq_1_2}
\end{align}\end{linenomath}
where \eqref{eqn:betti_geq_1_1} follows by Cauchy-Schwartz and \eqref{eqn:betti_geq_1_2} follows from \eqref{eqn:sum_of_degrees} and the assumption that $\beta_1(\mG) \geq 1$, hence $|\mE(\mG)| \geq |\mN(\mG)|$. Equality in Cauchy-Schwartz is then forced, and we conclude that the sequence of vertex degrees for $G$ must be constant---say, $\mathrm{deg}(v) = k$ for all $v \in \mN(\mG)$. Then 
\begin{linenomath}\begin{equation*}
0 = q(\mG) = \sum_{v \in \mN(\mG)} \mathrm{deg}(v)^2 - 4|\mE(\mG)| = |\mN(\mG)|k^2 - 2 |\mN(\mG)|k,
\end{equation*}\end{linenomath}
where we have once again applied \eqref{eqn:sum_of_degrees}. Solving for $k$ yields $k=2$, a contradiction. Therefore $\mG \approx \mathcal{C}$.

Now suppose that $H_\mG = H_{\mathcal{C}}$ and $\beta_1(\mG) = 0$. Then $q(\mG) = q(\mathcal{C}) = 0$ and $\mathcal{G}$ is a metric tree, so Lemma \ref{lem:metric_tree_q} implies that $\mG$ is a $Y$-graph. Since $H_\mG = H_\mathcal{C}$, $G$ and $C$ must have the same diameter---namely, $\mathrm{diam}(G) = \mathrm{diam}(C) = \frac{1}{2}$. We claim that there is no $Y$-graph with diameter $\frac{1}{2}$. Let $\mN_1(\mG) = \{u,v,w\}$ and $\mN_3(\mG) = \{x\}$. Then 
\begin{linenomath}\begin{equation*}
\frac{1}{2} = \mathrm{diam}(G) \geq d_G(u,x) + \max\{d_G(v,x),d_G(w,x)\} \geq d_G(u,x) + \frac{d_G(v,w)}{2}.
\end{equation*}\end{linenomath}
Now we use the assumption that $\mG$ has unit total length, meaning that $d_G(u,x) + d(v,x) + d(w,x) = 1$. Putting this together with the fact that $d_G(v,w) = d_G(v,x) + d_G(w,x)$ and the computation above, we have
\begin{linenomath}\begin{equation*}
1 \geq 2 d_G(u,x) + d_G(v,w) = d_G(u,x) + d_G(u,x) + d_G(v,x) + d_G(w,x) = d_G(u,x) + 1.
\end{equation*}\end{linenomath}
This implies $d_G(u,x) = 0$, which is a contradiction.
\end{proof}

\begin{proof}[Proof of Lemma \ref{lem:metric_tree_q}]
One can show by direct computation that if $\mT$ is a line graph then $q(\mT) = -2$ and that if $\mT$ is a $Y$-graph then $q(\mT) = 0$. We will next show that $q(\mT) \geq -2$ for any metric tree $\mT$. We prove this claim by induction on $|\mN(\mT)|$. The base case is $n=2$, where the only metric trees are line graphs and we are done. Suppose that the claim holds for all metric trees with $n-1$ or fewer vertices and let $\mT$ be a metric tree with $n > 2$ vertices. Choose a $v \in \mN_1(\mT)$ (i.e., a vertex of degree-1) and let $I \subset T$ be the edge with $v$ as an endpoint (i.e., $v \in \overline{I}$). Let $w$ denote the other endpoint of $I$; note that $\mathrm{deg}_T(w) \geq 3$, since $\mT$ is not a line graph (by the assumption that $n > 2$). Let $\mT'$ be the metric tree whose underlying set is $T' = T \setminus (I \cup \{v\})$, with metric renormalized so that $T'$ has unit total edge length---intuitively, $T'$ is obtained from $T$ by pruning off the edge $I$.

We have
\begin{linenomath}\begin{equation}\label{eqn:pruning_formula_0}
q(\mT) = q(\mT') + 2 \mathrm{deg}_T(w) - 4,
\end{equation}\end{linenomath}
where we use $\mathrm{deg}_T(w)$ to emphasize that the degree of $w$ is computed with respect to $T$ in this formula. Indeed, if $\mathrm{deg}_T(w) > 3$, then $w \in \mN(\mT')$ and 
\begin{equation}\label{eqn:pruning_formula}
q(\mT)  = \sum_{u \in \mN(\mT) \setminus \{v,w\}} \mathrm{deg}_T(u)^2 + \mathrm{deg}_T(v)^2 + \mathrm{deg}_T(w)^2 - 4 |\mE(\mT)|.
\end{equation}
The claim follows by observing that $|\mE(\mT)| = |\mE(\mT')| + 1$, that $\mathrm{deg}_T(v) = 1$, that $\mathrm{deg}_T(u) = \mathrm{deg}_{T'}(u)$ for all $u \in \mN(\mT') \setminus \{w\}$, and finally that $\mathrm{deg}_T(w) = \mathrm{deg}_{T'}(w) + 1$, hence
\begin{linenomath}\begin{equation*}
\mathrm{deg}_T(w)^2 = \mathrm{deg}_{T'}(w)^2 + 2\mathrm{deg}_T(w) - 1.
\end{equation*}\end{linenomath}
Plugging this into \eqref{eqn:pruning_formula} and doing some algebra yields \eqref{eqn:pruning_formula_0}. On the other hand, if $\mathrm{deg}_T(w) = 3$, so that $w \not \in \mN(\mT')$, then \eqref{eqn:pruning_formula} reduces to the desired formula more directly by observing that $|\mE(\mT)| = |\mE(\mT')| + 2$.

The new tree $\mT'$ has at most $(n-1)$-vertices, so the inductive hypothesis gives
\begin{equation}\label{eqn:q_computation_Y_line}
q(\mT) = q(\mT') + 2 \mathrm{deg}_T(w) - 4 \geq -2 + 2 \cdot 3 - 4 = 0.
\end{equation}
This proves the claim, but also gives us more. Indeed, it shows that if $\mT$ is not a line graph then $q(\mT) \geq 0$. Moreover, if $\mT$ is not a $Y$-graph, then $\mT'$ is not a line graph, so that $q(\mT') \geq 0$ and the inequality in \eqref{eqn:q_computation_Y_line} becomes strict. This proves the lemma.
 
\end{proof}

\subsection{Local Injectivity for Metric Trees}\label{sec:local_and_generic_trees}

The last of our main results considers the question of whether $L^{\mathrm{K}}_{\mathrm{h}}$ characterizes metric graphs up to isomorphism in the local sense (\local). We are able to obtain such a result when restricting to the subcategory of metric trees, where we have some extra tools at hand coming from the computational topology literature.

\begin{theorem}[Local Characterization for Metric Trees (\local)]\label{thm:main_theorem_mtrees}
For every metric tree $\mT$, there exists $\epsilon_\mT > 0$ such that if a metric tree $\mS$ satisfies $\dgh(\mT,\mS) < \epsilon_\mT$ and $L^{\mathrm{K}}_{\mathrm{h}}(\mT,\mS) = 0$ then $\mS$ is isomorphic to $\mT$.
\end{theorem}

Let $\mT$ be a metric tree and fix some point $x \in T$. Let $\partial B_{T}(x,r):=\{y \in T \mid d_T(x,y) = r\}$. Observe that if $\partial B_T(x,r)$ does not contain a vertex, then for sufficiently small $\epsilon > 0$, 
\begin{linenomath}\begin{equation*}
h_\mT(x,r + \epsilon) = h_\mT(x,r) + \epsilon \cdot |\partial B_T(x,r)|.
\end{equation*}\end{linenomath}
It follows that, for fixed $x$, $h_\mT(x,r)$ is piecewise linear with a finite collection of radii at which the slope can change; denote these radii as $r_1^x < r_2^x < \cdots < r_{M_x}^x$. These radii form a subset of $\{d_T(x,v) \mid v \in \mN(\mT)\}$.

In what follows, we borrow terminology from graph theory and refer to a vertex of degree-1 $v \in \mN_1(\mT)$ as a \emph{leaf}. We use the term \emph{leaf edge} to refer to an edge in $\mathcal{T}$ containing a leaf as an endpoint---that is, an edge $I \subset T$ with $\overline{I} \cap \mN_1(\mT) \neq \emptyset$. Let 
\begin{linenomath}\begin{equation*}
\{r_1,\ldots,r_M\} = \bigcup_{x \in \mN(\mT)} \{r^x_1,\ldots,r^x_{M_x}\},
\end{equation*}\end{linenomath}
let $\{\ell_1,\ldots,\ell_N\}$ denote the set of all lengths of leaf edges of $\mT$, and finally define 
\begin{linenomath}\begin{equation}\label{eqn:sigma_set}
\Sigma_\mT := \{\lambda_1 r_1 + \cdots + \lambda_M r_M + \xi_1 \ell_1 + \cdots + \xi_N \ell_N \mid \lambda_j, \xi_k \in \{0,1\} \}.
\end{equation}\end{linenomath}

The proof of Theorem \ref{thm:main_theorem_mtrees} hinges on the following technical lemma, based on ideas from \cite{agarwal2015computing}.

\begin{lemma}\label{lem:gromov_hausdorff_bound}
For metric trees $\mT$ and $\mS$ with $L^{\mathrm{K}}_{\mathrm{h}}(\mT,\mS) = 0$, there is a number $\delta$ in the set
\begin{linenomath}\begin{equation*}
\Delta := \left\{\frac{1}{2}|A - B| \mid A,B \in \Sigma_\mT \right\} \cup \left\{|A - B| \mid A,B \in \Sigma_\mT \right\}
\end{equation*}\end{linenomath}
such that $\frac{1}{2} d_\mathrm{GH}(T,S) \leq \delta \leq 14 d_{\mathrm{GH}}(T,S)$.
\end{lemma}

We will defer the proof of the lemma until after we have proved the theorem. Observe that the set $\Delta$ only depends on $\Sigma_\mT$---to prove the lemma we will show that $L^{\mathrm{K}}_{\mathrm{h}}(\mT,\mS) = 0$ implies $\Sigma_\mT = \Sigma_\mS$.

\begin{proof}[Proof of Theorem \ref{thm:main_theorem_mtrees}]
Let $\mT$ and $\mS$ be metric trees with $L^{\mathrm{K}}_{\mathrm{h}}(\mT,\mS) = 0$. Let $\epsilon_\mT = \frac{1}{14} \min \left(\Delta \setminus \{0\}\right)$. If $L^{\mathrm{K}}_{\mathrm{h}}(\mT,\mS) = 0$ and $\dgh(\mT,\mS) < \epsilon_\mT$, then Lemma \ref{lem:gromov_hausdorff_bound} implies that there is some $\delta \in \Delta$ such that
\begin{linenomath}\begin{equation*}
\delta \leq 14\, \dgh(\mT,\mS) < \min \left(\Delta \setminus \{0\}\right).
\end{equation*}\end{linenomath}
This forces $\delta = 0$. Applying the lower bound of Lemma \ref{lem:gromov_hausdorff_bound}, we conclude that $\dgh(\mT,\mS) = 0$. This implies that there exists an isometry $\phi:T \to S$ and we claim that this isometry must be measure-preserving. Indeed, the measures $\mu_T$ and $\mu_S$ each coincide with 1-dimensional Hausdorff measure (this is because Lebesgue measure on edges coincides with 1-dimensional Hausdorff measure---see \cite[Section 1.7]{burago2001course}), so the claim follows by \cite[Proposition 1.7.8]{burago2001course}. Therefore $\mT \approx \mS$.
\end{proof}

\subsubsection*{Proof of Lemma \ref{lem:gromov_hausdorff_bound}}

The proof uses the following necessary condition for vanishing $L^{\mathrm{K}}_{\mathrm{h}}$.

\begin{proposition}\label{prop:vertex_sets_equal}
Suppose that $\mG$ and $\mH$ are metric graphs with $L^{\mathrm{K}}_{\mathrm{h}}(\mG,\mH) = 0$. Then for every $v \in \mN(\mG)$ there exists $w \in \mN(\mH)$ such that $h_\mG(v,r) = h_\mH(w,r)$ for all $r \geq 0$. By symmetry, for every $w \in \mN(\mH)$ there exists $v \in \mN(\mG)$ such that $h_\mH(w,\cdot) = h_\mG(v,\cdot)$. It follows that the sets of functions $\{h_\mG(v,\cdot) \mid v \in \mN(\mG)\}$ and $\{h_\mH(w,\cdot) \mid w \in \mN(\mH)\}$ are equal.
\end{proposition}

\begin{remark}
As a technical point, note that there may be distinct vertices $v,v' \in \mN(\mG)$ such that $h_\mG(v,\cdot) = h_\mG(v',\cdot)$. The proposition is only keeping track of the sets of unique functions (i.e., without multiplicity).
\end{remark}

\begin{proof}
Suppose there is some $v \in \mN(\mG)$ such that $h_\mG(v,\cdot) \neq h_\mH(w,\cdot)$ for all $w \in \mN(\mH)$. If this is the case, then $h_\mG(v,\cdot) \neq h_\mH(y,\cdot)$ for any point $y$ in $H$, as $h_\mG(v,r)$ will differ from any $h_\mH(y,r)$ corresponding to a non-vertex $y$ for small values of $r$---see Lemma \ref{lem:degree_formula_for_local_distribution}. It follows that the continuous function $H \rightarrow \R_{\geq 0}$ on the compact space $H$ defined by $y \mapsto c_{\mG,\mH}(v,y)$, where $c_{\mG,\mH}$ is the cost function appearing in the definition of $L^{\mathrm{K}}_{\mathrm{h}}$ (see \eqref{eqn:local_distribution_cost_function}), achieves its minimum $m_v > 0$. Let $U$ denote the open set $\{x \in G \mid c_{\mG,\mG}(v,x) < m_v/2\}$. We claim that for any $x \in U$, 
\begin{linenomath}\begin{equation*}
\min_{y \in H} c_{\mG,\mH}(x,y) \geq \frac{m_v}{2}.
\end{equation*}\end{linenomath}
If this is not the case then there is  $x \in G$ such that $c_{\mG,\mG}(v,x) < m_v/2$ and $c_{\mG,\mH}(x,y) < m_v/2$ for some $y \in H$. It is not hard to see that the cost function satisfies a triangle inequality-like relation for all $v,x \in G$ and $y \in H$:
\begin{linenomath}\begin{equation*}
c_{\mG,\mH}(v,y) \leq c_{\mG,\mG}(v,x) + c_{\mG,\mH}(x,y).
\end{equation*}\end{linenomath}
Applying this to our particular points $v,x$ and $y$, it follows that 
\begin{linenomath}\begin{equation*}
c_{\mG,\mH}(v,y) \leq c_{\mG,\mG}(v,x) + c_{\mG,\mH}(x,y) < m_v,
\end{equation*}\end{linenomath}
which is a contradiction. 

Now we observe that for any coupling $\mu \in \mathcal{U}(\mu_G,\mu_H)$,
\begin{linenomath}\begin{equation*}
\int_{G\times H} c_{\mG,\mH}(x,y) \,\mu (dx \times dy) \geq \int_{U \times H} c_{\mG,\mH}(x,y) \, \mu(dx \times dy) \geq \frac{m_v}{2} \cdot \mu(U \times H) =  \frac{m_v}{2} \mu_G(U).
\end{equation*}\end{linenomath}
It follows that $L^{\mathrm{K}}_{\mathrm{h}}(\mG,\mH) > 0$. 
\end{proof}

Let $\mT$ be a metric tree, fix $x \in T$ and let $\{r^x_1,\ldots,r^x_{M_x}\}$ be defined as above. Observe that for $r \not \in \{r^x_1,\ldots,r^x_{M_x}\}$ (so that the slope of $h_\mT(x,\cdot)$ does not change at $r$), $\partial B_T(x,r)$ can only contain a vertex if it, in particular, contains a leaf. Indeed, as $\partial B_T(x,r)$ passes over a leaf vertex, it is possible for the slope of $h_\mT(x,r)$ to decrease, but as $\partial B_T(x,r)$ passes over any vertex of degree $\geq 3$, the slope of $h_\mT(x,r)$ increases. It follows that if $\partial B_T(x,r)$ contains only vertices of degree $\geq 3$, then the slope of $h_\mT(x,r+\epsilon)$ is strictly greater than that of $h_\mT(x,r)$ for all $\epsilon > 0$; this is a contradiction.

The discussion of the previous paragraph allows us to distinguish a finite list of candidate values $r$ where $\partial B_T(x,r)$ can contain a vertex, for any $x \in \mathcal{V}(T)$, as we show in the next lemma.

\begin{lemma}\label{lem:vertex_radii}
Let $\mT$ be a metric tree, let $x \in \mathcal{V}(T)$ and let $r_1^x,\ldots,r^x_{M_x}$ be as defined above. Let $\ell_1,\ldots,\ell_N$ denote the lengths of all leaf edges of $\mT$. Then $\partial B_T(x,r)$ can only contain a vertex if $r$ lies in the set
\begin{equation}\label{eqn:radii_containing_vertices}
\Sigma_\mT(x) := \{\lambda_1 r^x_1 + \cdots + \lambda_M r^x_{M_x} + \xi_1 \ell_1 + \cdots + \xi_N \ell_N \mid \lambda_j, \xi_k \in \{0,1\},\mbox{ at most one $\lambda_j \neq 0$}\}.
\end{equation}
\end{lemma}

\begin{proof}
Suppose that $\partial B_T(x,r)$ contains a vertex and that $r \not \in \{r^x_1,\ldots,r^x_M\}$. By the above discussion, $\partial B_T(x,r)$ must contain a leaf $v_1$. Let $J_1$ denote the unique path joining $x$ to $v_1$ (i.e., realizing the distance $d_T(x,v_1)$ from \ref{eqn:geodesic_distance_formula}---such paths are unique because of the tree topology of $T$). Let $x_1$ denote the vertex lying on $J_1$ which immediately precedes $v_1$, let $s_1=d_T(x,x_1)$ and let $\ell_{j_1} = d_T(x_1,v_1)$. Since $v_1$ is a leaf, $\ell_{j_1}$ is the length of its leaf edge. There are several cases to consider. If $x_1 = x$, then we are clearly finished, as $r = \ell_{j_1}$. If $s_1 = r^x_{j_1} \in \{r^x_1,\ldots,r^x_{M_x}\}$, then we are finished because this implies $r=r^x_{j_1}+\ell_{j_1} \in \Sigma_\mT(x)$. We therefore assume that we are in neither of these situations and iterate the process. That is, we have a vertex $x_1 \in \partial B_T(x,s_1)$, where $s_1 \not \in \{r^x_1,\ldots,r^x_{M_x}\}$, and it follows that $\partial B_T(x,s_1)$ contains a leaf $v_2$. Let $J_2$ denote the path from $x$ to $v_2$, let $x_2$ denote the vertex on $J_2$ preceding $v_2$, let $s_2 = d_T(x,x_2)$ and let $\ell_{j_2} = d_T(x_2,v_2)$. The algorithm terminates at this stage if $x_2=x$ (in which case $r = \ell_2 + \ell_1$) or if $s_2 = r^x_{j_2} \in \{r^x_1,\ldots,r^x_{M_x}\}$ (in which case $r = r^x_{j_2} + \ell_{j_2} + \ell_{j_1}$) and otherwise iterates again. The algorithm must eventually terminate, since the distances $s_j$ in each step are strictly decreasing.
\end{proof}

 Note that the set $\Sigma_\mT$ from \eqref{eqn:sigma_set}  contains the union of all $\Sigma_\mT(x)$ for $x \in \mN(\mT)$. In turn, $\Sigma_\mT$ contains the set of distances $\{d_T(v,w) \mid v,w \in \mN(\mT)\}$. In general, these containments are strict.

\begin{lemma}\label{lem:sigma_sets_equal}
Let $\mT$ and $\mS$ be metric trees with $L^{\mathrm{K}}_{\mathrm{h}}(\mT,\mS) = 0$. Then $\Sigma_\mT = \Sigma_\mS$.
\end{lemma}

\begin{proof}
We claim that the set of functions $\{h_\mT(v,\cdot) \mid v \in \mN(\mT)\}$ determines the set $\Sigma_\mT$---the result then follows by Proposition \ref{prop:vertex_sets_equal}. To prove the claim, observe that the set $\{h_\mT(v,\cdot) \mid v \in \mN(\mT)\}$ allows us to extract the radii $r_1^x,\ldots,r_{M_x}^x$ from Lemma \ref{lem:vertex_radii} for each $x \in \mN(\mT)$. We can therefore determine the set $\{r_1,\ldots,r_M\}$. Moreover, we can also extract the set of lengths $\{\ell_1,\ldots,\ell_N\}$ of all leaf edges of $\mT$. Indeed, a function $h_\mT(x,\cdot)$ is the function of a leaf if and only if $h_\mT(x,r) = r$ for all sufficiently small $r \geq 0$. Next, use the fact that the length of the leaf edge of a leaf $x$ is the first radius where the slope of $h_\mT(x,r)$ is not equal to one; that is, it is given by
\begin{linenomath}\begin{equation*}
\sup \{r_0 \geq 0 \mid h_\mT(x,r) = r \mbox{ for all $r < r_0$}\}.
\end{equation*}\end{linenomath}
\end{proof}

\begin{proof}[Proof of Lemma \ref{lem:gromov_hausdorff_bound}]
The paper \cite{agarwal2015computing} defines a distance on the space of metric trees, denoted for the course of this proof as $D(\mT,\mS)$, which is based on the notion of \emph{interleaving distance between merge trees} \cite{morozov2013interleaving}. We omit the details of their construction, but note that they prove $\frac{1}{2} d_\mathrm{GH}(T,S) \leq D(\mT,\mS) \leq 14 d_{\mathrm{GH}}(T,S)$---this is a combination of Lemmas 4.1 and 4.2 from \cite{agarwal2015computing}.

Moreover, Lemma 5.1 of \cite{agarwal2015computing} shows that $D(\mT,\mS)$ always lies in a set whose values are predetermined by the structures of $\mT$ and $\mS$, as we now describe. 
For $t \in \mN(\mT)$, let
\begin{linenomath}\begin{equation*}
\Lambda_{\mT}(t) := \left\{\frac{1}{2} \left|d_T(u,t) - d_T(v,t)\right| \mid u,v \in \mN(\mT)\right\}
\end{equation*}\end{linenomath}
Lemma \ref{lem:vertex_radii} implies that for any vertex $u \in \mN(\mT)$, the value $d_T(u,t)$ lies in the set $\Sigma_\mT(t)$. It follows that 
\begin{linenomath}\begin{equation*}
\Lambda_{\mT}(t) \subset \left\{\frac{1}{2}\left|A-A'\right| \mid A, A' \in \Sigma_\mT(t) \right\} \subset \left\{\frac{1}{2}\left|A-A'\right| \mid A, A' \in \Sigma_\mT \right\}
\end{equation*}\end{linenomath}
For $s \in \mN(\mS)$, we define the sets
\begin{align*}
    \Lambda_{\mS}(s) &:= \left\{\frac{1}{2} \left|d_S(u,s) - d_S(v,s)\right| \mid u,v \in \mN(\mS)\right\} \\
    \Lambda_{\mT,\mS}(t,s) &:= \left\{\left|d_T(u,t) - d_T(v,s)\right| \mid u \in \mN(\mT), \, v \in \mN(\mS)\right\}.
\end{align*}
By Lemma \ref{lem:sigma_sets_equal}, we have $\Sigma_\mT = \Sigma_\mS$, so that
\begin{linenomath}\begin{align*}
\Lambda_{\mS}(s) &\subset \left\{\frac{1}{2}\left|B-B'\right| \mid B, B' \in \Sigma_\mS(s) \right\} \subset \left\{\frac{1}{2}\left|B-B'\right| \mid B, B' \in \Sigma_\mT \right\}, \\
\Lambda_{\mT,\mS}(t,s) &\subset \left\{\left|A-B\right| \mid A \in \Sigma_\mT(t), \, B \in \Sigma_{\mS}(s) \right\} \subset \left\{\left|A-B\right| \mid A,B \in \Sigma_\mT \right\}
\end{align*}\end{linenomath}
Lemma 5.1 of \cite{agarwal2015computing} says that
\begin{linenomath}\begin{equation*}
D(\mT,\mS) \in \Lambda_{\mT}(t) \cup \Lambda_{\mS}(s) \cup \Lambda_{\mT,\mS}(t,s)
\end{equation*}\end{linenomath}
for some $t \in \mN(\mT)$ and $s \in \mN(\mS)$. The lemma then follows from the work above, as $\Lambda_{\mT}(t) \cup \Lambda_{\mS}(s) \cup \Lambda_{\mT,\mS}(t,s) \subset \Delta$ for all $t,s$. In particular, we take $\delta = D(\mT,\mS)$.
\end{proof}

\section{Discussion}\label{sec:discussion}

In this paper we formalized several inverse problems on recovering mm-spaces within a fixed subcategory from invariants defined in terms of distributions of distances: (\globalInj) on global injectivity of the invariants, (\homotopy) on the ability of the invariants to detect homotopy type, (\spheres) on the ability of the invariants to distinguish spheres, and (\local) on local injectivity of the invariants. We solved, at least in part, several of these problems in categories of plane curves, embedded manifolds, Riemannian manifolds, metric graphs and metric trees. Our results are summarized in Table \ref{tab:summary}. In each category, for each inverse problem, we mark \cmark if positive progress has been made and \xmark \ if we have a counterexample and, in each case, refer to the relevant result. Of course, these questions are not completely independent---e.g., global injectivity implies local injectivity---and this is reflected in the Table. We mark with $\circ$ if the problem is completely open in the category. The table should be read with the following disclaimer in mind: even if an entry is marked with \cmark or \xmark, this does not mean that the problem has been completely solved in full generality (far from it, in some cases). Future work will be to fill in the remaining gaps in Table \ref{tab:summary} and to extend these results to new categories of mm-spaces. Starting points for obtaining results in these directions are interspersed throughout Sections \ref{sec:plane_curves} and \ref{sec:metric_trees} as \emph{Questions}.

\begin{table}
    \centering
    \caption{Summary of inverse problem  results.}
    \label{tab:summary}
    \begin{tabular}{lccccccc}
    \toprule
    \multirow{2}{*}{Class of Spaces} & 
    \multicolumn{3}{c}{Local Distribution} &
    \multicolumn{4}{c}{Global Distribution} \\
    \cmidrule(r){2-4} \cmidrule(r){5-8}
    {} & (\globalInj) & (\homotopy) & (\local) & (\globalInj) & (\homotopy) & (\spheres) & (\local) \\
    \midrule
    Plane Curves & \cmark & n/a & \cmark & \xmark & n/a & \cmark & \xmark \\
    {} & Prop \ref{prop:local_distribution_curves} & -- & Prop \ref{prop:local_distribution_curves} & Prop \ref{prop:curve_histogram_conjecture_false} & -- & Thm \ref{thm:sphere_distributions}  & Exmp \ref{exmp:global_distribution_circle_densities} \\
    Emb. Manifolds & $\circ$ & $\circ$ & $\circ$ & \xmark & $\circ$ & \cmark & $\circ$ \\
    {} & -- & -- & -- & Exmp \ref{exmp:surfaces_same_distribution} & -- & Thm \ref{thm:sphere_distributions} & -- \\
    Riem. Manifolds & \cmark & \cmark & \cmark & \xmark & \cmark &  \cmark & $\circ$ \\
    {} & Prop \ref{prop:local_dist_riem_surf} & Thm \ref{thm:surface_characterization} & Prop \ref{prop:local_dist_riem_surf} & Exmp \ref{exmp:flat_tori} & Thm \ref{thm:surface_characterization} & Prop \ref{prop:sphere_distributions_riemannian} & -- \\
    Metric Graphs & \cmark & \cmark & \cmark & \xmark & $\circ$ & \cmark & \xmark \\
    {} & Thm \ref{thm:continuous_maps_graphs} & Thm \ref{thm:homotopy_type_metric_graphs} & Thm \ref{thm:continuous_maps_graphs} & Exmp \ref{exmp:non_injectivity_local_dist_c_trees} & -- & Thm \ref{thm:sphere_characterization_metric_graphs} & Exmp \ref{exmp:global_distribution_circle_densities}  \\
    Metric Trees & \xmark & n/a & \cmark & \xmark & n/a & n/a & $\circ$ \\
    {} & Exmp \ref{exmp:non_injectivity_local_dist_c_trees} & -- & Thm \ref{thm:main_theorem_mtrees} & Exmp \ref{exmp:non_injectivity_local_dist_c_trees} & -- & -- & -- \\
    \bottomrule
    \end{tabular}
\end{table}

\subsubsection*{Acknowledgements}

We acknowledge funding from these sources: NSF CCF 1526513, NSF DMS 1723003, NSF CCF 1740761, NSF DMS 1547357. We also thank Zane Smith and Peter Olver for useful conversations on some of the counterexamples presented in the paper.

\printbibliography

\end{document}